\newtheorem{thm}{Theorem}[section]
\newtheorem{lem}{Lemma}[section]
\journal{Unknown}
\begin{document}
\begin{frontmatter}
\title{Compressive Sampling of Polynomial Chaos Expansions: Convergence Analysis and Sampling Strategies}
\author{Jerrad Hampton}
\author{Alireza Doostan\corref{cor1}}
\ead{alireza.doostan@colorado.edu}
\cortext[cor1]{Corresponding Author: Alireza Doostan}

\address{Aerospace Engineering Sciences Department, University of Colorado, Boulder, CO 80309, USA}

\begin{abstract}
Sampling orthogonal polynomial bases via Monte Carlo is of interest for uncertainty quantification of models with high-dimensional random inputs, using Polynomial Chaos (PC) expansions. It is known that bounding a probabilistic parameter, referred to as {\it coherence}, yields a bound on the number of samples necessary to identify coefficients in a sparse PC expansion via solution to an $\ell_1$-minimization problem. {\color{black}Utilizing results} for orthogonal polynomials, we bound the coherence parameter for polynomials of Hermite and Legendre type under {\color{black}their} respective natural sampling distribution. In both polynomial bases we identify an importance sampling distribution which yields a bound with weaker dependence on the order of the approximation. For more general orthonormal bases, we propose the {\it coherence-optimal} sampling: a Markov Chain Monte Carlo sampling, which directly uses the basis functions under consideration to achieve a statistical optimality among all sampling schemes with identical support. We demonstrate these different sampling strategies numerically in both high-order and high-dimensional, manufactured PC expansions.  In addition, the quality of each sampling method is compared in the identification of solutions to two differential equations, one with a high-dimensional random input and the other with a high-order PC expansion. In both cases the coherence-optimal sampling scheme leads to similar or considerably improved accuracy.

\end{abstract}
\begin{keyword}
Compressive Sampling \sep Polynomial Chaos \sep Sparse Approximation \sep $\ell_1$-minimization \sep Markov Chain Monte Carlo \sep Hermite Polynomials \sep Legendre Polynomials \sep Stochastic PDEs \sep Uncertainty Quantification
\end{keyword}
\end{frontmatter}

\section{Introduction}
\label{sec:intro}

A precise approach to analyzing modern, sophisticated engineering systems requires understanding how various Quantities of Interest (QoI) behave as functions of uncertain system inputs. An ineffective understanding may give unfounded confidence in the QoI or suggest unnecessary restrictions in the system inputs due to unnecessary incredulity concerning the QoI. This process of Uncertainty Quantification (UQ) has received much recent study~\cite{Ghanem91a,LeMaitre10,Xiu10a}.

Probability is a natural framework for modeling uncertain inputs by assuming the input depends on a $d$-dimensional random vector $\bm{\Xi}:=(\Xi_1,\cdots,\Xi_d)$ with some joint probability density function $f(\bm{\xi})$. In this manner we model the scalar QoI, denoted by $u(\bm{\Xi})$, as an unknown function of the input, which we seek to approximate. In this work we approximate $u(\bm{\Xi})$, assumed to have a finite variance, using an expansion in multivariate orthogonal polynomials, each of which we denote by $\psi_k(\bm{\Xi})$, yielding a Polynomial Chaos (PC) expansion~\cite{Ghanem91a,Xiu02},
\begin{align}
\label{Eq:PCEDef}
u(\bm{\Xi}) &= \mathop{\sum}\limits_{k=0}^\infty c_k \psi_{k}(\bm{\Xi}),\\
\nonumber
&\approx \mathop{\sum}\limits_{k\in\mathcal{C}} c_k \psi_{k}(\bm{\Xi}).
\end{align}
Under conditions discussed in Section~\ref{subsec:PCE}, the index set $\mathcal{C}$ may have few elements, allowing us to accurately reconstruct $u$ from a relatively small number of basis polynomials, i.e., there exists a {\it sparse} representation for $u$ as a linear combination of orthogonal polynomials in $\bm{\Xi}$. For computation we truncate the expansion in (\ref{Eq:PCEDef}) so that we have $\bm{c}=(c_1,\cdots,c_P)^T$ and
\begin{align}
\label{Eq:PCETrunc}
u(\bm{\Xi}) &\approx \mathop{\sum}\limits_{k=1}^P c_k \psi_{k}(\bm{\Xi}),
\end{align}
where the error introduced by this truncation to a finite number of terms is referred to as {\it truncation error}. The polynomials $\psi_{k}(\bm{\Xi})$ are naturally selected to be orthogonal with respect to the measure $f(\bm\xi)$ of the inputs $\bm\Xi$, \cite{Xiu02,Soize05}. For instance, when $\bm\Xi$ follows a jointly uniform or Gaussian distribution (with independent components), $\psi_{k}(\bm{\Xi})$ are multivariate Legendre or Hermite polynomials, respectively. For the interest of analysis, we assume that $\psi_{k}(\bm{\Xi})$ are normalized such that $\mathbb{E}[\psi^2_{k}(\bm{\Xi})]=1$, where $\mathbb{E}$ denotes the mathematical expectation operator. If we can accurately identify the coefficients $c_k = \mathbb{E}[u(\bm\Xi) \psi_{k}(\bm\Xi)]$ for our approximation, then as $P\rightarrow\infty$ there is the mean-squares convergence of our PC approximation to $u$. 

To identify $\bm{c}$ we consider non-intrusive, i.e., sampling-based, methods where we do not require changes to deterministic solvers for $u$ as we generate realizations of $\bm{\Xi}$ to identify $u(\bm{\Xi})$. We denote these realizations $\bm{\xi}^{(i)}$ and $u(\bm{\xi}^{(i)})$, respectively. We let $i=1:N$ so that $N$ is the number of independent samples considered, and define
\begin{align}
\label{eqn:psi_u}
\bm{u}&:=(u(\bm{\xi}^{(1)}),\cdots,u(\bm{\xi}^{(N)}))^T;\\
\bm{\Psi}(i,j)&:=\psi_{j}(\bm{\xi}^{(i)}).\nonumber
\end{align}
These definitions imply the matrix equality $\bm{\Psi}\bm{c}=\bm{u}$. We also introduce a diagonal positive-definite matrix $\bm{W}$ such that $\bm{W}(i,i)$ is a function of $\bm{\xi}^{(i)}$ that depends on our sampling strategy and is described in Sections~\ref{sec:motivation} and~\ref{sec:sampling}. To approximate $\bm{c}$ we use Basis Pursuit Denoising (BPDN), \cite{Chen98,Chen01,Donoho06b,Bruckstein09}. This involves solving either the $\ell_1$-minimization problem
\begin{align}
\label{eqn:constrained}
\mathop{\arg\min}_{\bm{c}}\|\bm{c}\|_1 \mbox{ subject to } \|\bm{W}\bm{u}-\bm{W}\bm{\Psi}\bm{c}\|_2\le\delta,
\end{align}
where $\delta$ is a tolerance of solution inaccuracy due to the truncation error, or the closely related
\begin{align}
\label{eqn:regularized}
\mathop{\arg\min}_{\bm{c}}\frac{1}{2}\|\bm{W}\bm{u}-\bm{W}\bm{\Psi}\bm{c}\|_2^2 + \lambda\|\bm{c}\|_1,
\end{align} 
where $\lambda$ is a regularization parameter. 

The solution to these problems are closely related to the solution of either
\begin{align*}
\mathop{\arg\min}_{\bm{c}}\|\bm{c}\|_0 \mbox{ subject to } \|\bm{W}\bm{u}-\bm{W}\bm{\Psi}\bm{c}\|_2\le\delta,
\end{align*}
which is similar to (\ref{eqn:constrained}), or the closely related
\begin{align*}
\mathop{\arg\min}_{\bm{c}}\frac{1}{2}\|\bm{W}\bm{u}-\bm{W}\bm{\Psi}\bm{c}\|_2^2 + \lambda\|\bm{c}\|_0,
\end{align*}
which is similar to (\ref{eqn:regularized}). Here, $\|\bm{c}\|_0=\#(c_k\ne 0)$ is the number of non-zero entries of $\bm c$. Solutions to these problems are of great practical interest for sparse approximation and have received significant study in the field of Compressive Sampling/Compressed Sensing, see, e.g.,~\cite{Candes06a,Donoho06b,Elad10a,Eldar12a}, and more recently in UQ, \cite{Doostan10b,Doostan11a,Blatman11,Mathelin12a,Yan12,Yang13,Karagiannis14,Peng14,Schiavazzi14,Sargsyan14,Jones14a}.

\subsection{Contributions of This Work}
\label{subsec:Contribution}

This work is concerned with convergence analysis and sampling strategies to recover a sparse stochastic function in both Hermite and Legendre PC expansions from $\ell_1$-minimization problem (\ref{eqn:constrained}). As an extension of our previous work in \cite{Doostan10b,Doostan11a,Peng14}, the main contributions of this study are three-fold.

Firstly, we {\color{black}utilize properties} of these polynomials, in conjunction with the analysis of sparse function recovery in~\cite{CandesPlan,RauhutWard}, to give a framework which admits a bound on the number of samples sufficient for a successful solution of (\ref{eqn:constrained}). To our best knowledge, the Hermite results are the first of their type, and the Legendre recovery bounds, while here obtained from different techniques, are similar to those in~\cite{RauhutWard}.

Secondly, we provide a contribution of particular practical interest in that we analyze sampling Hermite polynomials uniformly over a $d$-dimensional ball -- with a radius depending on the order of approximation -- instead of sampling from the standard Gaussian measure. {\color{black}This sampling arises in a similar context to the Chebyshev distribution as a sampling for Legendre polynomials.} Interestingly, as explained in Section~\ref{subsubsec:asymmethod}, this sampling of Hermite polynomial expansion is analogous to {\it Hermite function} expansion,~\cite{Szego}, of appropriately weighted solution of interest. We provide analytic and numeric results justifying the use of this {\it importance sampling} distribution for the recovery of sparse Hermite PC expansions.

Finally, we analytically identify an importance sampling distribution with a statistical {\it optimality}, in terms of the {\it coherence} of the PC basis as a key recovery parameter of the method, and identify a Markov Chain Monte Carlo sampler for which we provide associated numeric results. This approach, here referred to as {\it coherence-optimal} sampling, provides a general sampling scheme for the reconstruction of sparse Hermite and Legendre PC expansions, and may be extended to other types of orthogonal bases. 

The motivation to design a sampling strategy based on the coherence is similar to that of~\cite{RauhutWard,Krahmer13}, but utilizing a different pre-conditioning from~\cite{Krahmer13}, considering unbounded bases and asymptotic scenarios, and providing a procedure for generating samples.

The presentation in this work has Section~\ref{sec:ProblemAndSolution} clearly stating the problem. Section~\ref{sec:motivation} provides key background information and motivates our approach, while Section~\ref{sec:sampling} describes our sampling methods and provides key theoretical results. Section~\ref{sec:examples} demonstrates the performance of the sampling methods and Section~\ref{sec:Proofs} presents the proofs to the Theorems from Section~\ref{sec:sampling}.

\section{Problem Statement and Solution Approach}
\label{sec:ProblemAndSolution}

We first describe the random inputs to the system, letting the random vector $\bm{\Xi}$, defined on the probability space $(\Omega,\mathcal{F},\mathbb{P})$, represent the input uncertainties to the physical problem under consideration. We assume that $(\Omega,\mathcal{F},\mathbb{P})$ is formed by the product of $d$ probability spaces $(\mathbb{R},\mathbb{B}(\mathbb{R}),\mathbb{P}_i)$ associated with each $\Xi_i$ where $\mathbb{B}$ denotes the Borel $\sigma$-algebra.  We note that this implies that $\mathcal{F}=\mathbb{B}(\mathbb{R}^d)$ the $d$-dimensional $\sigma$-algebra, and $\Omega=\mathbb{R}^d$. Further implied are that $\mathbb{P}$ is Lebesgue measurable and the $\Xi_i$ are independent random variables. For convenience, we assume that the $\Xi_i$ are identically distributed with distribution function $f(\xi)$, and abuse this notation by allowing that $\bm{\Xi}$ is distributed according to $f(\bm{\xi})$, noting that the two distributions may be differentiated by the presence of a scalar or vector function argument.

We consider the physical system through which the input uncertainty $\bm{\Xi}$ propagates to be given by operators defined on a bounded Lipschitz continuous domain $\mathcal{D}\subset\mathbb{R}^D$ for $D\in\{1,2,3\}$, with a boundary denoted by $\partial\mathcal{D}$. Letting operators $\mathcal{L},\mathcal{B}$ and $\mathcal{I}$ depend on the physics of the problem being considered, we assume that a solution $u$ satisfies
\begin{align*}
\mathcal{L}(\bm{x},t,\bm{\Xi};u(t,\bm{x},\bm{\Xi}))=0 &\qquad\bm{x}\in\mathcal{D},\\
\mathcal{B}(\bm{x},t,\bm{\Xi};u(t,\bm{x},\bm{\Xi}))=0 &\qquad\bm{x}\in\partial\mathcal{D},\\
\mathcal{I}(\bm{x},0,\bm{\Xi};u(0,\bm{x},\bm{\Xi}))=0 &\qquad\bm{x}\in\mathcal{D}.
\end{align*}
We note that the problems considered in Section~\ref{sec:examples} depend only on space or time, but the methods considered here are independent of the underlying physical problem.
We assume that conditioned on the $i$th independent sample of $\bm{\Xi}$, denoted by $\bm{\xi}^{(i)}$, a numerical solution to this problem may be identified by a fixed solver; we utilize FEniCS~\cite{FEniCS} for the examples in the present work. For any fixed $\bm{x}_0\in\mathcal{D}$ and $t_0> 0$, our objective is to reconstruct $u(\bm{x}_0,t_0,\bm{\Xi})$ via problem (\ref{eqn:constrained}) using information obtained from $\{u(\bm{x}_0,t_0,\bm{\xi}^{(i)})\}_{i=1}^N$, that is $N$ independent realizations of the QoI. For a cleaner presentation, we suppress the dependence of $u$ on $\bm{x}_0$ and $t_0$. 

\subsection{Approximately Sparse PCE}
\label{subsec:PCE}

Here we discuss the polynomials in (\ref{Eq:PCETrunc}) as utilized in this work, and the key sparsity assumption which this approximation frequently facilitates. We consider an arbitrary number of input dimensions, denoted by $d$, and the set of orthogonal polynomials in any mixture of these coordinates of total order less than or equal to $p$. To explain the total order, let {\color{black}$\bm{k} = (k_1,\dots,k_d)$} be a $d\times 1$ multi-index such that $k_i\in{\color{black}\mathbb{N}\cup\{0\}}$ represents the order of the polynomial $\psi_{k_i}(\Xi_i)$, orthogonal with respect to the measure of $\Xi_i$. The $d$-dimensional polynomials $\psi_{\bm{k}}(\bm{\Xi})$ are constructed by the tensorization of {\color{black}$\psi_{k_i}(\Xi_i)$},
\begin{align*}
\psi_{\bm{k}}(\bm{\Xi})=\mathop{\prod}\limits_{i=1}^d\psi_{k_i}(\Xi_i).
\end{align*}
The total order of $p$ implies that we consider all polynomials satisfying
\begin{align*}
\|\bm{k}\|_1\le p\qquad k_i\in{\color{black}\mathbb{N}\cup\{0\}}\quad\forall i.
\end{align*}
We note that a direct combinatorial count implies that a $d$-dimensional approximation of total order $p$ has $P={p+d\choose d}$ basis polynomials. This total order basis facilitates a polynomial approximation to the general function that favors lower order polynomials. If the coefficients have a sufficiently rapid decay or if certain dimensions are dominant in an accurate reconstruction, then we have
\begin{align*}
u(\bm{\Xi})\approx \mathop{\sum}\limits_{\bm{k}\in\mathcal{C}} c_{\bm{k}} \psi_{\bm{k}}(\bm{\Xi}),
\end{align*}
where $s:=|\mathcal{C}| \ll P$ is the operative sparsity of the approximation that leads to stable and convergent approximations of $\bm{c}$ from (\ref{eqn:constrained}), using $N<P$ random samples of $u(\bm\Xi)$. To demonstrate this, we rely primarily on existing theorems from~\cite{CandesPlan} as presented in Section~\ref{sec:motivation}. Subsequently, in Section \ref{sec:sampling}, we adapt these results to the case of sparse PC expansions for which we utilize basic properties of orthogonal polynomials, and further present our main results on the choices of random sampling of $\bm\Xi$ and, hence, $u(\bm\Xi)$.\\
 
\noindent{\bf Notation:}  In the sequel we occasionally use a multi-index notation for polynomials, but also find it convenience to index polynomials by a scalar, e.g., $k$, from $1$ to $P$.

\section{\texorpdfstring{Definitions and Background}{Definitions and Background}}
\label{sec:motivation}

To contextualize the results from~\cite{CandesPlan}, presented in Section~\ref{sec:candes_theorems}, we first introduce two main definitions that are used both in these results, as well as in constructing our sampling methods.

\subsection{\texorpdfstring{Sampling Definitions}{Sampling Definitions}}
\label{subsubsec:sampling}
We first consider the set of polynomials, $\{\psi_k(\bm{\xi})\}_{k=1}^{P}$, as defined in Section~\ref{sec:ProblemAndSolution} and define $B(\bm{\xi})$ to be
\begin{align}
\label{eqn:btspec}
B(\bm{\xi}):=\mathop{\max}\limits_{k=1:P}|\psi_k(\bm{\xi})|.
\end{align}
This represents a uniformly least upper bound on the basis polynomials of interest. In addition, we consider 
\begin{align}
\label{eqn:GDef}
G(\bm{\xi})\ge B(\bm{\xi}) \qquad \forall\bm{\xi}\in\Omega.
\end{align}
Here, $G(\bm{\xi})$ represents an upper bound on the tight bound of $B(\bm{\xi})$ for all $\bm{\xi}\in\Omega$, where $\Omega$ is the sample space of potential values for $\bm{\xi}$ as a realization of $\bm{\Xi}$.

We note that for several orthonormal polynomials of interest a bound on $B(\bm{\xi})$ may be attained,~\cite{RauhutWard,Szego,Hermite1,AskeyWainger,LagAsym,Jacobi}. 
In this case, we have that $\psi_k(\bm{\xi})/G(\bm{\xi})\le 1$. It follows that for any set $\mathcal{S}\subseteq\Omega$,
\begin{align}
\label{eqn:normalizingconstant}
c = \left(\int_{\mathcal{S}}f(\bm{\xi})G^2(\bm{\xi})d\bm{\xi}\right)^{-1/2}
\end{align}
is such that
\begin{align*}
c^2\int_{\mathcal{S}}f(\bm{\xi})G^2(\bm{\xi})d\bm{\xi}=1,
\end{align*}
and 
\begin{align}
\label{eqn:optimal_pdf_gen}
f_{\bm{Y}}(\bm{\xi}):=c^2f(\bm{\xi})G^2(\bm{\xi}),
\end{align}
is a probability distribution supported on $\mathcal{S}$, which we consider as the distribution for $\bm{Y}$. Let $\delta_{i,j}$ denote the Kronecker delta such that $\delta_{i,j}=1$ if $i=j$ and $0$ if $i\ne j$. Note that for $i,j=1:P$,
\begin{align}
\label{Eqn:TransformIntegral}
\left|\int_{\mathcal{S}}\frac{\psi_i(\bm{\xi})}{cG(\bm{\xi})}\frac{\psi_j(\bm{\xi})}{cG(\bm{\xi})}c^2f(\bm{\xi})G^2(\bm{\xi})d\bm{\xi}-\delta_{i,j}\right|&\le \epsilon_{i,j},
\end{align}
and we may select $\mathcal{S}$ such that $\epsilon_{i,j}$ may be made as small as needed, e.g., if we take $\mathcal{S} = \Omega$, then $\epsilon_{i,j}=0$. For this purpose we employ the heuristic of selecting $\mathcal{S}$ to encompass the largest values of $f(\bm{\xi})$ until $\mathcal{S}$ is large enough to satisfy the condition (\ref{Eqn:CoherenceUnbounded}), discussed in Section~\ref{subsubsec:Coherence}. The justification for this is that in unbounded domains, e.g., for Hermite polynomials, regions of small $f(\bm{\xi})$ typically correspond to larger $\sup_{k=1:P}|\psi_k(\bm{\xi})|$ as $p$ grows~\cite{Szego,AskeyWainger,Hermite1}.

While this formulation is useful for identifying distributions for $\bm{Y}$, unfortunately, we may no longer guarantee that $\mathbb{E}[\psi_i(\bm{Y})\psi_j(\bm{Y})]-\delta_{i,j}$ is small. Fortunately, from (\ref{Eqn:TransformIntegral})  if we let
\begin{align}
\label{eqn:weightFunction}
w(\bm{Y})&:=\frac{1}{cG(\bm{Y})},
\end{align}
then $|\mathbb{E}[w^2(\bm Y)\psi_i(\bm{Y})\psi_j(\bm{Y})]-\delta_{i,j}|\le \epsilon_{i,j}$. In this way we consider $w(\bm{Y})$ to be a weight function so that $\{w(\bm{Y})\psi_i(\bm{Y})\}_{i=1}^P$, are approximately orthonormal random variables. This function defines the diagonal positive-definite matrix $\bm{W}$ from (\ref{eqn:constrained}) as
\begin{align*}
 \bm{W}(i,i)= w(\bm{\xi}^{(i)}),
\end{align*}
where $\bm{\xi}^{(i)}$ is the $i$th realization of $\bm{Y}$. For a notational symmetry with the conceptual connection, we refer to all realized random vectors by $\bm{\xi}$ regardless of the sampling distribution for $\bm{\xi}$, noting that the weight function, $w$, depends on that distribution. Additionally, we note that for simulation, we are not interested in the normalizing constant, $c$, associated with describing our sampling distribution.

\subsection{\texorpdfstring{Coherence Definition}{Coherence Definition}}
\label{subsubsec:Coherence}
Consider realizations of $w(\bm{Y})\psi_k(\bm{Y})$ for $k=1:P$. We investigate the coherence parameter defined as in~\cite{CandesPlan} by
\begin{align}
\label{Eqn:CoherenceBounded}
\mu(\bm{Y}) &:= \sup_{k=1:P,\bm{\xi}\in\Omega}|w(\bm{\xi})\psi_k(\bm{\xi})|^2.
\end{align}
This is a conceptually simple parameter that we will see allows us to bound the number of samples necessary to accurately recover $\bm{c}$ via a solution to (\ref{eqn:constrained}). From (\ref{eqn:weightFunction}) {\color{black}and (\ref{Eqn:CoherenceBounded})} we are motivated to take $G(\bm{\xi})$ to be $B(\bm{\xi})$ as defined in (\ref{eqn:btspec}), and as we shall show in Section \ref{subsec:MCMC}, this choice leads to an optimally minimal coherence. Fortunately, asymptotic results give us approximations to the distribution $f_{\bm Y}(\bm\xi)$ of $\bm{Y}$ in certain cases. These approximations also lead to easier simulation of $\bm{Y}$ {\color{black}when $f_{\bm Y}(\bm\xi)$ corresponds to the choice of $G(\bm{\xi})=B(\bm{\xi})$, as described in Section \ref{subsubsec:MCMCmethod}.} 

We utilize the definition in (\ref{Eqn:CoherenceBounded}) when analyzing Legendre polynomials which are bounded on the domain $[-1,1]^d$. However, we note that (\ref{Eqn:CoherenceBounded}) is not useful when $\sup_{k=1:P,\bm{\xi}\in\Omega}|w(\bm{\xi})\psi_k(\bm{\xi})|^2$ is infinite, such as when $\psi_k(\bm{\xi})$ are Hermite polynomials and $w(\bm{\xi}) = 1$. If $N$ is the number of samples of $\bm{Y}$ which we will take, following \cite{CandesPlan}, we consider a truncation of $\Omega$ to some appropriate $\mathcal{S}$ and let
\begin{align}
\label{Eqn:CoherenceUnbounded}
\mu(\bm{Y}) &:= \mathop{\min}\limits_{\mathcal{S}}\left\{\sup_{k=1:P,\bm{\xi}\in\mathcal{S}}|w(\bm{\xi})\psi_k(\bm{\xi})|^2,\cdots\right.\\
\nonumber
\mbox{subject to }&\left.\mathbb{P}(\mathcal{S}^c)<\frac{1}{NP};\ \mathop{\sum}\limits_{k=1}^{P}\mathbb{E}\left[|w(\bm{Y})\psi_k(\bm{Y})|^2\bm{1}_{\mathcal{S}^{c}}\right]\le\frac{1}{20}P^{-1/2}\right\},
\end{align}
where $\mathcal{S}$ is a subset of the support of $f$, a superscript $c$ denotes a set complement, and $\bm{1}$ is the indicator function. While (\ref{Eqn:CoherenceBounded}) highlights the quantity that we seek to bound, the conditions in (\ref{Eqn:CoherenceUnbounded}) insure that the truncation from $\Omega$ to $\mathcal{S}$ has a limited effect on the orthogonality of the set of random variables, $\{w(\bm{Y})\psi_k(\bm{Y})\}_{k=1}^P$. As normal random variables are unbounded, we use (\ref{Eqn:CoherenceUnbounded}) in the analysis of Hermite polynomials with a truncation $\mathcal{S}$ that captures the essential behavior of $w(\bm{\xi})\psi_k(\bm{\xi})$. These definitions are compatible in that either definition may be used for the following theorems.

\subsection{\texorpdfstring{Convergence Theorems}{Convergence Theorems}}
\label{sec:candes_theorems}

The following theorems use the coherence parameter in either (\ref{Eqn:CoherenceBounded}) or (\ref{Eqn:CoherenceUnbounded}) to bound the number of samples necessary to recover a sparse signal with high probability.
\label{subsec:coherenceTheorems}
\begin{thm}
\label{thm:SampleDepth}\cite{CandesPlan}
Let $\bm{c}$ be a fixed arbitrary vector in $\mathbb{R}^{P}$ with at most $s$ non-zero elements such that $\bm{\Psi c} = \bm{u}$, where $\bm\Psi$ is defined as in (\ref{eqn:psi_u}). With probability at least $1-5/P-e^{-\beta}$, and $C$ an absolute constant, if
\begin{align}
\label{eqn:NecSamp}
N\ge C(1+\beta)\mu(\bm{Y})s\log(P),
\end{align}
then $\bm{c}=\arg\min_{\bm{c}}\{\|\bm{c}\|_1 :\bm{W\Psi c} = \bm{Wu}\}$.
\end{thm}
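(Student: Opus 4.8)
Since the statement is precisely the recovery guarantee of the RIPless theory, the plan is to follow that blueprint. Write $\bm{A} := \tfrac{1}{\sqrt N}\bm{W}\bm{\Psi}$, whose $i$th row is $\tfrac{1}{\sqrt N}\bm{a}_i^T$ with $\bm{a}_i := w(\bm{\xi}^{(i)})\,(\psi_1(\bm{\xi}^{(i)}),\dots,\psi_P(\bm{\xi}^{(i)}))^T$; by the (near-)orthonormality built into $w$ through (\ref{Eqn:TransformIntegral})--(\ref{eqn:weightFunction}), the $\bm{a}_i$ are i.i.d.\ and (approximately) isotropic, $\mathbb{E}[\bm{a}_i\bm{a}_i^T]\approx\bm{I}$, while $\|\bm{a}_i\|_\infty^2\le\mu(\bm{Y})$ on the relevant set. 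The first reduction is classical convex duality for (\ref{eqn:constrained}) with $\delta=0$: letting $T=\mathrm{supp}(\bm{c})$ with $|T|\le s$, exact recovery of the \emph{fixed} vector $\bm{c}$ holds once (i) the restricted Gram matrix is well conditioned, $\|\bm{A}_T^*\bm{A}_T-\bm{I}_T\|\le\tfrac12$, and (ii) there is an \emph{inexact dual certificate} $\bm{v}=\bm{A}^*\bm{z}$ in the row space of $\bm{A}$ with $\|\bm{v}_T-\mathrm{sgn}(\bm{c}_T)\|_2\le\tfrac14$ and $\|\bm{v}_{T^c}\|_\infty\le\tfrac14$. It then suffices to produce (i) and (ii) with the claimed probability whenever $N\gtrsim(1+\beta)\,\mu(\bm{Y})\,s\log P$.

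For (i), write $\bm{A}_T^*\bm{A}_T=\tfrac1N\sum_{i=1}^N(\bm{a}_i)_T(\bm{a}_i)_T^*$, a sum of i.i.d.\ rank-one positive semidefinite matrices with mean $\approx\bm{I}_T$ and per-term operator norm $\|(\bm{a}_i)_T\|_2^2\le s\,\mu(\bm{Y})$; the matrix Bernstein inequality yields $\|\bm{A}_T^*\bm{A}_T-\bm{I}_T\|\le\tfrac12$ except with probability $\le e^{-\beta}$ once $N\gtrsim(1+\beta)\,\mu(\bm{Y})\,s\log s$. For (ii), use the golfing scheme: split the $N$ rows into $L\asymp\log P$ disjoint batches of size $\asymp(1+\beta)\,\mu(\bm{Y})\,s$, set $\bm{v}^{(0)}=0$, and let batch $\ell$ correct the current on-support residual toward $\mathrm{sgn}(\bm{c}_T)$. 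Two per-batch estimates close the loop: a local-isometry bound on each batch forces geometric contraction, $\|\mathrm{sgn}(\bm{c}_T)-\bm{v}^{(\ell)}_T\|_2\le 2^{-\ell}\sqrt s$; and a vector-Bernstein bound on $\|\bm{B}^*\bm{B}\bm{q}\|_\infty$ for a fixed $\bm{q}$ supported on $T$ (with $\bm{B}$ a batch), followed by a union bound over the $P$ off-support coordinates, keeps $\|\bm{v}^{(\ell)}_{T^c}\|_\infty$ bounded by a convergent geometric series staying below $\tfrac14$. After $L\asymp\log(\sqrt s)$ batches both parts of (ii) hold, except with probability $\le e^{-\beta}$ after summing over batches (each tuned with tail parameter $\beta+O(\log\log P)$).

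In the unbounded (Hermite) case the pertinent $\mu(\bm{Y})$ is the truncated quantity (\ref{Eqn:CoherenceUnbounded}). Condition on the event $\mathcal{E}$ that every sample lands in the truncation set $\mathcal{S}$; since $N\,\mathbb{P}(\mathcal{S}^c)<1/P$, $\mathbb{P}(\mathcal{E}^c)\le 1/P$, and the second constraint in (\ref{Eqn:CoherenceUnbounded}) bounds the deviation of the conditional second-moment matrix from $\bm{I}$, so that steps (i)--(ii) go through on $\mathcal{E}$ with the same constants. A union bound over the local-isometry event, the $\asymp\log P$ golfing events, and $\mathcal{E}^c$ gives total failure probability $\le 5/P+e^{-\beta}$, with the $C(1+\beta)$ factor absorbed into the Bernstein tail parameters.

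The main obstacle is the golfing-scheme bookkeeping: controlling the $\ell_\infty$ growth of the off-support certificate through all $\asymp\log P$ iterations with explicit constants, and propagating the approximate-isotropy and truncation errors (the $\epsilon_{i,j}$ and the $\tfrac1{20}P^{-1/2}$ budget) through both the contraction and the $\ell_\infty$ estimates without inflating the sample count. Everything else is standard concentration and convex duality.
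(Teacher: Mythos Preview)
Your outline is faithful to the Cand\`es--Plan RIPless argument, but note that this paper does not supply its own proof of Theorem~\ref{thm:SampleDepth}: the theorem is quoted directly from \cite{CandesPlan} as background (see Section~\ref{sec:candes_theorems}), and Section~\ref{sec:Proofs} only proves the paper's new results (Theorems~\ref{thm:NatSampleCoherence}--\ref{thm:LowPTransformedSamples}). So there is no ``paper's proof'' to compare against; what you have sketched \emph{is} essentially the proof in the cited reference---inexact dual certificate via the golfing scheme, matrix Bernstein for the local isometry on $T$, vector Bernstein plus a union bound for the off-support $\ell_\infty$ control, and a truncation/conditioning argument to handle the unbounded case through (\ref{Eqn:CoherenceUnbounded}). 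That is the intended route, and your identification of the main technical bookkeeping (propagating the approximate-isotropy errors $\epsilon_{i,j}$ and the $\tfrac{1}{20}P^{-1/2}$ budget through the golfing iterations) is accurate.
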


When allowing for truncation error and considering a regularized version of this $\ell_1$-minimization problem as in (\ref{eqn:regularized}), a similar result may be stated. Following \cite{CandesPlan}, we require the condition that $\|\bm{\Psi}^{T}\bm{W}^2\bm{z}\|_{\infty}\le\nu$, for some $0\le \nu<\infty$, where $\bm{z}$ is the associated truncation error with the model $\bm{u}=\bm{\Psi}\bm{c}+\bm{z}$ for an arbitrary solution vector $\bm{c}$. Additionally, we denote by $\sigma_w$ the standard deviation of the weighted truncation error $w(\bm Y)z(\bm Y)$.

\begin{thm}
\label{thm:SampleDepthNoise}\cite{CandesPlan}
Let $\bm{c}$ be a fixed arbitrary vector in $\mathbb{R}^{P}$, and $\bm{c}_s$ be a vector such that $\bm{c}_s(i) = \bm{c}(i)$ for the $s$ largest $|\bm{c}(i)|$, and $\bm{c}_s(i) = 0$ otherwise. For some $\bar{s}$, let 
\begin{align*}
N\ge C(1+\beta)\mu(\bm{Y})\bar{s}\log(P).
\end{align*}
With probability at least $1-6/P-6e^{-\beta}$, and $C$ an absolute constant, the solution to 
\begin{align*}
\hat{\bm{c}}=\mathop{\min}\limits_{\bar{\bm{c}}\in\mathbb{R}^P} \frac{1}{2}\|\bm{W\Psi}\bar{\bm{c}}-\bm{Wu}\|_2^2+\lambda\sigma_w\|\bar{\bm{c}}\|_1,
\end{align*}
with $\lambda = 10\sqrt{\frac{\log(P)}{N}}$ obeys for any $\bm{c}$,
\begin{align*}
\|\hat{\bm{c}}-\bm{c}\|_2 &\le \mathop{\min}\limits_{1\le s\le \bar{s}}C(1+\alpha)\left[\frac{\|\bm{c}-\bm{c}_s\|_1}{\sqrt{s}}+\sigma_w\sqrt{\frac{s\log(P)}{N}}\right];\\
\|\hat{\bm{c}}-\bm{c}\|_1 &\le \mathop{\min}\limits_{1\le s\le \bar{s}}C(1+\alpha)\left[\|\bm{c}-\bm{c}_s\|_1+\sigma_w s\sqrt{\frac{\log(P)}{N}}\right],
\end{align*}
where $\alpha:=\sqrt{\frac{(1+\beta)s\log^5(P)}{N}}$.
\end{thm}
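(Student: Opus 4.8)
The statement is the robust, ``RIPless'' recovery guarantee of Candès and Plan specialized to the weighted Polynomial Chaos design, and the plan is to follow their argument, tracking the mild non-orthogonality $\epsilon_{i,j}$ produced by the truncation $\mathcal{S}$. Write the rows of $\bm{W\Psi}$ as i.i.d.\ random vectors $\bm{a}_i := w(\bm{\xi}^{(i)})\bigl(\psi_1(\bm{\xi}^{(i)}),\dots,\psi_P(\bm{\xi}^{(i)})\bigr)^T$, so that $\bm{W\Psi}=(\bm{a}_1,\dots,\bm{a}_N)^T$; by (\ref{Eqn:TransformIntegral})--(\ref{eqn:weightFunction}) the matrix $\mathbb{E}[\bm{a}_i\bm{a}_i^T]$ differs from $\bm{I}$ by at most $\max_{i,j}\epsilon_{i,j}$, while $\|\bm{a}_i\|_\infty^2\le\mu(\bm{Y})$ a.s.\ on $\mathcal{S}$. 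I would first dispose of the truncation: the event that some sample falls in $\mathcal{S}^c$ has probability at most $N\,\mathbb{P}(\mathcal{S}^c)<1/P$ by (\ref{Eqn:CoherenceUnbounded}), and the two summability constraints in (\ref{Eqn:CoherenceUnbounded}) guarantee that conditioning on $\{\bm{\xi}^{(i)}\in\mathcal{S}\ \forall i\}$ perturbs the relevant first and second moments of the $\bm{a}_i$ by $O(P^{-1/2})$; everything below is carried out under this conditioning, absorbing the perturbations and $\max_{i,j}\epsilon_{i,j}$ into the absolute constants.

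The probabilistic core has two parts. First, a local (weak) isometry: with $N\ge C(1+\beta)\mu(\bm{Y})\bar{s}\log P$, for every index set $T$ with $|T|\le 2\bar{s}$ the submatrix $\tfrac1N\sum_i(\bm{a}_i)_T(\bm{a}_i)_T^T$ has spectrum in $[\tfrac12,\tfrac32]$, with failure probability of order $e^{-\beta}/P$; this is an operator-Bernstein (matrix Chernoff) estimate using the almost-sure bound $\mu(\bm{Y})$ and near-isotropy, and it plays the role the RIP plays elsewhere. In the same pass one records the off-support bound $\bigl\|\tfrac1N\sum_i(\bm{a}_i)_{T^c}(\bm{a}_i)_T^T\,\bm{v}\bigr\|_\infty\lesssim\sqrt{\log P/N}\,\|\bm{v}\|_2$. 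Second, an inexact dual certificate built by the golfing scheme: partition the $N$ rows into $\approx\log P$ disjoint batches and iteratively construct $\bm{v}=(\bm{W\Psi})^T\bm{\pi}$ with $\|\bm{v}_T-\mathrm{sgn}(\bm{c}_T)\|_2\le\tfrac14$, $\|\bm{v}_{T^c}\|_\infty\le\tfrac14$, and $\|\bm{\pi}\|_2\lesssim\sqrt{s}$, each batch using the isometry and off-support bounds on that batch to contract the residual geometrically.

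Given the weak isometry on $2\bar{s}$-sparse vectors and the certificate, the deduction is deterministic and standard. The subdifferential optimality conditions of the LASSO problem force the error $\bm{h}:=\hat{\bm{c}}-\bm{c}$ into a cone obeying $\|\bm{h}_{T^c}\|_1\lesssim\|\bm{h}_T\|_1+\|\bm{c}-\bm{c}_s\|_1$; combining this with the weak isometry and with the residual estimate $\|(\bm{W\Psi})^T\bm{W}(\bm{u}-\bm{\Psi}\hat{\bm{c}})\|_\infty\le\lambda\sigma_w\bigl(1+o(1)\bigr)$ yields $\|\bm{h}\|_2\lesssim\|\bm{c}-\bm{c}_s\|_1/\sqrt{s}+\sigma_w\sqrt{s\log P/N}$ and the analogous $\ell_1$ bound. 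The residual estimate is where $\lambda=10\sqrt{\log P/N}$ enters: one needs $\|(\bm{W\Psi})^T\bm{Wz}\|_\infty$ to concentrate near $\sigma_w\sqrt{\log P/N}$, which uses the deterministic bound $\|\bm{\Psi}^T\bm{W}^2\bm{z}\|_\infty\le\nu$ to control tails in a scalar Bernstein inequality. A union bound over the failure events from the two probabilistic parts and the truncation event gives the claimed probability $1-6/P-6e^{-\beta}$, and minimizing the resulting estimate over $1\le s\le\bar{s}$ produces the stated form with $\alpha=\sqrt{(1+\beta)s\log^5 P/N}$, the powers of $\log P$ accumulating from the $\approx\log P$ golfing batches.

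The main obstacle is the golfing construction: closing the iteration simultaneously in the $\ell_2$-on-$T$ and $\ell_\infty$-on-$T^c$ metrics while keeping $\|\bm{\pi}\|_2$ bounded, and doing so with the weighted, only approximately isotropic vectors $\bm{a}_i$ rather than an exactly orthonormal design — one must verify that the $O(P^{-1/2})$ and $\epsilon_{i,j}$ perturbations do not break the geometric contraction. Everything else is either a routine concentration inequality (the weak isometry and the noise term) or the classical convex-geometry argument (the final deterministic deduction), as in Theorem~\ref{thm:SampleDepth} and~\cite{CandesPlan}.
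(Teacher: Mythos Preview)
The paper does not prove Theorem~\ref{thm:SampleDepthNoise} at all: it is stated in Section~\ref{sec:candes_theorems} as a background result imported directly from~\cite{CandesPlan}, and Section~\ref{sec:Proofs} contains proofs only of Theorems~\ref{thm:NatSampleCoherence}--\ref{thm:LowPTransformedSamples}. So there is nothing in the paper to compare your proposal against.

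That said, your sketch is a reasonable outline of the Cand\`es--Plan argument itself, with one caveat worth flagging. You write that the matrix-Bernstein step gives a local isometry ``for every index set $T$ with $|T|\le 2\bar{s}$.'' Taken literally, that is a uniform RIP-type statement and would cost an extra $\log P$ factor from the union bound over ${P\choose 2\bar{s}}$ supports; the whole point of the RIPless framework is to avoid this by establishing the isometry only on the \emph{fixed} support $T$ of the signal (and the enlarged supports that appear along the golfing iteration), which is why the sample complexity is linear in $\bar{s}\log P$ rather than $\bar{s}\log^2 P$. If you meant the latter, the rest of the sketch---golfing certificate, cone condition from the LASSO optimality, scalar Bernstein for the noise term---tracks the structure of~\cite{CandesPlan} faithfully, and your handling of the truncation to $\mathcal{S}$ via the two constraints in (\ref{Eqn:CoherenceUnbounded}) is the right way to absorb the approximate isotropy into constants.
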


We note that when $\|\bm{\Psi}^{T}\bm{W}^2\bm{z}\|_{\infty}$ cannot be bounded by a $\nu$, we may be interested in a subset $\mathcal{S}$ of $\Omega$ that will be sampled with sufficiently high probability and admit a bound on $\|\bm{\Psi}^{T}\bm{W}^2\bm{z}\bm{1}_{(\bm{Y}_1,\cdots,\bm{Y}_N)\in\mathcal{S}}\|_{\infty}$. This may be related to the truncation of $\Omega$ to $\mathcal{S}$ in the conditions of (\ref{Eqn:CoherenceUnbounded}).

These results show how a bound on $\mu(\bm{Y})$ translates into a bound on the number of samples needed to recover a solution vector, and provide a theoretical justification to the identification of distributions for $\bm{Y}$ which yield a smaller bound on $\mu(\bm{Y})$. With these bounds we may utilize Theorems~\ref{thm:SampleDepth} and~\ref{thm:SampleDepthNoise} to bound the number of samples required to recover solutions of any particular sparsity.
\section{Sampling Methods}
\label{sec:sampling}

Here we describe the sampling methods that we consider in this work, and present theorems related to recovery when we use them. We first consider a sampling according to random variables defined by the orthogonality measure in Section~\ref{subsec:std}. Such a sampling, dubbed here  {\it standard sampling}, is commonly used in PC regression, \cite{Hosder06,LeMaitre10,Doostan11a,Mathelin12a}. Second, we consider sampling from a distribution {\color{black}related to} an asymptotic analysis of the orthogonal polynomials $\psi_k(\bm\Xi)$ in Section~\ref{subsec:asym}, and refer to it as {\it asymptotic sampling}. Finally, in Section~\ref{subsec:MCMC}, we introduce the {\it coherence-optimal sampling} that corresponds to minimizing the coherence parameters defined in Section~\ref{subsubsec:Coherence}.

\subsection{Standard Sampling}
\label{subsec:std}

Here we consider sampling $\bm{\xi}$ according to $f(\bm\xi)$, the distribution with respect to which the PC bases are naturally orthogonal. This implies taking $w(\bm\xi) = 1$.%

\subsubsection{Standard Sampling Method}
\label{subsubsec:stdmethod}
For the $d$-dimensional Legendre polynomials the standard method corresponds to sampling from the uniform distribution on $[-1,1]^d$, while for $d$-dimensional Hermite polynomials this corresponds to samples from a multi-variate normal distribution such that each of $d$ coordinates is an independent standard normal random variable.

\subsubsection{Theorems}
\label{subsubsec:stdTheorems}

A standard sampling of Hermite polynomials leads to a coherence bounded as in Theorem~\ref{thm:NatSampleCoherence}, while a standard sampling of Legendre polynomials leads to a coherence bounded as in Theorem~\ref{thm:NatSampleCoherenceLeg}.  We note that these results hold for a number of dimensions $d$ and a set of orthogonal polynomials of arbitrary total order $p$ as defined in Section~\ref{subsec:PCE}. 

\begin{thm}
\label{thm:NatSampleCoherence}
Assume that $d=o(p)$, that is, $d$ is asymptotically dominated by $p$. Additionally, let $N = O(P^k)$ for some $k>0$, that is, the number of samples does not grow faster than a polynomial in the number of basis polynomials considered. For $d$-dimensional Hermite polynomials of total order $p\ge 1$, the coherence in (\ref{Eqn:CoherenceUnbounded}) is bounded by 
\begin{align}
\mu(\bm{\Xi})&\le C_p\cdot \eta_p^p,
\end{align}
for some constants $C_p,\eta_p$ depending on $p$. For $d = o(p)$, and as $p\rightarrow\infty$, we may take $C_p$ and $\eta_p$ to be larger than but arbitrarily close to $1$ and $\exp(2-\log(2))\approx 3.6945$, respectively.
\end{thm}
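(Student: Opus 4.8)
The plan is to specialize the coherence definition (\ref{Eqn:CoherenceUnbounded}) to the standard sampling, where $w\equiv 1$, so that $\mu(\bm\Xi)=\min_{\mathcal S}\sup_{k=1:P,\,\bm\xi\in\mathcal S}|\psi_k(\bm\xi)|^2$ subject to the two truncation constraints, and then to exploit the tensor structure $\psi_{\bm k}(\bm\xi)=\prod_{i=1}^d\psi_{k_i}(\xi_i)$ to reduce everything to estimates on univariate normalized Hermite polynomials. Concretely I would: (i) obtain a sharp pointwise envelope for $|\psi_n(\xi)|$; (ii) use it to describe the set on which $B(\bm\xi)$ from (\ref{eqn:btspec}) is large; (iii) choose $\mathcal S$ to be essentially a Euclidean ball (or a level set of $B$) of the smallest radius compatible with the two constraints; and (iv) optimize the allocation of the total-order budget $\|\bm k\|_1\le p$ across the $d$ coordinates to read off the exponential rate $\eta_p^{\,p}$.

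For step (i) I would invoke the Plancherel--Rotach asymptotics together with a Cram\'er-type inequality for probabilists' Hermite polynomials (as in~\cite{Szego,AskeyWainger,Hermite1}), giving a uniform bound of the form $|\psi_n(\xi)|\le c\,e^{\xi^2/4}$, refined inside the oscillatory range $\xi^2\lesssim 4n+2$ by the WKB envelope $(\,4n+2-\xi^2\,)^{-1/4}$ and capped by the Airy scaling near the turning point $\xi^2\approx 4n+2$; I would also need the Christoffel--Darboux estimate for $\sum_{n\le p}\psi_n(\xi)^2$ in order to control the second constraint in (\ref{Eqn:CoherenceUnbounded}). Step (iii) is where the hypotheses enter: $\mathbb P(\mathcal S^c)<1/(NP)$ is handled by a chi-squared tail bound, and since $N=O(P^k)$ and $d=o(p)$ force $\log(NP)=o(p)$, this constraint is not the binding one; the expectation constraint $\sum_k\mathbb E[|\psi_k(\bm Y)|^2\bm 1_{\mathcal S^c}]\le\tfrac1{20}P^{-1/2}$, rewritten via the fact that $\psi_k^2(\bm\xi)f(\bm\xi)$ is a product of squared Hermite functions and hence uniformly bounded and spatially localized, forces the truncation radius to be $\Theta(p)$ in squared Euclidean norm.

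In step (iv) I would bound $\sup_{\bm\xi\in\mathcal S}\prod_i|\psi_{k_i}(\xi_i)|^2$ by distributing the available norm $\|\bm\xi\|_2^2$ and the available order $\|\bm k\|_1$ over the coordinates, which reduces the question to a one-dimensional optimization; carrying out the Stirling estimates and letting $p\to\infty$ with $d=o(p)$ (so that all $d$-dependent and polynomial-in-$p$ factors collapse into a prefactor $C_p\to 1$) yields $\mu(\bm\Xi)\le C_p\,\eta_p^{\,p}$ with $\eta_p$ approaching $\exp(2-\log 2)$. The main obstacle, I expect, is the coupling between steps (iii) and (iv): one must show that the minimal admissible $\mathcal S$ may \emph{omit} the thin neighbourhoods of the top-order turning points (on which $B$ attains its largest values) without violating the expectation constraint, while still retaining essentially all of the Christoffel--Darboux mass, and that the resulting $\sup_{\mathcal S}B^2$ is exactly of order $\bigl(\exp(2-\log 2)\bigr)^{p}$. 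This is the step that pins down the precise constant, and it is where the Airy-regime Hermite estimates, the Christoffel--Darboux tail, and Gaussian concentration must be combined carefully.
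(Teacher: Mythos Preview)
Your overall architecture matches the paper's: choose $\mathcal{S}=\{\|\bm\xi\|_2\le r_p\}$ with $r_p^2\sim 2p$ (physicists', i.e.\ $\sim 4p$ in your probabilists' normalization), verify the two constraints of (\ref{Eqn:CoherenceUnbounded}) via Gaussian/erfc tails and the hypothesis $N=O(P^k)$, $d=o(p)$ (which makes $\log(NP)=o(p)$ as you say), and then reduce $\sup_{\mathcal{S}}|\psi_{\bm k}|^2$ to a one--coordinate extremum by the tensor structure and the order/radius budgets.

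Where your plan diverges from the paper is in the mechanism that produces the constant $\exp(2-\log 2)$. The paper does \emph{not} excise turning-point neighborhoods from $\mathcal{S}$; it keeps the full ball of radius $r_p=\sqrt{(2+\epsilon_p)p+1}$, which lies just beyond the turning point of $\psi_p$. The exponent is then obtained by a \emph{sharp monotonic-region asymptotic} for $\psi_k$ (the approximation (\ref{eqn:BetterAsym}) from~\cite{Hermite1}), rewritten as $|\psi_k(\xi)|\approx\exp(\eta_k(\xi)\xi^2)$ with an explicit $\eta_k$. Two monotonicity properties of $\eta_k$ (decreasing in $\xi$, increasing in $k$; Lemma~\ref{lem:PolyBeh}) together with the H\"older-type inequality $\bm\eta^T\bm\xi^2\le\|\bm\eta\|_\infty\|\bm\xi\|_2^2$ collapse the multi-dimensional sup to $|\psi_p(r_p)|$, whose value is read off from $\eta_p(r_p)$. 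The bounds you list in step~(i) --- Cram\'er, WKB envelope, Airy cap --- are exactly the paper's Table~\ref{Tab:Bounds} and control only the oscillatory and transition regions; they enter the paper's argument merely to confirm that the sup on the ball is not attained there, and by themselves give only $\mu\lesssim e^{r_p^2}$ with polynomial corrections. What pins down the stated constant is the monotonic-region expansion, which you have not invoked. So your ``main obstacle'' (carving out turning-point slivers while preserving Christoffel--Darboux mass) is a red herring: the paper's route is a direct evaluation of $|\psi_p(r_p)|$ via~(\ref{eqn:BetterAsym}), and the missing ingredient in your plan is precisely that asymptotic and the monotonicity analysis of the resulting exponent $\eta_k(\xi)$.

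A smaller difference: for the expectation constraint the paper does not use Christoffel--Darboux but simply bounds $\sum_k\mathbb E[\psi_k^2\bm 1_{\mathcal S^c}]$ by $P$ times the worst univariate tail $\int_{|\xi|>r_p}\psi_p^2(\xi)f(\xi)\,d\xi$, again estimated via the monotonic approximation (Lemma~\ref{lem:CohBounds}, (\ref{eqn:CohBound1})). The $d=o(p)$ hypothesis is then used through a Stirling estimate on $P=\binom{p+d}{d}$ to make $P^{3/2}$ subexponential in $p$.
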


We note that together with Theorems~\ref{thm:SampleDepth} and~\ref{thm:SampleDepthNoise}, this implies that with high probability, the number of samples required for recovery from Hermite polynomials grows exponentially with the total order of approximation.  The following theorem for Legendre polynomials is analogous to previous results in~\cite{RauhutWard}, and provides a similar result for the number of samples required for signal recovery.\\

\noindent{\bf Remark:} When sampling Hermite polynomials, we have the technical requirement that $N=O(P^k)$ for some finite $k$, and we note that this condition is satisfied here as $N<P$ is the case of interest in compressive sampling.

\begin{thm}
\label{thm:NatSampleCoherenceLeg}
A standard sampling of the $d$-dimensional Legendre polynomials of total order $p$ gives a coherence of
\begin{align}
\label{eqn:LegNatBound}
\mu(\bm{\Xi})&\le \exp(2p).
\end{align}
\end{thm}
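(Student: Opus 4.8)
The plan is to bound $\mu(\bm{\Xi})$ directly from its definition in (\ref{Eqn:CoherenceBounded}) with $w(\bm\xi)=1$, i.e.\ to bound $\sup_{\bm k,\bm\xi}\psi_{\bm k}^2(\bm\xi)$ over all multi-indices $\bm k$ with $\|\bm k\|_1\le p$ and all $\bm\xi\in[-1,1]^d$. Since the $d$-dimensional basis functions are products $\psi_{\bm k}(\bm\xi)=\prod_{i=1}^d \psi_{k_i}(\xi_i)$, the first step is to reduce to the univariate case: for each coordinate, $|\psi_{k_i}(\xi_i)|\le \sup_{\xi\in[-1,1]}|\psi_{k_i}(\xi)|$, so $\sup_{\bm\xi}|\psi_{\bm k}(\bm\xi)|\le \prod_{i=1}^d \sup_{\xi}|\psi_{k_i}(\xi)|$.

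Next I would invoke the standard fact that the normalized univariate Legendre polynomial of degree $k$ (with $\mathbb{E}[\psi_k^2]=1$ under the uniform measure on $[-1,1]$) attains its maximum absolute value at the endpoints, with $\sup_{\xi\in[-1,1]}|\psi_k(\xi)| = \sqrt{2k+1}$. This is classical (see~\cite{Szego}); the normalization constant follows from $\int_{-1}^1 P_k(\xi)^2\, \tfrac{d\xi}{2} = \tfrac{1}{2k+1}$ together with $P_k(1)=1$. Hence $\sup_{\bm\xi}\psi_{\bm k}^2(\bm\xi)\le \prod_{i=1}^d (2k_i+1)$.

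The remaining step is a maximization of $\prod_{i=1}^d (2k_i+1)$ subject to $\sum_i k_i \le p$ over nonnegative integers $k_i$. Using $2k_i+1\le 3^{k_i}$ for every integer $k_i\ge 0$ (equality at $k_i\in\{0,1\}$; for $k_i\ge 2$ one checks $2k_i+1 < 3^{k_i}$), one gets $\prod_i(2k_i+1)\le 3^{\sum_i k_i}\le 3^p$. Since $3^p \le \exp(2p)$ because $\log 3 < 2$, this yields $\mu(\bm\Xi)\le \exp(2p)$, as claimed. (A slightly tighter route maximizes the product exactly — the constrained optimum concentrates all the degree in a single coordinate, giving $2p+1$, which is far smaller than $3^p$ — but the crude bound $3^p$ already suffices and keeps the argument clean.)

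There is no serious obstacle here; the only point requiring care is the sharp univariate endpoint bound $\sup_{[-1,1]}|\psi_k|=\sqrt{2k+1}$ and the correct normalization convention, since the whole theorem rests on it. Everything else — the tensor-product reduction and the elementary inequality $2k+1\le 3^k$ — is routine. I expect the proof to be short.
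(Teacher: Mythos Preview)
Your proof is correct and follows the same skeleton as the paper's: reduce to the univariate case via the tensor-product structure, invoke the endpoint bound $\sup_{[-1,1]}|\psi_k|=\sqrt{2k+1}$ (the paper's Lemma~\ref{lem:Legendre}), then bound $\prod_i(2k_i+1)$ under $\sum_i k_i\le p$. The paper handles this last step by splitting into cases---for $p\le d$ it argues the bound is $3^p$, and for $p>d$ it obtains the sharper $(2p/d+1)^d$ before relaxing to $\exp(2p)$---whereas your single elementary inequality $2k_i+1\le 3^{k_i}$ gives $3^p$ uniformly in $d$ and $p$ with no case split. Your route is cleaner for the headline bound; the paper's case split buys the dimension-dependent refinement mentioned just after the theorem statement. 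One correction to your parenthetical aside: the constrained maximum of $\prod_i(2k_i+1)$ is achieved by \emph{spreading} the degree, not concentrating it (e.g.\ with $p=2$, $(k_1,k_2)=(1,1)$ gives $9$ while $(2,0)$ gives only $5$); this slip does not affect your main argument.
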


As we shall see in Section \ref{subsec:JacProofNatSamp}, for the case of $p<d$ the bound in (\ref{eqn:LegNatBound}) may be improved to $\mu(\bm{\Xi})\le 3^p \approx \exp(1.1p)$. Additionally, for $p>d$, the bound in (\ref{eqn:LegNatBound}) is loose, but a sharper dimension-dependent bound is given by $(2p/d+1)^d$.

\subsection{Asymptotic Sampling}
\label{subsec:asym}

{\color{black}Here we consider taking $G(\bm{\xi})$ to approximate or coincide with the asymptotic (in order) envelope for the polynomials as the order $p$ goes to infinity. Specifically, for the case of Hermite polynomials we consider a relatively simple envelope function over a significant range of $\bm\xi$, corresponding to a uniform sampling, though this envelope does not coincide with $B(\bm\xi)$ and is loose compared to known behavior of Hermite polynomials at high orders, \cite{krasikov2004new}. The uniform approximation is, however, both simple to simulate and analyze. For the case of Legendre polynomials, we take $G(\bm{\xi})$ to be $B(\bm\xi)$ for asymptotically large order $p$, which corresponds to Chebyshev sampling. For both cases, sampling with this choice of $G(\bm\xi)$ leads to coherence parameters with weaker dependence on $p$, as compared to the standard sampling.}


%
\subsubsection{Asymptotic Sampling Method}
\label{subsubsec:asymmethod}

For $d$-dimensional Hermite polynomials, {\color{black}we sample uniformly from within the $d$-dimensional ball of radius $\sqrt{2}\sqrt{2p+1}$, which corresponds to $G(\bm\xi)=\exp(\|\bm{\xi}\|_2^2/4)$ on this ball.} This choice of {\color{black}uniform sampling and} radius is motivated by the analysis of Section~\ref{subsec:keyHermite}. For completion, we outline one algorithm for sampling uniformly from the $d$-dimensional ball of radius $r$. First, let $\bm{Z}:=(Z_1,\cdots,Z_d)$ be a vector of $d$ independent normally distributed random variables with zero mean and the same variance. If $U$ is another independent random variable that is uniformly distributed on $[0,1]$, then
\begin{align*}
\bm{Y}&:=\frac{\bm{Z}}{\|\bm{Z}\|_2}rU^{1/d},
\end{align*}
represents a random sample uniformly distributed within the $d$-dimensional ball of radius $r$. This may be verified as $\bm{Z}/\|\bm{Z}\|_2$ is uniformly distributed on the $d$-dimensional hypersphere, while $rU^{1/d}$ is the distribution for the radius of the realization within the ball that coincides with a uniform sampling within the ball. Additionally, this leads to a weight function given by
\begin{align*}
w(\bm{\xi}):= \exp(-\|\bm{\xi}\|_2^2/4).
\end{align*}

\noindent{\bf Remark} ({\it Connection with Hermite function expansion}). We highlight that the application of the weight function $w(\bm{\xi})= \exp(-\|\bm{\xi}\|_2^2/4)$ to the Hermite polynomials $\psi_k(\bm \xi)$ leads to the so called {\it Hermite functions}, i.e., $\exp(-\|\bm{\xi}\|_2^2/4)\psi_k(\bm \xi)$, that are orthogonal with respect to the uniform measure,~\cite{Szego}. This implies that the Hermite polynomial expansion with asymptotic sampling is analogous to Hermite function expansion of $w(\bm{\Xi})u(\bm\Xi)$. Notice that in a standard Hermite function expansion, the solution of interest, $u(\bm{\Xi})$, is expanded in $\{\exp(-\|\bm{\Xi}\|_2^2/4)\psi_k(\bm \Xi)\}$. The only computational difference between solving for a Hermite polynomial expansion under this sampling and a Hermite function expansion, is whether, during computation of the coefficients, the realized $u(\bm{\xi})$ are multiplied by $w(\bm{\xi})$ or not. \\[-.3cm]

For the $d$-dimensional Legendre polynomials this corresponds to sampling from the Chebyshev distribution on $[-1,1]^d$,~\cite{RauhutWard}, that is the distribution in each of $d$ coordinates is
\begin{align*}
f_Y(\xi)&:=\frac{1}{\pi\sqrt{1-\xi^2}},
\end{align*}
for $\xi\in[-1,1]$. Each coordinate is easily simulated from $\cos(\pi U)$ where $U$ is uniformly distributed on $[0,1]$. Additionally, this leads to a weight function given by
\begin{align*}
 w(\bm{\xi}):=\mathop{\prod}\limits_{i=1}^d(1-\xi_i^2)^{1/4}.
\end{align*}
\subsubsection{Theorems}
\label{subsubsec:asymTheorems}

{\color{black}Analysis of the Hermite and Legendre polynomials sampled according to these alternative distributions leads to a coherence with a weaker asymptotic dependence on $p$. In Theorem~\ref{thm:TransformedSamples} and Theorem~\ref{thm:TransformedSamplesLeg} we quantify such a dependence.}

\begin{thm}
\label{thm:TransformedSamples}
Assume that $N = O(P^k)$ for some $k>0$, that is the number of samples does not grow faster than a polynomial in the number of basis polynomials considered. We note that this includes the important and common case that $N\le P$. Let $V(r,d)=(r\sqrt{\pi})^d/\Gamma(d/2+1)$ denote the volume inside the hypersphere with radius $r$ in dimension $d$. 

For the sampling of Hermite polynomials, sampling uniformly from the $d$-dimensional ball of radius $\sqrt{2}\sqrt{(2+\epsilon_{p})p+1}$, and weighting realized $\psi_k(\bm{\xi}^{(i)})$ on this ball by $w(\bm\xi^{(i)}) = \exp(-\|\bm{\xi}^{(i)}\|_2^2/4)$, gives
\begin{equation}
\mu(\bm{Y})=O(\pi^{-d/2}V(\sqrt{2p},d))=O((2p)^{d/2}/\Gamma(d/2+1)). 
\end{equation}
Here, we note that $\epsilon_{p}\rightarrow 0$ if $d = o(p)$, and that the radius of the sampling is a factor of $\sqrt{2}$ times larger than the radius of the volume in the coherence, due to a normalization explained in Section~\ref{sec:Proofs}.
\end{thm}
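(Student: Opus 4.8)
The plan is to turn the coherence into a purely one-dimensional question by exploiting the tensor-product form of the basis. For the choice $G(\bm{\xi})=\exp(\|\bm{\xi}\|_2^2/4)$ on the ball $\mathcal{S}$ of radius $R:=\sqrt{2}\sqrt{(2+\epsilon_p)p+1}$, one has $f(\bm{\xi})G^2(\bm{\xi})\equiv(2\pi)^{-d/2}$, so by (\ref{eqn:optimal_pdf_gen}) the sampling density $f_{\bm{Y}}$ is \emph{uniform} on $\mathcal{S}$ and the normalizing constant in (\ref{eqn:normalizingconstant}) is $c^{-2}=(2\pi)^{-d/2}V(R,d)$. Writing $g_n(x):=e^{-x^2/2}\psi_n(x)^2$ and using $w=1/(cG)$ from (\ref{eqn:weightFunction}) gives $|w(\bm{\xi})\psi_{\bm{k}}(\bm{\xi})|^2=c^{-2}\prod_{i=1}^{d}g_{k_i}(\xi_i)$, whence
\begin{align*}
\mu(\bm{Y})=\frac{V(R,d)}{(2\pi)^{d/2}}\ \sup_{\|\bm{k}\|_1\le p,\ \bm{\xi}\in\mathcal{S}}\ \prod_{i=1}^{d}g_{k_i}(\xi_i).
\end{align*}
A direct computation gives $V(R,d)/(2\pi)^{d/2}=\big((2+\epsilon_p)p+1\big)^{d/2}/\Gamma(d/2+1)$, which equals $\pi^{-d/2}V(\sqrt{2p},d)$ up to the factor $\big((2+\epsilon_p)p+1\big)^{d/2}/(2p)^{d/2}=1+o(1)$ once $\epsilon_p\to0$ is calibrated against $d=o(p)$. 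Thus everything reduces to bounding the supremum above by an absolute constant.

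For that I would invoke the classical pointwise estimates for Hermite polynomials/functions (\cite{Szego}, and the sharper bounds of \cite{krasikov2004new,AskeyWainger}): the Hermite function $\phi_n(x):=e^{-x^2/4}\psi_n(x)$ is uniformly bounded, and in fact $g_n(x)=\phi_n(x)^2\le1$ for every $n$ and $x$, with equality at $n=0$, $x=0$. Since $g_0(x)=e^{-x^2/2}\le1$ as well, the product $\prod_i g_{k_i}(\xi_i)$ is bounded by $1$ for all admissible $\bm{k}$ and $\bm{\xi}$ (and, should one only have an absolute constant $M>1$ in place of $1$, by $M^{\min(d,p)}$, since at most $\min(d,p)$ of the $k_i$ are nonzero, which is absorbed into the $O(\cdot)$). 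This yields $\mu(\bm{Y})=O(\pi^{-d/2}V(\sqrt{2p},d))$.

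It remains to verify that the ball $\mathcal{S}$ is an admissible truncation in the sense of (\ref{Eqn:CoherenceUnbounded}), i.e., $\mathbb{P}(\{\|\bm{\xi}\|_2>R\})<1/(NP)$ and $\sum_{k}\mathbb{E}[\,|w(\bm{Y})\psi_k(\bm{Y})|^2\bm{1}_{\mathcal{S}^c}\,]\le\tfrac{1}{20}P^{-1/2}$. Here one uses that $\|\bm{\Xi}\|_2^2$ is $\chi^2_d$-distributed and that $R^2\approx4p$ dominates $d$ because $d=o(p)$: the band carrying essentially all the mass of the degree-$p$ Hermite polynomial reaches to $|x|\approx\sqrt{2}\sqrt{2p+1}$, which is exactly the $\sqrt{2}$-inflated radius, and the extra room supplied by $\epsilon_p$, together with a standard $\chi^2_d$ tail bound and $N=O(P^k)$, $P=\binom{p+d}{d}$, makes $\mathbb{P}(\{\|\bm{\xi}\|_2>R\})$ super-polynomially small in $p$. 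The weighted-tail sum is then controlled by Cauchy--Schwarz together with a crude polynomial-in-$P$ bound on $\mathbb{E}[(\sum_k\psi_k^2)^2]$. Combining the three steps proves the theorem.

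The main obstacle is the one-dimensional Hermite estimate: one needs a bound on $g_n(x)$ that is uniform in \emph{both} $n$ and $x$ with a constant close enough to $1$ that raising it to the power $\min(d,p)$ does not degrade the stated asymptotics — this is precisely where the fine Plancherel--Rotach / turning-point behavior of Hermite functions is used, and it is also what forces the $\sqrt{2}$-enlargement of the sampling radius. A secondary and more routine difficulty is calibrating $\epsilon_p$ (and hence $R$) so that the tail conditions of (\ref{Eqn:CoherenceUnbounded}) hold while still $\epsilon_p\to0$, which is what forces the hypotheses $d=o(p)$ and $N=O(P^k)$.
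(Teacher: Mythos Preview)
Your approach is essentially the paper's: both compute the normalizing constant $c^{-2}$ as $(2\pi)^{-d/2}$ times the ball volume (the paper works in physicists' normalization, whence the $\sqrt{2}$ radius factor and $\pi^{-d/2}$ in place of $(2\pi)^{-d/2}$), and both reduce the coherence bound to a uniform bound on the Hermite \emph{function}. The paper routes this through Lemma~\ref{lem:CohBounds} together with the Askey--Wainger estimates (Table~\ref{Tab:Bounds}), and for the admissibility of $\mathcal{S}$ it simply refers back to the computations already done in the proof of Theorem~\ref{thm:NatSampleCoherence}, rather than redoing a tail estimate.

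Two points deserve correction. First, your verification of the truncation conditions in (\ref{Eqn:CoherenceUnbounded}) is aimed at the wrong target: when $\bm{Y}$ is uniform \emph{on} the ball $\mathcal{S}$, both $\mathbb{P}(\mathcal{S}^c)$ and the tail expectation in (\ref{Eqn:CoherenceUnbounded}) are identically zero, so the $\chi^2_d$ argument is not what is needed. What actually must be checked---and what the paper does by invoking the proof of Theorem~\ref{thm:NatSampleCoherence}---is that the orthogonality defect $\epsilon_{i,j}$ in (\ref{Eqn:TransformIntegral}) is small, i.e., that $\int_{\mathcal{S}^c}\psi_i(\bm{\xi})\psi_j(\bm{\xi})f(\bm{\xi})\,d\bm{\xi}$ is negligible for the Cand\`es--Plan framework. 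That \emph{is} a Gaussian tail integral, so your instinct is right, but it is not the literal condition in (\ref{Eqn:CoherenceUnbounded}). Second, the sharp claim $g_n(x)\le 1$ is not obviously delivered by the references you cite (the Cram\'er-type constant for normalized Hermite functions is not self-evidently $1$), and your fallback $M^{\min(d,p)}$ with $M>1$ is \emph{not} absorbed into $O\big((2p)^{d/2}/\Gamma(d/2+1)\big)$ when $d\to\infty$, which the hypothesis $d=o(p)$ allows. The paper is admittedly informal about its constant $C$ in (\ref{eqn:hermite_function_bound}) as well, but its Lemma~\ref{lem:CohBounds} exploits the $\ell_2$-ball constraint on $\bm{\xi}$ (via the inequality $\sum_i \eta_{k_i}(\xi_i)\xi_i^2\le \eta_p(r_p)\|\bm{\xi}\|_2^2$) rather than a pure coordinatewise product bound, which is the structural difference from your factorization.
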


In the uniform sampling in this work we set $\epsilon_{p}$ in Theorem~\ref{thm:TransformedSamples} to be zero, leaving as an open problem the determination of an optimal $\epsilon_{p}$, and hence sampling radius for uniform sampling of Hermite polynomials. Additionally, this theorem is applicable to sampling Hermite functions with a standard, i.e. uniform, sampling as $w(\bm{\xi})\psi_k(\bm{\xi})$ is a Hermite function.

In the case of Legendre polynomials sampled by Chebyshev distribution we have a complete independence of the order of approximation, which agrees with previous results in~\cite{RauhutWard}.

\begin{thm}
\label{thm:TransformedSamplesLeg}
For the sampling of $d$-dimensional Legendre polynomials according to the $d$-dimensional Chebyshev distribution and weight $\psi_k(\bm{\xi})$ proportional to $w(\bm\xi) = \prod_{i=1}^d(1-\xi_i^2)^{1/4}$, regardless of the relationship between $d$ and $p$, we have that
\begin{align}
\label{eqn:LegTransformedSample}
\mu(\bm{Y})&\le 3^d.
\end{align}
\end{thm}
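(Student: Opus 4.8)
The plan is to reduce the $d$-dimensional claim to a one-dimensional estimate via the tensor-product structure of both the basis and the Chebyshev sampling measure. Since $\psi_{\bm k}(\bm\xi) = \prod_{i=1}^d \psi_{k_i}(\xi_i)$ and $w(\bm\xi) = \prod_{i=1}^d (1-\xi_i^2)^{1/4}$, we have $|w(\bm\xi)\psi_{\bm k}(\bm\xi)|^2 = \prod_{i=1}^d (1-\xi_i^2)^{1/2}\psi_{k_i}^2(\xi_i)$, so the coherence factorizes as $\mu(\bm Y) = \sup_{\|\bm k\|_1\le p,\ \bm\xi\in[-1,1]^d} \prod_{i=1}^d (1-\xi_i^2)^{1/2}\psi_{k_i}^2(\xi_i) \le \prod_{i=1}^d \sup_{\xi\in[-1,1]} (1-\xi^2)^{1/2}\psi_{k}^2(\xi)$ where the right-hand side uses the univariate bound for each coordinate independently, with the total-order constraint only making the supremum smaller. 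Also note the normalizing constant $c$ plays no role here because $w = 1/(cG)$ and we are absorbing $c$ into the definition, or more precisely the weighted functions $w(\bm Y)\psi_k(\bm Y)$ are what appear in $\mu$; I would be careful to track whether the normalization $\mathbb{E}[w^2\psi_k^2]=1$ is enforced, but for the sup-bound the cleanest route is to bound $(1-\xi^2)^{1/2}\psi_k^2(\xi)$ directly.

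The key univariate estimate is the classical bound on normalized Legendre polynomials: if $L_k$ is the Legendre polynomial normalized so that $\mathbb{E}[L_k^2(\Xi)] = 1$ under the uniform measure on $[-1,1]$ (equivalently $\int_{-1}^1 L_k^2(\xi)\,d\xi/2 = 1$), then the weighted Chebyshev-type bound gives $\sup_{\xi\in[-1,1]} (1-\xi^2)^{1/2} L_k^2(\xi) \le 3$ for all $k\ge 0$; in fact $(1-\xi^2)^{1/2} L_k^2(\xi) \le \frac{2}{\pi}(2k+1)^{1/2}\cdot\frac{1}{(2k+1)^{1/2}}\cdot\text{const}$ — I would invoke the standard result (as used in \cite{RauhutWard} and available from \cite{Szego}, e.g. Szeg\H{o}'s Theorem 7.3.3) that $(1-\xi^2)^{1/4}|L_k(\xi)| \le \sqrt{3}$ uniformly. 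This is exactly the $p$-independent one-dimensional fact that makes Chebyshev sampling work. I would cite this bound, verify it reduces to $\sqrt 3$ for $k=0$ (where $L_0 \equiv 1$ and $(1-\xi^2)^{1/4}\le 1 \le \sqrt 3$) and $k=1$ as a sanity check, and then raise to the $d$-th power.

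Putting the pieces together: $\mu(\bm Y) \le \prod_{i=1}^d \sup_{\xi}(1-\xi^2)^{1/2}L_{k_i}^2(\xi) \le \prod_{i=1}^d 3 = 3^d$, which is (\ref{eqn:LegTransformedSample}). The argument is genuinely dimension-insensitive and order-insensitive because each factor is bounded by the absolute constant $3$, and the total-order budget $\|\bm k\|_1 \le p$ is only used implicitly (it is not even needed for the upper bound, since the univariate bound holds for every degree).

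The main obstacle — really the only nontrivial input — is establishing the sharp univariate weighted bound $\sup_{\xi\in[-1,1]}(1-\xi^2)^{1/4}|L_k(\xi)| \le \sqrt 3$ with the precise constant $\sqrt 3$ (rather than a worse absolute constant); this requires invoking the correct form of the classical Legendre asymptotic/envelope estimate with the right normalization convention, and care must be taken that our normalization $\mathbb{E}[L_k^2]=1$ matches the one in the cited bound. If one is willing to accept a larger constant $C$ in place of $3$, the tensorization step is immediate and the theorem follows with $C^d$; obtaining exactly $3^d$ is a matter of quoting the tight univariate constant. I would also briefly remark that, unlike the Hermite case, no truncation set $\mathcal S$ is needed since the Legendre basis and the weight are bounded on the compact domain $[-1,1]^d$, so definition (\ref{Eqn:CoherenceBounded}) applies directly with a finite supremum.
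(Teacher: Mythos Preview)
Your proposal is correct and follows essentially the same route as the paper: the paper's proof (in Section~\ref{subsec:JacProofNatSamp}) invokes the univariate bound $\sqrt{\pi}(1-\xi^2)^{1/4}|\psi_k(\xi)|\le\sqrt{(2k+1)/k}\le\sqrt{3}$ (their Lemma~\ref{lem:Legendre}, citing Szeg\H{o}'s Theorem~7.3.3 and Rauhut--Ward's Lemma~5.1) and then tensorizes to obtain $\mu(\bm Y)\le\prod_{i=1}^d\frac{2k_i+1}{k_i}\le 3^d$. Your treatment is in fact slightly more careful, since you explicitly handle the $k_i=0$ case (where the paper's written product would formally be $1/0$) and flag the normalization issue concerning the factor of $\sqrt{\pi}$ in the weight.
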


It is worthwhile highlighting that the combination of Theorems \ref{thm:NatSampleCoherenceLeg} and \ref{thm:TransformedSamplesLeg} suggests sampling Legendre polynomials by uniform distribution when $d>p$ and Chebyshev distribution when $d<p$. A similar observation has been made in \cite{Yan12}.

\subsection{Coherence-optimal Sampling}
\label{subsec:MCMC}

Here we consider taking $G(\bm{\xi})=B(\bm{\xi})$ in (\ref{eqn:optimal_pdf_gen}), which implies sampling $\bm{\xi}$ according to the distribution
\begin{equation}
\label{eqn:optimal_pdf}
f_{\bm Y}(\bm \xi)=c^2f(\bm{\xi})B^2(\bm{\xi}),
\end{equation}
with some appropriate normalizing constant $c$. Corresponding to this sampling, we apply the weight function
\begin{align*}
w(\bm{\xi})=\frac{1}{B(\bm{\xi})}.
\end{align*}
Notice that in (\ref{eqn:optimal_pdf}), $f(\bm{\xi})$ is the measure with respect to which the polynomials $\psi_k(\bm\xi)$ are naturally orthogonal. 

\subsubsection{MCMC Sampling Method}
\label{subsubsec:MCMCmethod}

While $B(\bm{\xi})$, as defined in (\ref{eqn:btspec}), is straightforward to evaluate for a fixed $\bm{\xi}$ by iterating over each $k=1:P$, the quantity is difficult to evaluate over a range of $\bm{\xi}$, thus making it difficult to accurately compute the normalizing constant $c$ in (\ref{eqn:optimal_pdf}). This motivates sampling $\bm{\Xi}$ from (\ref{eqn:optimal_pdf}) via a Monte Carlo Markov Chain (MCMC) approach, specifically using the Metropolis-Hastings sampler,~\cite{Hastings}. The MCMC method uses the computable point-wise evaluation of $B(\bm{\xi})$, and does not require an identification of $c$ necessary to normalize to a probability distribution. Additionally, this sampling distribution allows the easy evaluation of $w(\bm{\xi})$ using only the realized sample.

The MCMC sampler requires a proposal, or candidate, distribution and when $p>d$ we suggest those obtained from Section~\ref{subsec:asym}, giving a uniform sampling on a $d$-dimensional ball for Hermite polynomials, and $d$-dimensional Chebyshev sampling for Legendre polynomials. Similarly, when $p\le d$ we suggest those obtained from Section~\ref{subsec:std}, giving a standard normal sampling for Hermite polynomials, and sampling uniformly for $[-1,1]^d$ for Legendre polynomials. We follow these proposal distributions for the sampling which we do in this work. Note that each proposal distribution covers the entire domain $\mathcal{S}$, and if the proposal and target distribution approximately match, then the acceptance rate is high and few burn-in samples are needed to approximately draw from the desired distribution for $\bm{Y}$. There is interest in identifying better proposal distributions, to be studied further. One caveat which we note is that the proofs of Theorems~\ref{thm:SampleDepth} and~\ref{thm:SampleDepthNoise} require independent sampling, so that it is proper to restart a chain after each 
accepted sample, but a more practical method is to discard intermediate samples so that serial dependence is small,~\cite{MCMCBook}. We note that in applications where evaluation of the QoI is expensive, the generation of the samples, $\{\bm{\xi}^{(i)}\}_{i=1}^N$, is not typically a bottleneck, so that the extra cost of MCMC sampling is frequently acceptable in practice.

\subsubsection{Theorem}
\label{subsubsec:MCMCTheorem}

Theorem~\ref{thm:LowPTransformedSamples} justifies the intuition that taking $G(\bm{\xi})$ associated with sampling to be the envelope function $B(\bm{\xi})$ leads to a minimal $\mu(\bm{Y})$.

\begin{thm}
\label{thm:LowPTransformedSamples} 
Let $\mathcal{S}$ be a set chosen to satisfy the conditions of (\ref{Eqn:CoherenceUnbounded}) implying that no subset $\mathcal{S}_s$ of $\mathcal{S}$ with $\mu(\mathcal{S}_s)<\mu(\mathcal{S})$ satisfies the conditions of (\ref{Eqn:CoherenceUnbounded}).
Let $B(\bm{\xi})$ be as in (\ref{eqn:btspec}). If we sample from the distribution proportional to $f(\bm{\xi})B^2(\bm{\xi})$ and weight $\psi_k(\bm{\xi})$ proportional to $w(\bm\xi) =1/B(\bm{\xi})$, then the coherence parameter achieves a minimum over all sampling schemes of $\psi_{k}(\bm{\xi})$, $k=1:P$, and distributions supported on $\mathcal{S}$.
\end{thm}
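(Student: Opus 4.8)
The plan is to show that the weight choice $w(\bm\xi)=1/B(\bm\xi)$ together with the sampling density $f_{\bm Y}(\bm\xi)\propto f(\bm\xi)B^2(\bm\xi)$ makes the coherence integrand constant in $\bm\xi$, and then argue that any other admissible pair (sampling density $g$ supported on $\mathcal S$, and weight $\tilde w$ making $\{\tilde w\psi_k\}$ approximately orthonormal with respect to $g$) cannot do better. First I would write out the coherence for a generic scheme: if we sample from density $g(\bm\xi)$ on $\mathcal S$ and use weight $\tilde w(\bm\xi)$, then the approximate-orthonormality constraint (\ref{Eqn:TransformIntegral})–(\ref{Eqn:CoherenceUnbounded}) forces $\int_{\mathcal S}\tilde w^2(\bm\xi)\psi_k^2(\bm\xi)g(\bm\xi)\,d\bm\xi\approx 1$ for each $k=1:P$. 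Summing over $k$ and interchanging sum and integral gives $\int_{\mathcal S}\tilde w^2(\bm\xi)g(\bm\xi)\sum_{k=1}^P\psi_k^2(\bm\xi)\,d\bm\xi\approx P$, hence $\int_{\mathcal S}\tilde w^2(\bm\xi)g(\bm\xi)B^2(\bm\xi)\,d\bm\xi\ge \int_{\mathcal S}\tilde w^2(\bm\xi)g(\bm\xi)\max_k\psi_k^2(\bm\xi)\,d\bm\xi$, and since $\sum_k\psi_k^2\ge \max_k\psi_k^2 = B^2$ this quantity is at least $1$ (roughly, up to the $\epsilon_{i,j}$ slack).

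Next I would lower-bound the coherence. For any scheme, $\mu = \sup_{k,\bm\xi\in\mathcal S}\tilde w^2(\bm\xi)\psi_k^2(\bm\xi)\ge \sup_{\bm\xi\in\mathcal S}\tilde w^2(\bm\xi)B^2(\bm\xi)$. Combined with the integral identity above, $\sup_{\bm\xi\in\mathcal S}\tilde w^2(\bm\xi)B^2(\bm\xi)\ge \int_{\mathcal S}\tilde w^2(\bm\xi)B^2(\bm\xi)g(\bm\xi)\,d\bm\xi = P\cdot\big(\text{weighted average of }B^2\tilde w^2\text{ vs }\sum_k\psi_k^2\tilde w^2\big)$; more carefully, $\sup_{\bm\xi}\tilde w^2 B^2 \ge \int \tilde w^2 B^2 g$, and we want to compare this to the value achieved by the coherence-optimal choice. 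For the coherence-optimal choice, $w^2 B^2 = c^{-2}/(\text{stuff})$ — actually $w(\bm\xi)^2B(\bm\xi)^2 = 1$ identically on $\mathcal S$ is false; rather $w^2\psi_k^2 = \psi_k^2/B^2\le 1$, so $\mu(\bm Y)=\sup_{k,\bm\xi}\psi_k^2(\bm\xi)/B^2(\bm\xi) = 1$ whenever some $\psi_k$ attains the max $B$ at every relevant $\bm\xi$; in general $\mu(\bm Y)=\sup_{\bm\xi\in\mathcal S}\max_k \psi_k^2(\bm\xi)/B^2(\bm\xi)=1$ by definition of $B$. So for the coherence-optimal scheme $\mu=1$ (up to the truncation slack from passing to $\mathcal S$), while the displayed inequalities show $\mu\ge 1$ for every admissible scheme. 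I would then verify that the optimal $\mathcal S$ in the hypothesis is consistent, i.e. that the minimality-of-$\mathcal S$ assumption is exactly what prevents shrinking $\mathcal S$ to cheat the supremum while still meeting the tail conditions of (\ref{Eqn:CoherenceUnbounded}).

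The main obstacle I anticipate is making the "approximately" precise: the orthonormality in (\ref{Eqn:TransformIntegral}) and (\ref{Eqn:CoherenceUnbounded}) is only approximate (controlled by $\epsilon_{i,j}$ and the two tail conditions), so the clean identity $\int \tilde w^2\psi_k^2 g = 1$ becomes $1-\epsilon$-ish, and the interchange of $\sum_{k=1}^P$ with the integral must account for the indicator-function tail terms $\sum_k\mathbb E[|\tilde w\psi_k|^2\bm 1_{\mathcal S^c}]\le \frac1{20}P^{-1/2}$. I would handle this by carrying the error terms through the chain of inequalities and showing they contribute lower-order corrections, so that the conclusion is that the coherence-optimal scheme minimizes $\mu(\bm Y)$ up to these controlled slacks — which is the sense in which Theorem~\ref{thm:LowPTransformedSamples} should be read. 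A secondary subtlety is that "all sampling schemes of $\psi_k(\bm\xi)$ supported on $\mathcal S$" must be interpreted as all pairs $(g,\tilde w)$ with $g$ a density on $\mathcal S$ and $\tilde w>0$ satisfying the approximate-orthonormality constraints; I would state this interpretation explicitly at the start of the proof so the optimality claim is unambiguous.
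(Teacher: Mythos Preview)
Your averaging idea is headed in the right direction, but there is a genuine gap: you have dropped the normalizing constant from the weight. In the paper's setup (equation~(\ref{eqn:weightFunction})) the weight is $w(\bm\xi)=1/(cG(\bm\xi))$ with $c^{-2}=\int_{\mathcal S}f(\bm\xi)G^2(\bm\xi)\,d\bm\xi$, so for the coherence-optimal choice $G=B$ one obtains
\[
\mu(\bm Y)\;=\;\sup_{k,\;\bm\xi\in\mathcal S}\frac{|\psi_k(\bm\xi)|^2}{c^2B^2(\bm\xi)}\;=\;c^{-2}\;=\;\int_{\mathcal S}f(\bm\xi)B^2(\bm\xi)\,d\bm\xi,
\]
and this is generically strictly larger than $1$. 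Your lower bound $\mu\ge 1$, coming from $\int \tilde w^2 B^2 g\ge \int \tilde w^2\psi_k^2 g\approx 1$, is therefore too weak: it does not exclude a competing scheme with coherence strictly between $1$ and $c^{-2}$, so the optimality claim does not follow. (The detour through $\sum_k\psi_k^2\approx P$ does not help either, since $\sum_k\psi_k^2\ge B^2$ points the wrong way for a lower bound on $\int\tilde w^2 B^2 g$.)

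The repair is short once you recognize how the competing schemes are parametrized. In the paper's framework a scheme is specified by a function $\hat B$: one samples from $\hat c^{\,2} f\hat B^2$ on $\mathcal S$ and uses weight $1/(\hat c\hat B)$, with $\hat c^{-2}=\int_{\mathcal S}f\hat B^2$. For every such scheme one has $\tilde w^2 g=f$ identically on $\mathcal S$, so your integral becomes
\[
\int_{\mathcal S}\tilde w^2 B^2 g \;=\; \int_{\mathcal S} f\,B^2 \;=\; c^{-2},
\]
and your chain $\mu=\sup_{\bm\xi}\tilde w^2 B^2\ge\int_{\mathcal S}\tilde w^2 B^2 g=c^{-2}$ then gives exactly the sharp lower bound, with equality precisely when $\tilde w^2 B^2$ is constant in $\bm\xi$, i.e.\ when $\hat B\propto B$. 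This is the same content as the paper's proof, which phrases it contrapositively: the optimal scheme achieves $\sup_k|\psi_k|/(cB)=c^{-1}$ at \emph{every} point of $\mathcal S$, and the normalization constraint $\int_{\mathcal S}f(\hat c\hat B)^2=1=\int_{\mathcal S}f(cB)^2$ forces any $\hat c\hat B$ differing from $cB$ on a set of positive $\nu$-measure to dip strictly below $cB$ on some set $\mathcal A_\star$, whence $B/(\hat c\hat B)>c^{-1}$ there. Once you restore the constant $c$, your sup-versus-average packaging of this argument is arguably cleaner than the paper's.
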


In the next section we explore how the sample distributions associated with these results perform when used to approximate sparse functions in the appropriate PC basis.

\section{Numerical Examples}
\label{sec:examples}

Here we numerically investigate the different sampling schemes discussed in Section~\ref{sec:sampling}, considering the coherence parameter in Section \ref{subsec:ComputedCoherence}, randomly generated manufactured sparse functions in Section~\ref{subsec:RandomSignals}, the solution to an elliptic PDE with random coefficient in Section~\ref{subsec:Elliptic}, and the amount of reaction at a given time in an adsorption model from \cite{Makeev02} in Section~\ref{subsec:EllipticOperator}.
\subsection{Computed Coherence}
\label{subsec:ComputedCoherence}
The coherence parameter of Section~\ref{subsubsec:Coherence} can be estimated from a large sample of realized points. Doing so leads to the results in Figure~\ref{Fig:HermCoh} for Hermite polynomials, and those in Figure~\ref{Fig:LegCoh} for Legendre polynomials. We consider three sampling schemes, the standard scheme where we sample based on the underlying distribution of random variables in question as in Section~\ref{subsubsec:stdmethod}; an asymptotically motivated method to insure a coherence with weaker dependence on the order $p$ as in Section~\ref{subsubsec:asymmethod}; and a coherence-optimal sampling based on the distribution proportional to the envelope of basis functions as in Section~\ref{subsubsec:MCMCmethod}. We see that standard sampling tends to perform poorly at high orders, while asymptotic sampling tend to perform poorly for high-dimensional problems. Additionally, coherence-optimal sampling performs well in all regimes. These observations are consistent with the theoretical results 
presented in Section \ref{sec:sampling}.

\begin{figure}[h!]
\centering
\includegraphics[trim=15mm 10mm 10mm 10mm, clip, scale=0.35]{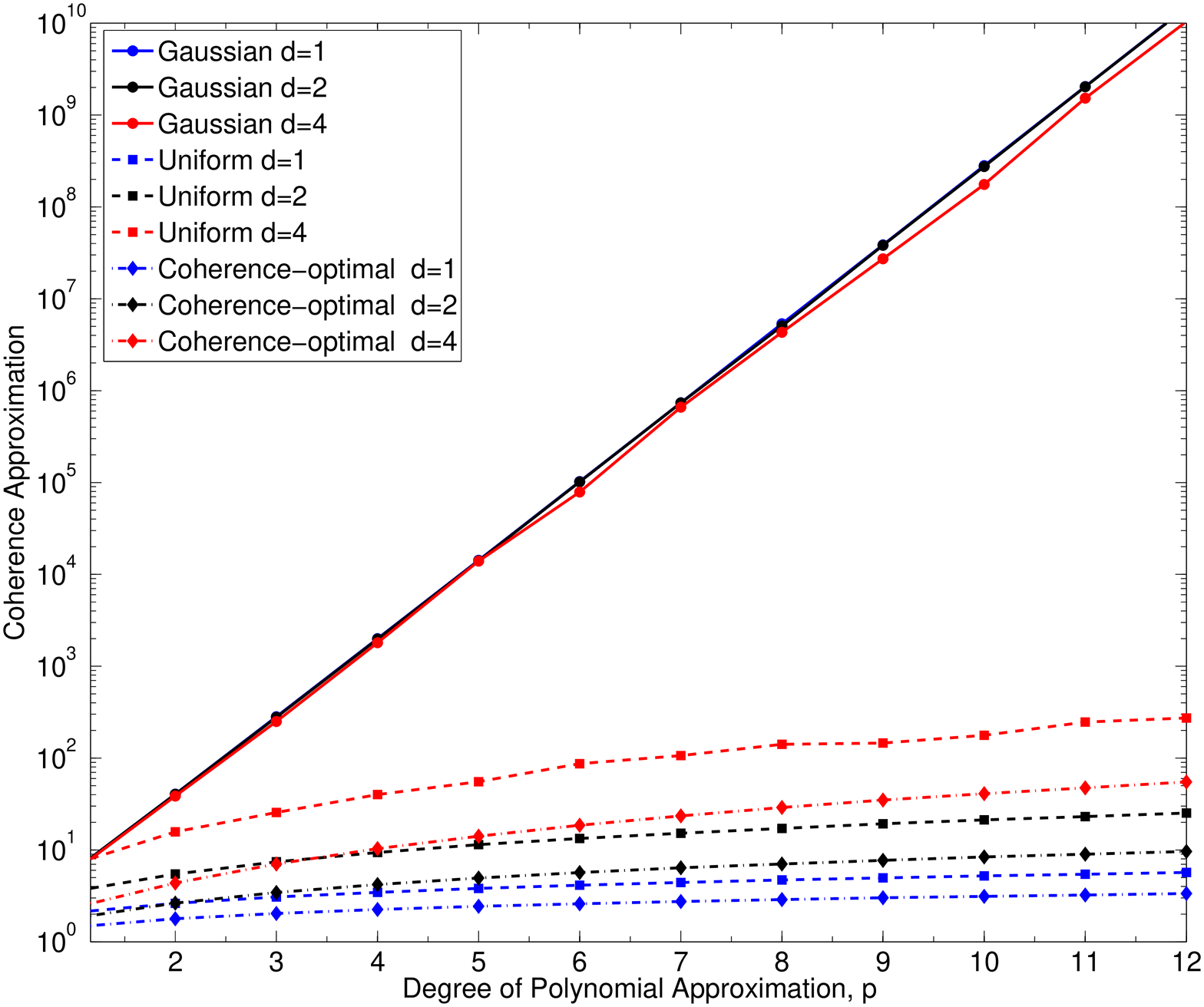}
\caption{Computed $\mu(\bm{Y})$ for different sampling methods of Hermite polynomials for different $d$ and $p$.}
\label{Fig:HermCoh}
\end{figure}
\begin{figure}[h!]
\centering
\includegraphics[trim=15mm 10mm 10mm 10mm, clip, scale=0.35]{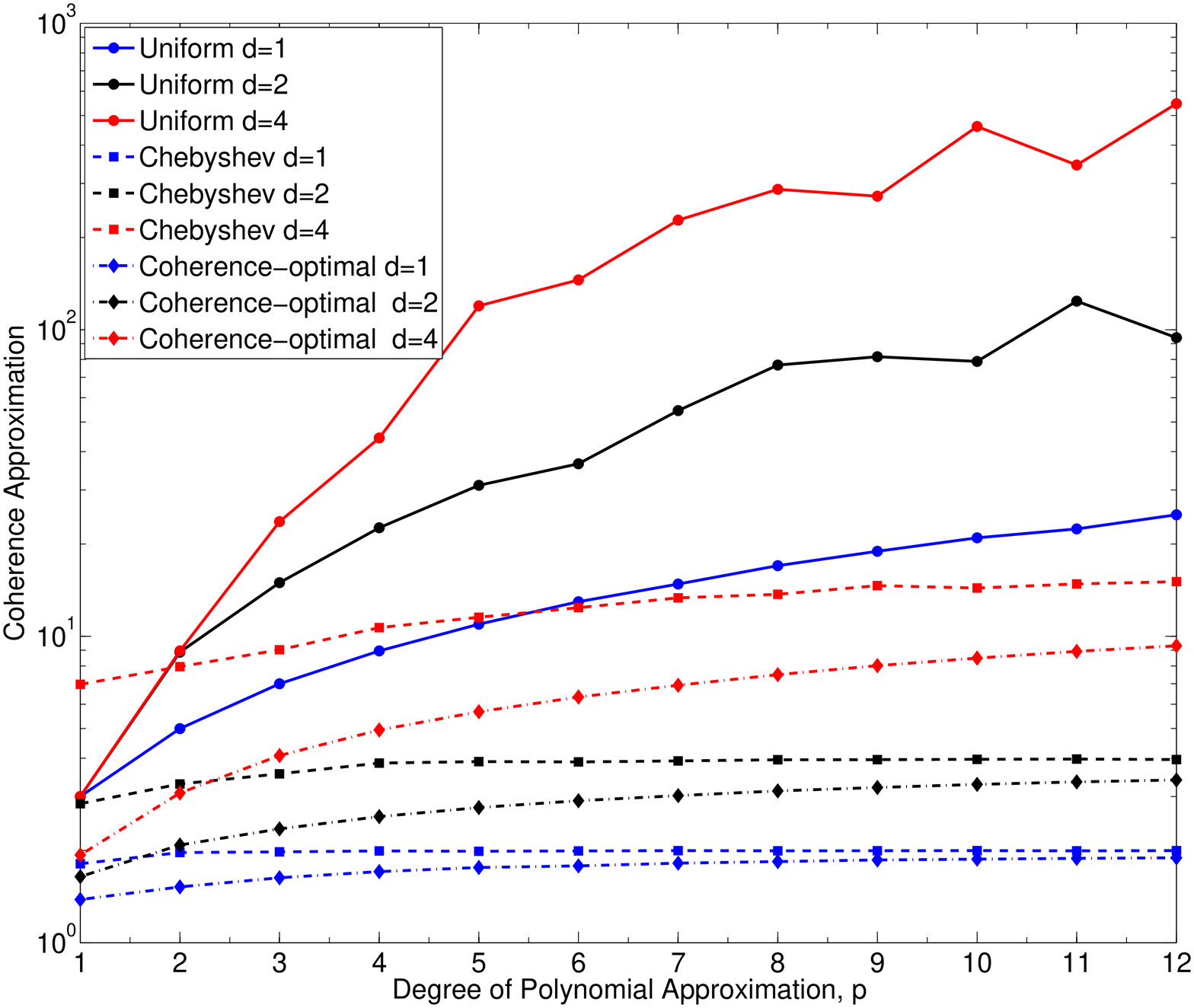}
\caption{Computed $\mu(\bm{Y})$ for different sampling methods of Legendre polynomials for different $d$ and $p$.}
\label{Fig:LegCoh}
\end{figure}
\subsection{Manufactured Sparse Functions}
\label{subsec:RandomSignals}

In this section, we investigate the reconstruction accuracy of the competing sampling schemes on randomly generated sparse solution vectors, $\bm{c}$, such that $\bm{\Psi}\bm{c}=\bm{u}$. Here, $\bm{c}$ is chosen to have a uniformly selected random support and independent standard normal random variables for {\color{black}values of} each supported coordinate. We measure reconstruction accuracy as a function of sparsity, denoted by $s$, and the number of independent samples of $\bm{Y}$, denoted by $N$. We declare $\hat{\bm{c}}$ to be a successful recovery of $\bm{c}$ if $\|\hat{\bm{c}}-\bm{c}\|_2/\|\bm{c}\|_2\le 0.01$, where $\hat{\bm{c}}$ is a solution to (\ref{eqn:constrained}) and in this work is computed using the $\ell_1$-minimization solver of SparseLab~\cite{SparseLab}, which is based on a primal-dual interior-point method. Each success probability is calculated from 2500 independent realizations of $\bm\Psi$ and $\bm c$.

For a more comparable presentation, we normalize the number of samples by the number of basis functions considered, $N/P\in[0.1,1]$, and similarly normalize the sparsity by the number of samples, $s/N\in[0.1,1]$. To compare the ability to recover solutions, we identify the probability of recovery on a $90\times 90$ uniform grid in $(N/P,s/N)$ for different $(d,p)$ pairs as well as for different distributions of $\bm{Y}$. The results are presented in Figures~\ref{Fig:HermRand} and~\ref{Fig:LegRand} for Hermite and Legendre polynomials, respectively, where we consider three sampling schemes of Sections~\ref{subsubsec:stdmethod},~\ref{subsubsec:asymmethod}, and~\ref{subsubsec:MCMCmethod}. For the coherence-optimal sampling, in conjunction with Theorem~\ref{thm:LowPTransformedSamples}, we use a Metropolis-Hastings sampling to generate realizations from the appropriate distribution, where we discard 99 samples before every one kept, which both provides a burn-in effect and reduces the serial correlation between 
samples. 

The results in Figures~\ref{Fig:HermRand} and~\ref{Fig:LegRand} identify a {\it phase transition},~\cite{Donoho09b}, in the ability of $\ell_1$-minimization to recover $\bm c$. For a number of solution realizations, given by $N/P$, the method succeeds -- with probability one  -- in reconstructing solutions with high enough sparsity, given by small $s/N$, and fails to do so for low sparsity. Between these two phases, the method recovers the solution with probability smaller than one. Here, we observe differentiation in the quality of solution recovery in the transition region based on how $\bm{\Psi}$ is sampled. In particular, we highlight the following notable observations: for the high order case $(d,p)=(2,30)$, the standard Hermite sampling performs poorly as compared to the uniform sampling, for the high-dimensional case $(d,p)=(30,2)$, the standard Legendre sampling is much better than the Chebyshev sampling, and for the moderate values of $(d,p)=(5,5)$ the two sampling methods lead to similar 
performance. In all cases, the MCMC sampling leads to recovery that is similar to those of the other two sampling strategies or provides considerable improvements.\\

\noindent{\bf Remark:} Though our results following from~\cite{CandesPlan} do not necessarily imply uniform recovery over all functions of a certain sparsity, the results in this section are appropriately interpreted in the context of uniform recovery. For a more detailed definition of uniform recovery, we refer the interested reader to \cite{Rauhut10}.

\begin{figure}[h!]
\centering
\includegraphics[trim=47mm 0mm 0mm 10mm, clip, scale=0.5]{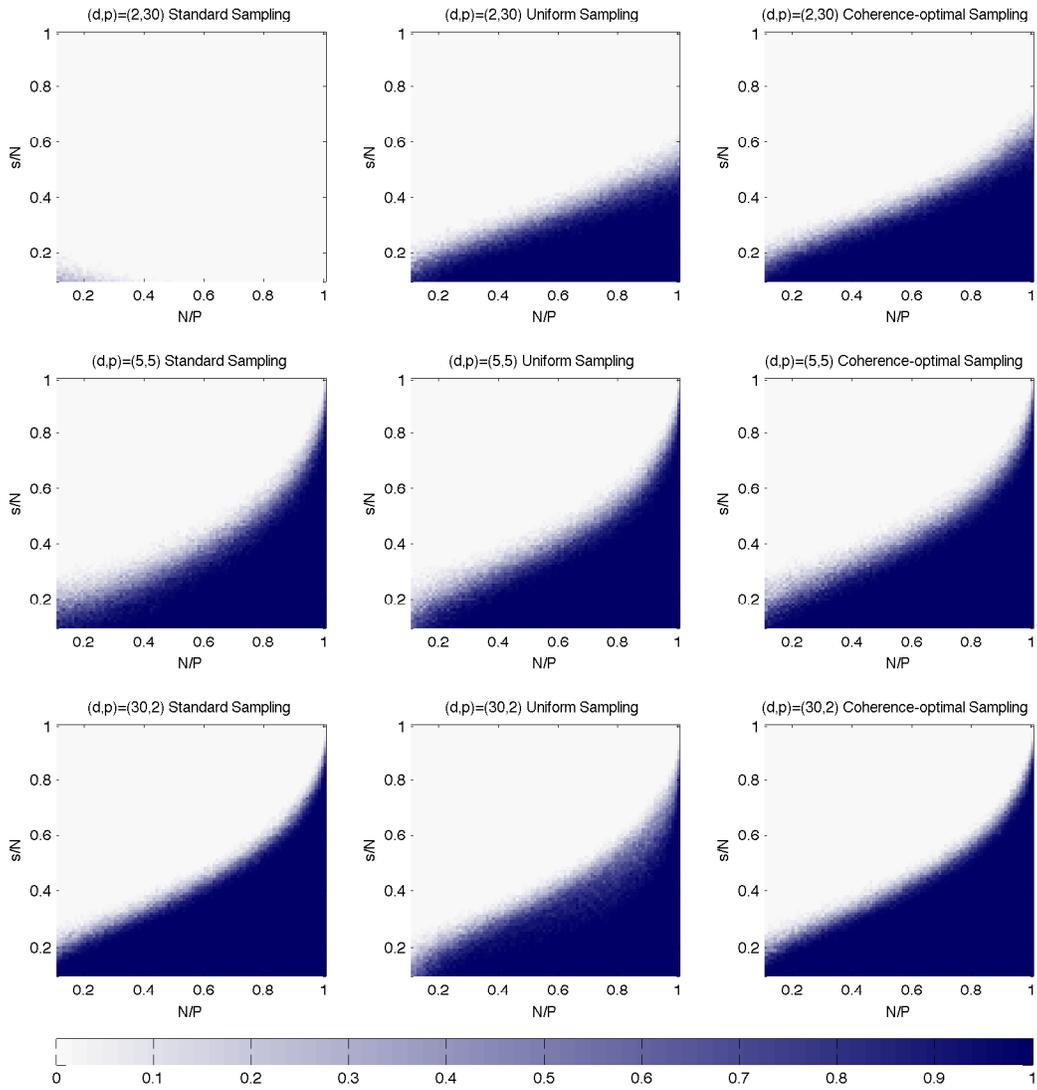}
\caption{Hermite Recovery Phase Diagrams: The rows correspond to differing dimension and total order while the columns correspond to the different sampling schemes. The color of each square represents a probability of successful function recovery.}
\label{Fig:HermRand}
\end{figure}

\begin{figure}[h!]
\centering
\includegraphics[trim=47mm 0mm 0mm 10mm, clip, scale=0.5]{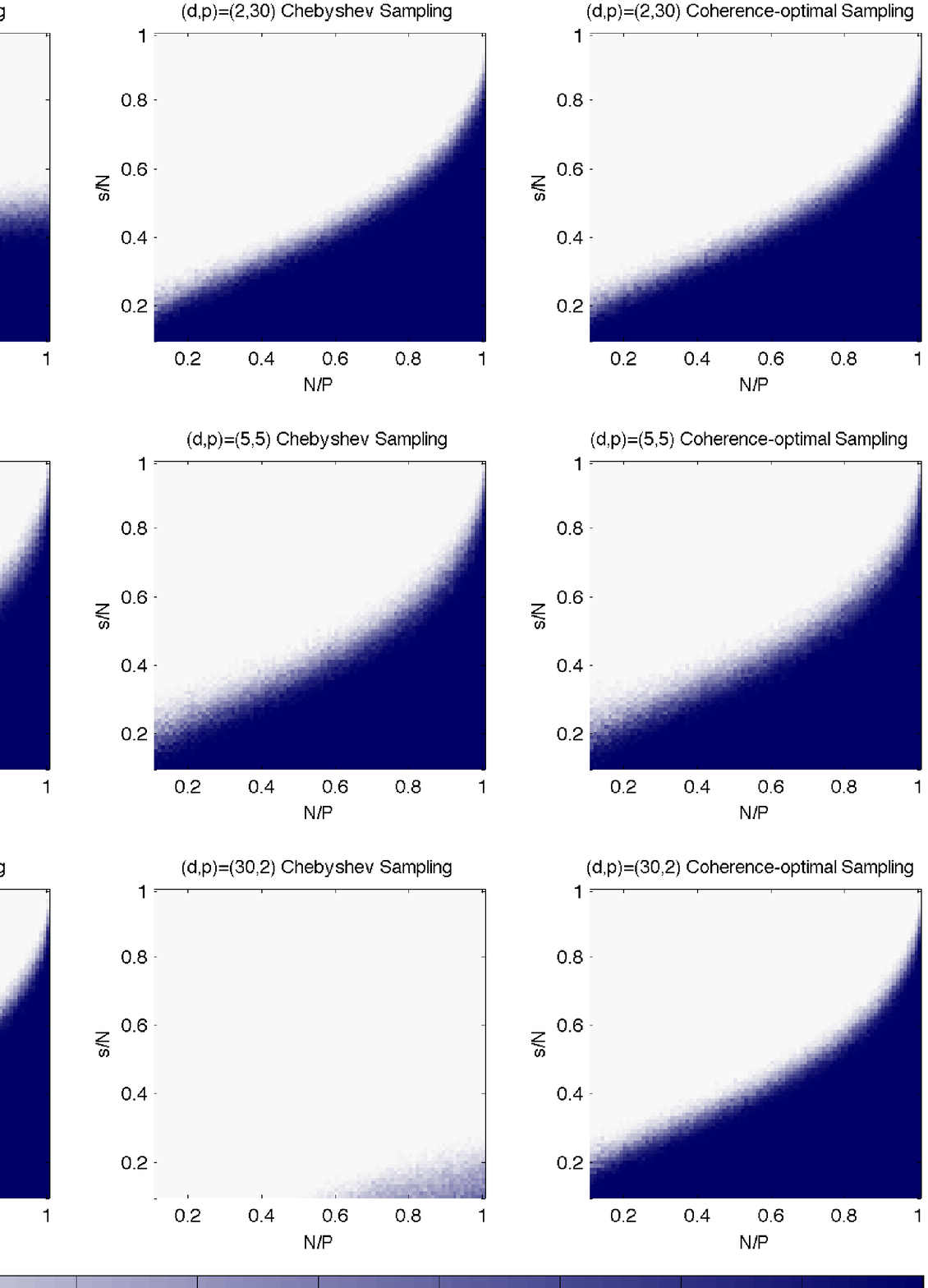}
\caption{Legendre Recovery Phase Diagrams: The rows correspond to differing dimension and total order while the columns correspond to the different sampling schemes. The color of each square represents a probability of successful function recovery.}
\label{Fig:LegRand}
\end{figure}

\subsection{An Elliptic PDE with Random Input}
\label{subsec:Elliptic}

As an application of Legendre PC expansions, we next consider the solution of the linear elliptic PDE 
\begin{eqnarray}
\label{Eqn:EllOperator}
\nabla\cdot(a(\bm{x},\bm{\Xi})\nabla u(\bm{x},\bm{\Xi}))&=&1,\quad \bm{x}\in\mathcal{D},\nonumber\\
u(\bm{x},\bm{\Xi})&=&0 ,\quad \bm{x}\in\partial\mathcal{D},\nonumber
\end{eqnarray}
on the unit square $\mathcal{D}= (0,1)\times(0,1)$ with boundary $\partial\mathcal{D}$. The diffusion coefficient $a$ is considered random and is modeled by 
\begin{align}
\label{eqn:gaussian_field}
a(\bm{x},\bm{\Xi})=a_0 +\sigma_a\sum_{k=1}^{d}\sqrt{\zeta_k}\varphi_{k}(\bm{x})\Xi_{k},
\end{align}
in which the random variables $\{\Xi_{k}\}_{k=1}^d$, $d=20$, are independent draws from the U([-1,1]) distribution, and we choose $a_0 = 0.1$ and $\sigma_a=0.017$. In (\ref{eqn:gaussian_field}), $\{\zeta_k\}_{k=1}^d$ are the $d$ largest eigenvalues associated with $\{\varphi_k\}_{k=1}^d$, the $L_2([0,1]^2)$-orthonormalized eigenfunctions of
\begin{align}
\label{eqn:GaussCov}
C_{aa}(\bm{x},\bm{y}) = \exp{\left[-\frac{(x_1-y_1)^2}{l_{1}^2}-\frac{(x_2-y_2)^2}{l_{2}^2}\right]}
\end{align}
with correlation lengths $l_{1}=0.8, l_{2}=0.1$ in the spatial dimensions. Given these choices of parameters, the model in (\ref{eqn:gaussian_field}) leads to strictly positive realizations of $a$.

For any realization of $\bm{\Xi}$, we use the finite element solver FEniCS~\cite{FEniCS} to compute an approximate solution $u(\bm{\Xi})=u((0.5,0.5),\bm{\Xi})$.

To identify $u(\bm{\Xi})$ as a function of the random inputs $\bm{\Xi}$, we use a Legendre PC expansion of total order $p=4$, which for this $d=20$ stochastic dimensional problem yields $P=10,626$ basis functions. We note that the root-mean-squared error is considered here as the primary measure of recovery.

We investigate the ability to recover $u(\bm{\Xi})$ via (\ref{eqn:constrained}), using each of the three sampling schemes considered for Legendre polynomials. For this elliptic problem we further improve the quality of the MCMC sampling through an initial burn-in of 1,000 discarded samples~\cite{MCMCBook}. We provide bootstrapped estimates of the various moment based measures from a pool of samples generated beforehand. Specifically, samples for each realization are drawn from a pool of 50,000 previously generated samples, which are used to calculate bootstrap estimates of averages and standard deviations.

To identify the solution of (\ref{eqn:constrained}) we use the SPGL1 package,~\cite{SPGL1,SPGL2}, with a truncation error in (\ref{eqn:constrained}), denoted by $\delta$, and determined for each set of samples by two-fold, also known as hold-out, cross-validation,~\cite{CrossValidation}. Specifically, we calculate this $\delta$ from $N$, an even number of available samples, by splitting the available samples into two equally sized sets, one a training set, and the other a validation set. This process, as we have implemented it, is summarized by the following algorithm,

\begin{enumerate}
\item For a number of $\delta$, construct solutions, $\bm{c}_\delta$, from the training set and the solution of (\ref{eqn:constrained}). We use a set of potential $\delta$ defined by $10^{-(-1:0.05:5)}$.
\item For each $\bm{c}_\delta$ use the validation set to identify the reconstruction error $\epsilon^{(1)}_\delta:= \|\bm{\Psi}\bm{c}_\delta-\bm{u}\|_2$.
\item Repeat with the training and validation sets swapped to attained $\epsilon^{(2)}_\delta$.
\item Identify the $\delta_0$ that minimizes $\epsilon^{(1)}_\delta+\epsilon^{(2)}_\delta$.
\item Set the truncation error to $\delta_\star=\sqrt{2}\delta_0$, and identify a solution vector via the combined $N$ samples from both the training and validation sets.
\end{enumerate}

We utilize this method of cross-validation to calibrate the truncation error for each realized sample of the calculated solution to (\ref{eqn:constrained}). Here, a lower cross-validated truncation error suggests a computed solution vector with a more accurate recovery. 

In Figure~\ref{Fig:EllipticSD} we see plots of computed moments for the distribution of the relative root-mean-squared error between the computed and reference solutions obtained from 100 independent replications for each sample size, $N$. In addition, Figure~\ref{Fig:EllipticTol} presents similar plots for the truncation error computed with each sampling. We note here that the cross-validated computation of $\delta$ provides an available estimate for anticipated root-mean-squared error for additional independent samples.

We note the standard and coherence-optimal sampling offer significant improvements over asymptotic, i.e., Chebyshev, sampling using similar sample sizes, $N$, both in terms of accuracy and robustness to differing realized samples. These observations are compatible with the theoretical results of Section \ref{sec:sampling} demonstrating a smaller coherence for the uniform sampling -- as compared to Chebyshev sampling -- for the case of $d>p$. The coherence-optimal sampling by construction leads to smallest coherence. We also notice that at particularly low sample-sizes any given sampling method prefers to recover a particular but ultimately poor approximation. As the number of samples increases the recovery can improve but the variability in the solution recovery will appear to increase first as this preferred solution is recovered less frequently.

\begin{figure}[h!]
\hspace{-1.2cm}
\begin{minipage}{.5\textwidth}
\centering
\includegraphics[scale=0.3]{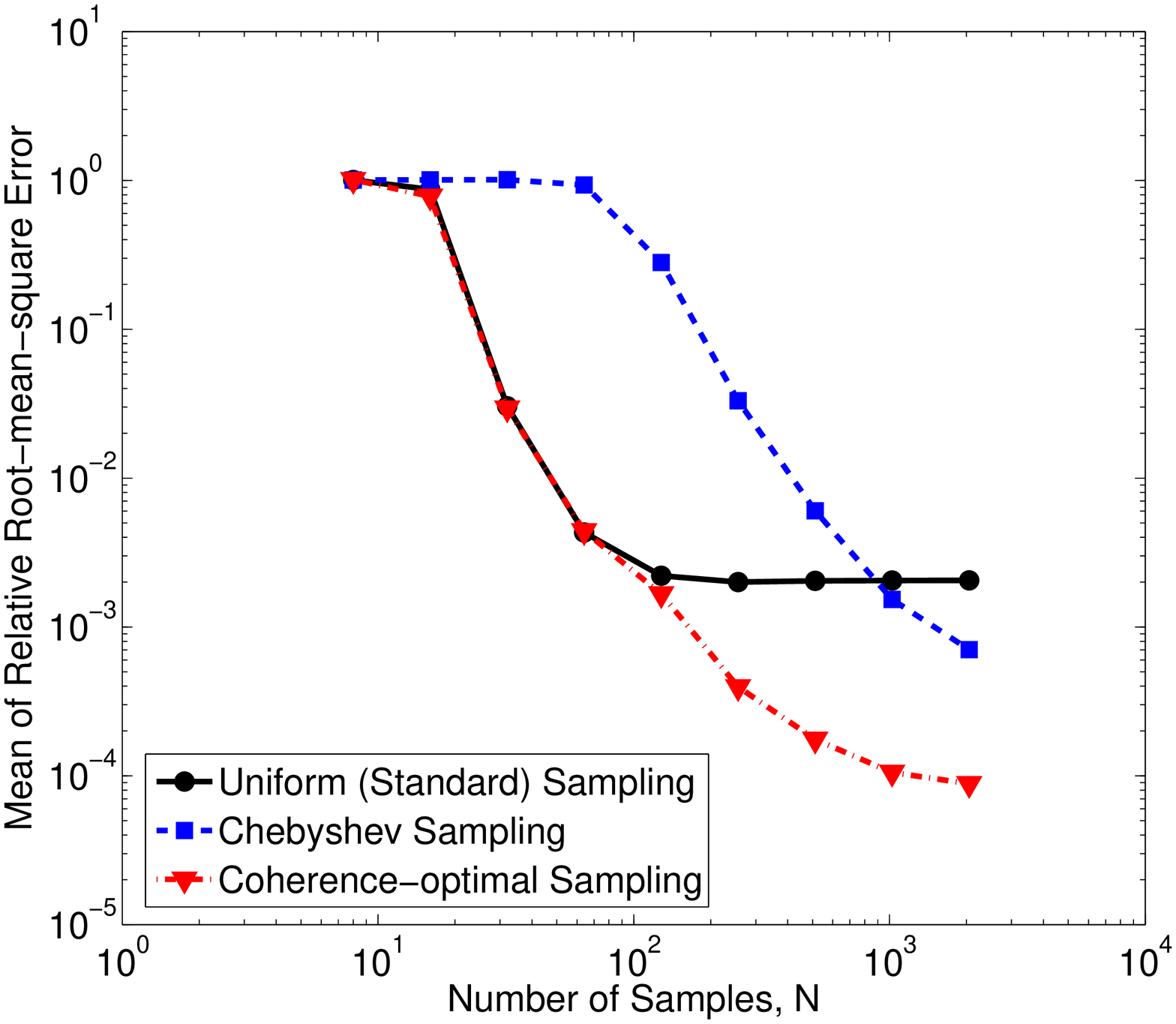}
\end{minipage}
\hspace{.6cm}
\begin{minipage}{.5\textwidth}
\centering
\includegraphics[scale=0.3]{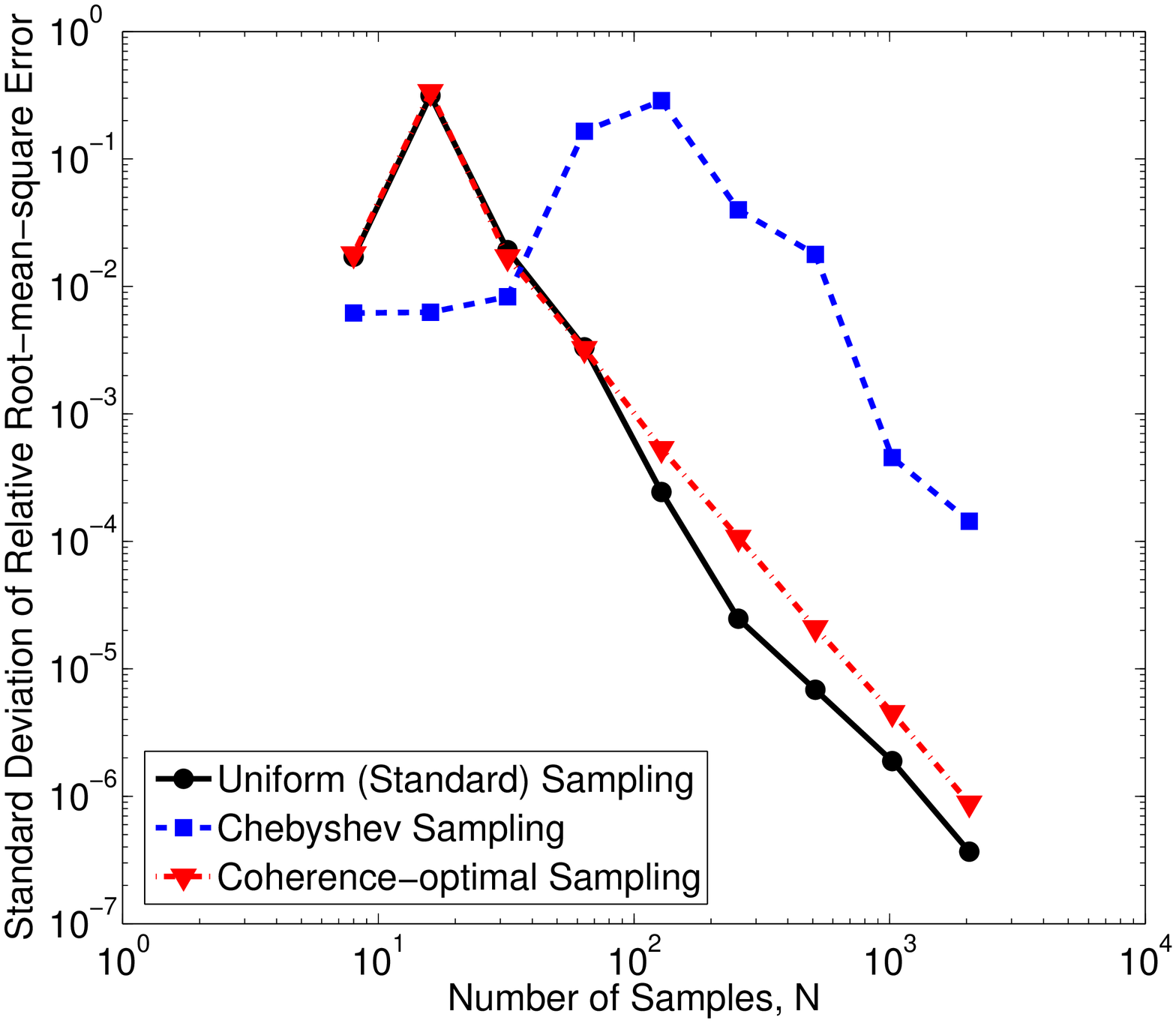}
\end{minipage}
\caption{Plots for the moments of root-mean-squared error for independent residuals for the various sampling methods as a function of the number of samples.}
\label{Fig:EllipticSD}
\end{figure}

\begin{figure}[h!]
\hspace{-1.2cm}
\begin{minipage}{.5\textwidth}
\centering
\includegraphics[scale=0.3]{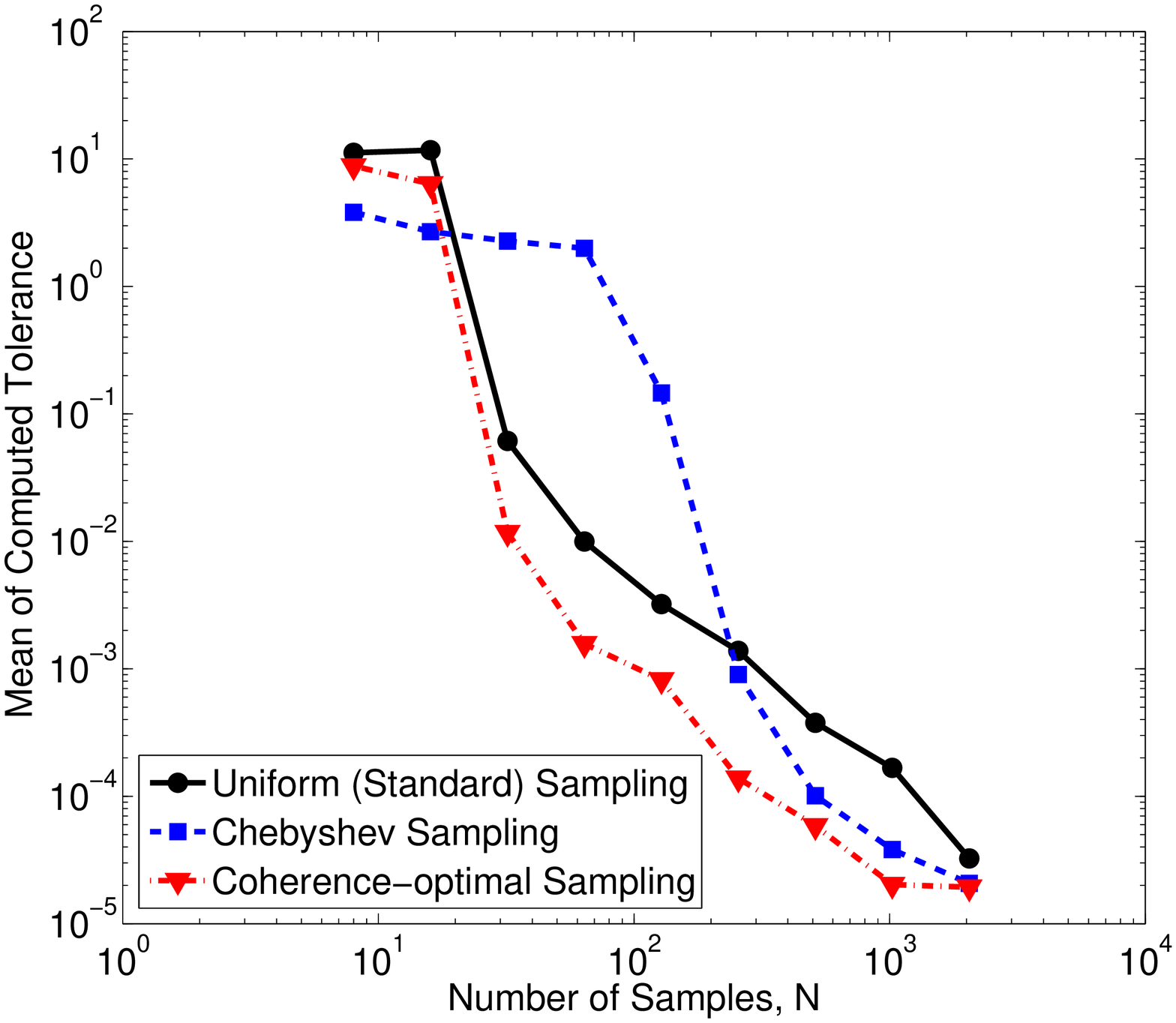}
\end{minipage}
\hspace{.6cm}
\begin{minipage}{.5\textwidth}
\centering
\includegraphics[scale=0.3]{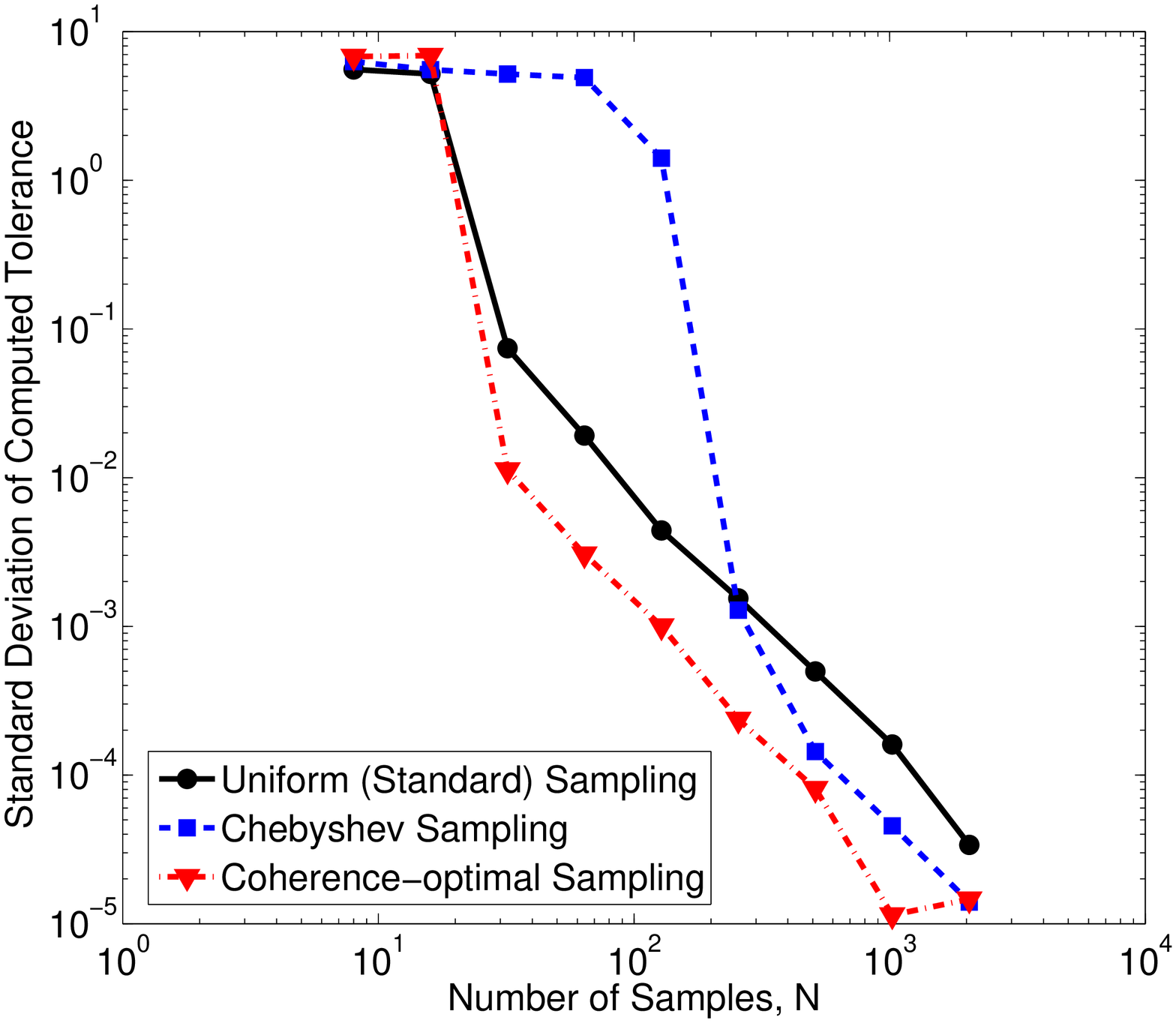}
\end{minipage}
\caption{Plots for the moments of cross-validated estimates of tolerance for the various sampling methods as a function of the number of samples.}
\label{Fig:EllipticTol}
\end{figure}

%
%
\subsection{Surface Reaction Model}
\label{subsec:EllipticOperator}

Another problem of interest in this work is to quantify the uncertainty in the solution $\rho$ of the non-linear evolution equation  
\begin{align}
\label{eqn:reaction}
\left\{\begin{array}{l}\frac{d\rho}{dt} = \alpha(1-\rho) - \gamma\rho - \kappa(1-\rho)^2\rho, \\ 
\rho(t=0) = 0.9,\end{array}\right.
\end{align}
modeling the surface coverage of certain chemical species, as examined in \cite{Makeev02,Lemaitre04b}. We consider uncertainty in the adsorption, $\alpha$, and desorption, $\gamma$, coefficients, and model them as shifted log-normal variables. Specifically, we assume
\begin{align*}
\alpha &= 0.1 + \exp(0.05\ \Xi_1),\\
\gamma &= 0.001 + 0.01\exp(0.05\ \Xi_2),
\end{align*}
where $\Xi_1, \Xi_2$ are independent standard normal random variables; hence, the dimension of our random input is $d=2$. The reaction rate constant $\kappa$ in (\ref{eqn:reaction}) is assumed to be deterministic and is set to $\kappa = 10$. 

Our QoI is $\rho_c:=\rho(t=4,\Xi_1,\Xi_2)$, and to approximate this, we consider a Hermite PC expansion of total order $p=32$, giving $P=561$ basis functions. This high-order approximation is necessary due to the large gradient of $\rho_c$ in terms of the random variables, as evidenced by the relatively slow decay of coefficients in the reference solution presented in Figure~\ref{Fig:RefSolODE}. This is computed using Gauss-Hermite quadrature approximation of the PC coefficients.  

We utilize the same computational process as in Section~\ref{subsec:Elliptic} to identify approximate solutions. In Figure~\ref{Fig:ODERMSE}, we see plots of moments for the relative root-mean-squared error -- between the reference and $\ell_1$-minimization solutions -- as a function of the number of samples, $N$. These moments are obtained from 200 independent replications for each $N$. We find that the standard sampling fails to converge, while the {\color{black}uniform} and coherence-optimal samplings lead to converged solution as $N$ is increased. Figure~\ref{Fig:ODETol} presents plots for the truncation error computed with each sampling via cross-validation.  One interesting fact to notice is that recovery for standard sampling appears to get worse for larger sample sizes. This may be an effect of the poor numerical conditioning of high order $p=30$ Hermite polynomials under standard sampling, where very rare events with very large realized $|\psi_k(\bm{\xi})|$ are necessary to capture the orthogonality of the polynomials. It further affirms the results of Figure~\ref{Fig:HermCoh} and Theorem~\ref{thm:NatSampleCoherence}, that standard sampling of Hermite polynomials is not suited for high-order problems.

\begin{figure}[h!]
\centering
\includegraphics[scale=0.6]{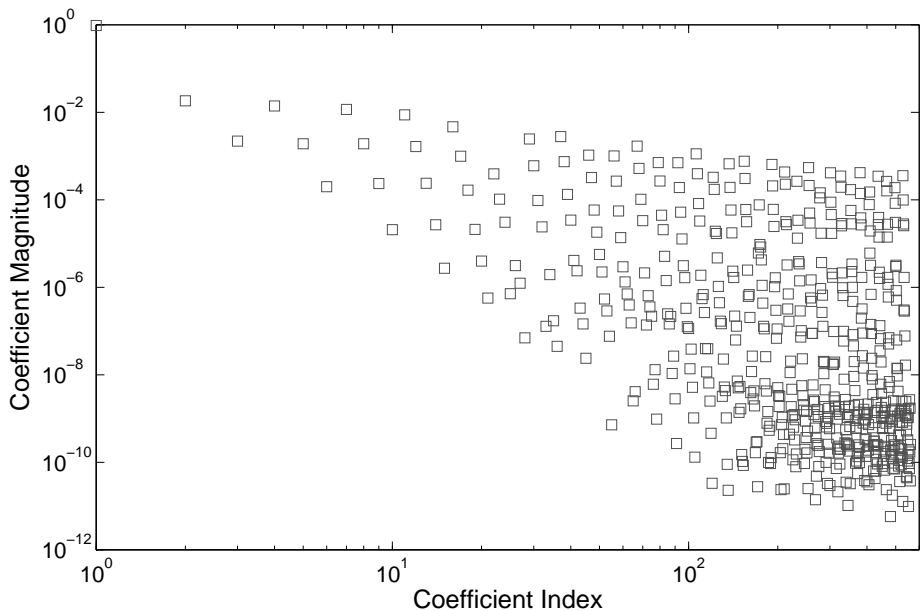}
\caption{PC coefficients of reference solution for the QoI $\rho_c:=\rho(t=4,\Xi_1,\Xi_2)$ in the surface reaction model.}
\label{Fig:RefSolODE}
\end{figure}

\begin{figure}[h!]
\hspace{-1.1cm}
\begin{minipage}{.5\textwidth}
\centering
\includegraphics[scale=0.32]{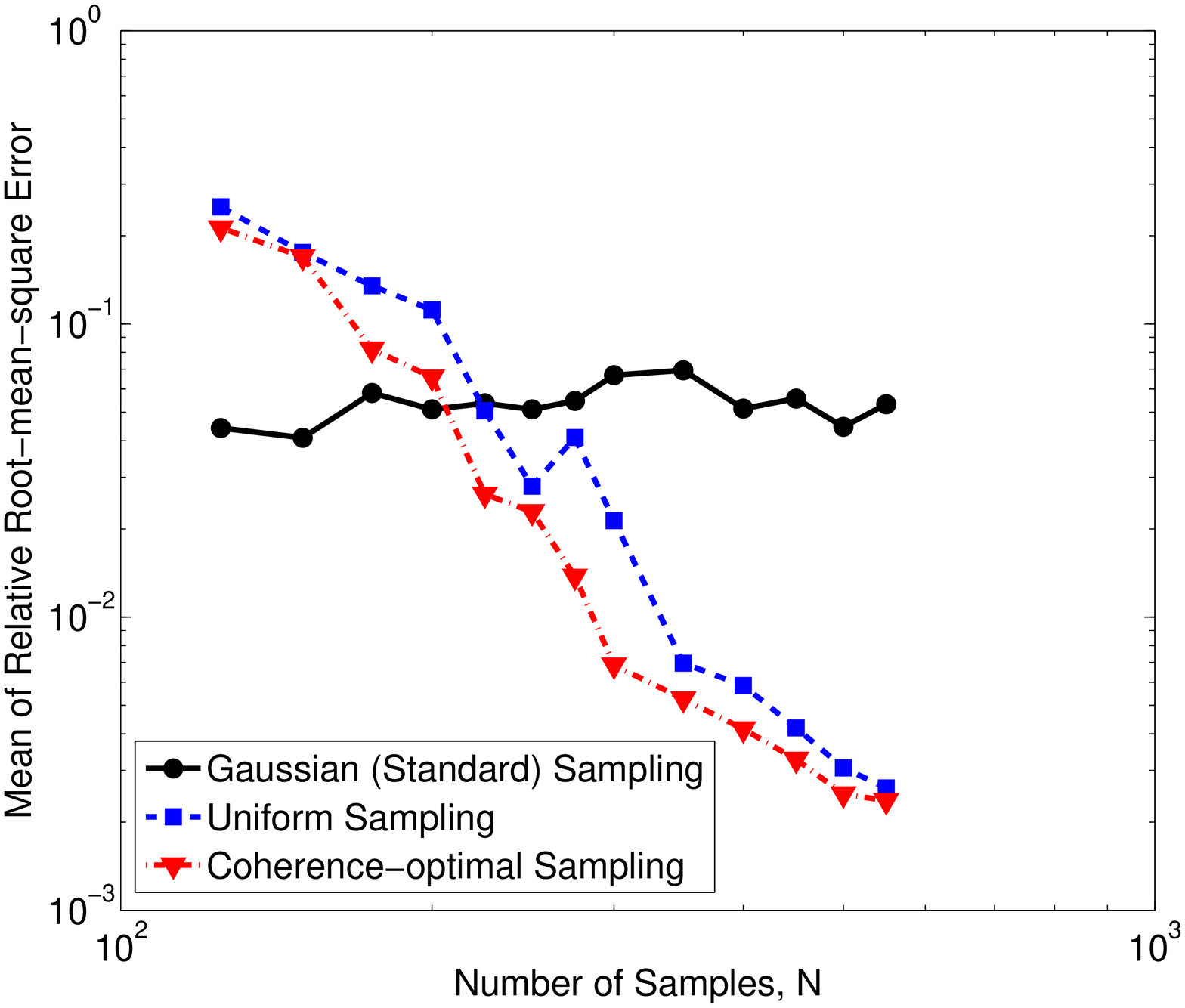}
\end{minipage}
\hspace{.6cm}
\begin{minipage}{.5\textwidth}
\centering
\includegraphics[scale=0.32]{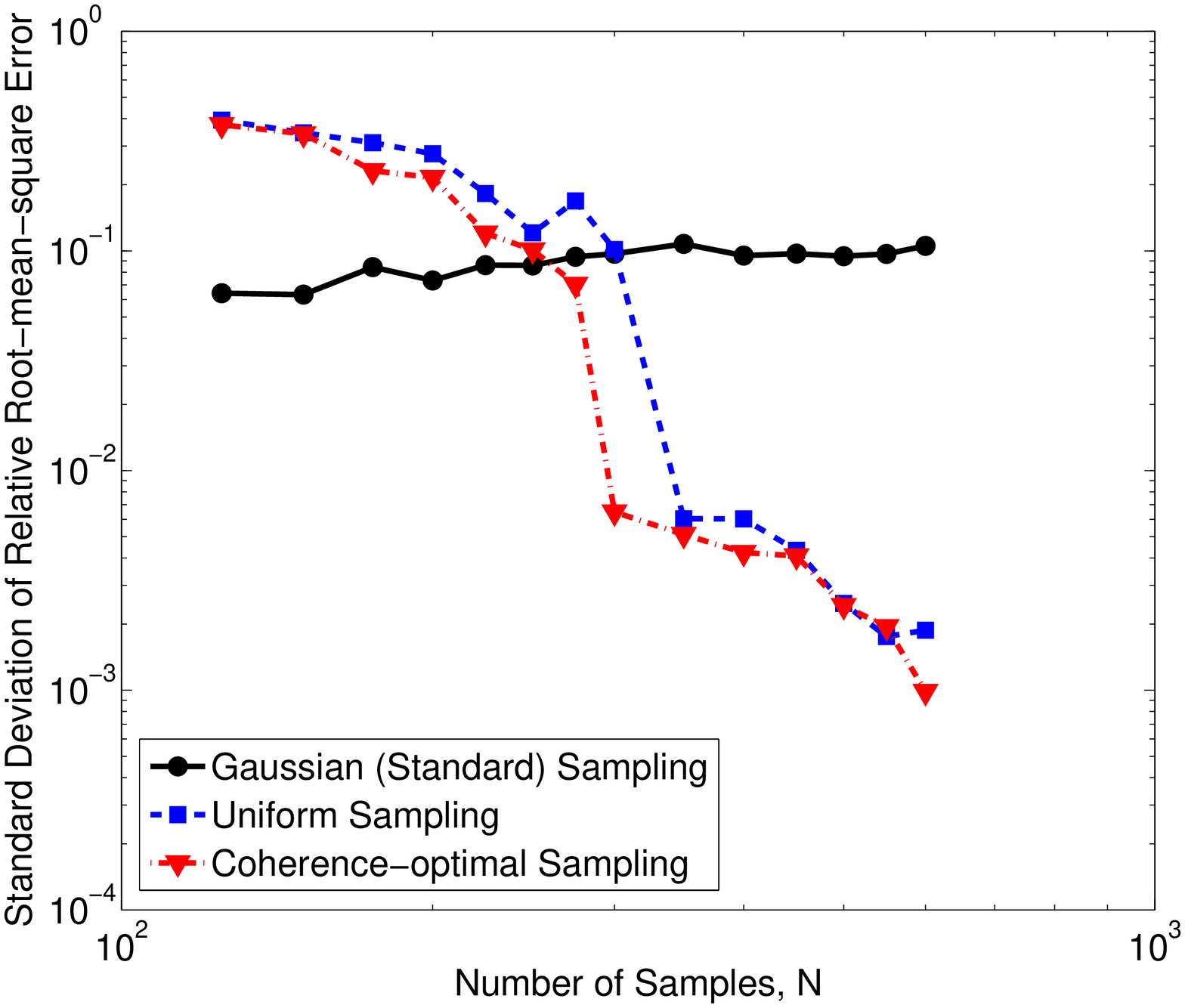}
\end{minipage}
\caption{Moments of root-mean-squared error between the reference and $\ell_1$-minimization solutions for the various sampling methods as a function of the number of samples.}
\label{Fig:ODERMSE}
\end{figure}

\begin{figure}[h!]
\hspace{-1.1cm}
\begin{minipage}{.5\textwidth}
\centering
\includegraphics[scale=0.32]{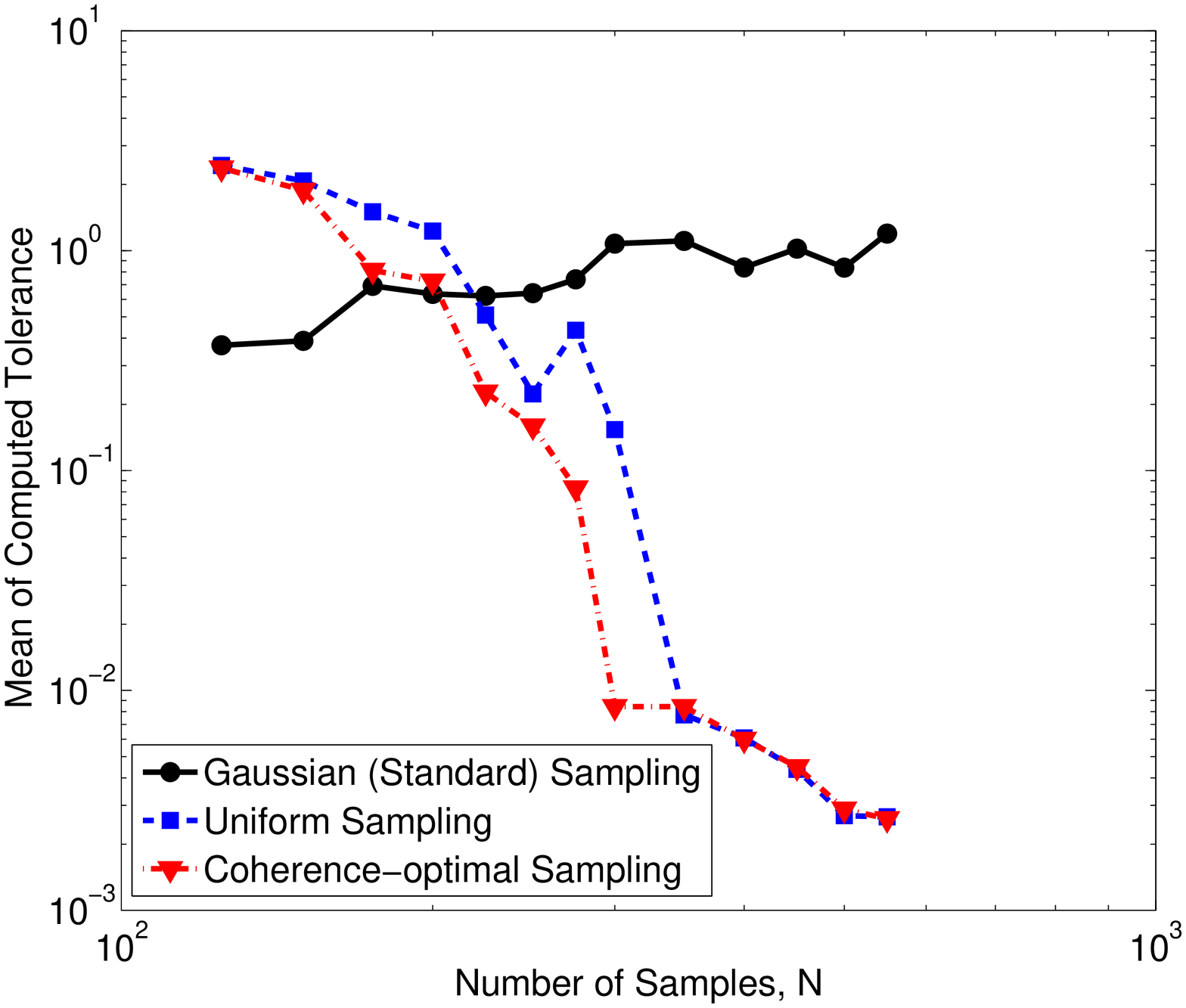}
\end{minipage}
\hspace{.6cm}
\begin{minipage}{.5\textwidth}
\centering
\includegraphics[scale=0.32]{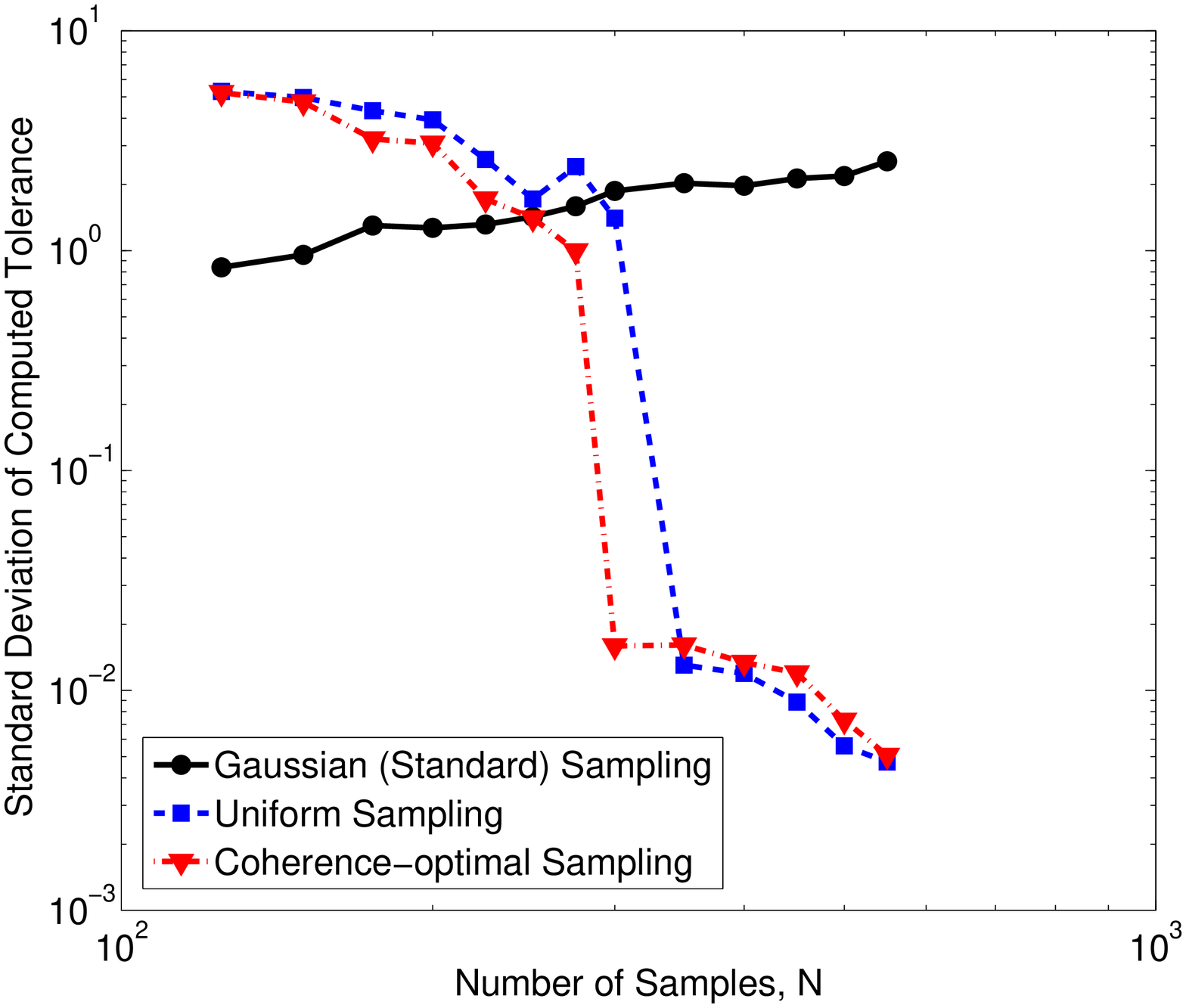}
\end{minipage}
\caption{Plots for the moments of cross-validated estimates of tolerance for the various sampling methods as a function of the number of samples.}
\label{Fig:ODETol}
\end{figure}

\section{\texorpdfstring{Proofs}{Proofs}}
\label{sec:Proofs}

Here we present proofs for the theorems in Section~\ref{sec:sampling}.  These proofs, except for that of Theorem~\ref{thm:LowPTransformedSamples}, rely on an analysis of the appropriate orthonormal polynomials. We first work toward proofs for Theorems~\ref{thm:NatSampleCoherence} and~\ref{thm:TransformedSamples}, which rely on understanding Hermite polynomials. To do so we require a few technical Lemmas concerning broad behavior of the polynomials asymptotically in order.

This analysis is focused on three domains for one-dimensional polynomials. The first sampling region coincides nearly with the so-named oscillatory region of $\psi_{p}(\xi)$,~\cite{AskeyWainger}, within which all zeros of $\psi_k(\xi)$ are found for $k\le p$. A second region, referred to as the monotonic region,~\cite{AskeyWainger}, is the complementary region where polynomials tend to increase monotonically in magnitude, and in this region we focus on bounding the extreme values of $\psi_k(\xi)$. The third region of importance is the boundary between the monotonic and oscillatory regions, referred to as the boundary region.

For multidimensional Hermite polynomials, we identify a domain $\mathcal{S}$ which fully contains the multidimensional analogue to the oscillatory and boundary regions, and partially contains the monotonic region. The size of the monotonic region included is determined so as to satisfy the conditions of (\ref{Eqn:CoherenceUnbounded}), while admitting a useful bound on the extreme values of $|\psi_k(\bm{\xi})|$ for $\bm{\xi}\in\mathcal{S}$. The method for our selection of $\mathcal{S}$ is to include $\bm{\xi}$ corresponding to the largest values of the density function, $f(\bm{\xi})$, until $\mathcal{S}$ is verified to satisfy (\ref{Eqn:CoherenceUnbounded}). In the case of Hermite polynomials, this heuristic is justified as Hermite polynomials tend to inversely relate with $f(\bm{\xi})$ for $\bm{\xi}$ within the monotonic region,~\cite{Szego}. The selection of $\mathcal{S}$ determines our radius for sampling uniformly from the $d$-dimensional ball, and we will show that this involves taking $\mathcal{S}=\{\bm{
\xi}:\|\bm{\xi}\|_2\le r_p\}$ for an $r_p$ that grows asymptotically like $2\sqrt{p}$.

\subsection{Key Hermite Lemmas}
\label{subsec:keyHermite}

For convenience with the cited literature, we prove our results using the orthonormalized physicists' Hermite polynomials (orthonormal polynomials with respect to $f(\bm{\xi}):=\pi^{-d/2}\exp(-\|\bm{\xi}\|^2)$). We note that our results in Section~\ref{sec:sampling} are in terms of the probabilists' polynomials (orthogonal with respect to $f(\bm{\xi}):=(2\pi)^{-d/2}\exp(-\|\bm{\xi}\|^2/2)$), but the two sets are related as follows. If $\{\psi_k(\xi)\}$ denotes the orthonormalized physicists' polynomials and $\{\psi^{\prime}_k(\xi)\}$ represents the orthonormalized probabilists' polynomials, then for each $k$, $\psi_{k}(\sqrt{2}\xi)=\psi^{\prime}_k(\xi)$. We point the reader to Section 5.5 of~\cite{Szego} for a derivation of this key relation. The effect on the results of the proof is that the probabilists' polynomials require a sampling radius that is $\sqrt{2}$ times larger than that for sampling the physicists' polynomials. This radial effect does not effect the volume of the points in the interior, 
particularly as seen in Theorem~\ref{thm:TransformedSamples}, as the radius change is cancelled out by the change in the normalizing constant for the distribution ($\pi^{-d/2}$ vs. $(2\pi)^{-d/2}$).

We bound (\ref{Eqn:CoherenceUnbounded}) for the $d$-dimensional Hermite polynomials as follows. Let $\bm{\xi}$ be a $d\times 1$ vector and $\bm{k}$ be a $d\times 1$ multi-index. In this framework $\psi_{\bm{k}}(\bm{\xi})$ is an orthonormal polynomial whose order in the $i$th dimension is given by $k_i := \bm{k}(i)$, and whose total order is at most $p$. As the total order is at most $p$, $\|\bm{k}\|_1\le p$, and as the weight function is formed by a tensor product of one dimensional weight functions, $\psi_{\bm{k}}$ is a tensor product of univariate orthogonal polynomials. In this way the bounds in arbitrary dimension are tensor products of one-dimensional bounds, which are more easily derived.

As mentioned previously, the behavior of the Hermite polynomials in the monotonic region, and the radially symmetric concentration of the weight function $\pi^{-1/2}\exp(-\|\bm{\xi}\|_2^2)$ suggests the candidate set $\mathcal{S}:=\{\|\bm{\xi}\|_2\le r_p\}$ with $r_p$ to satisfy the conditions of (\ref{Eqn:CoherenceUnbounded}). We recall that the minimum over admissible $\mathcal{S}$ yields a coherence parameter less than any given choice of $\mathcal{S}$, so that this selection of $\mathcal{S}$ leads to an upper bound on a minimal $\mu(\bm{Y})$.

Being of classical and modern importance, several classes of one-dimensional orthogonal polynomials (e.g., Hermite, Jacobi, Legendre, Laguerre) have received much analysis and key results are available in the literature,~\cite{RauhutWard,Szego,Hermite1,AskeyWainger,LagAsym,Jacobi}.

In particular, for our interest in Hermite polynomials, a direct consequence of bounds from~\cite{AskeyWainger} gives the bounds in Table~\ref{Tab:Bounds} for some positive $C,\gamma$, and $n:=2k+1$. The key conclusion is that we may bound $\exp(-\xi^2/2)\psi_k(\xi)$ for $\xi$ in each of these regions. 
\begin{table}
\center
\begin{tabular}{|l|l|}
\hline
Range for $\xi$ & Bound for $|\psi_k(\xi)|$\\
\hline
$0\le |\xi|\le n^{1/2}-n^{-1/6}$ & $Cn^{-1/8}(n^{1/2}-|\xi|)^{-1/4}\exp(\xi^2/2)$\\
\hline
$n^{1/2}-n^{-1/6}\le |\xi|\le n^{1/2}+n^{-1/6}$ & $Cn^{-1/12}\exp(\xi^2/2)$\\
\hline
\end{tabular}
\caption{Bounds in the {\color{black}oscillatory} and boundary regions of Hermite polynomials from~\cite{AskeyWainger}. Here, $C$ is some positive constant and $n:=2k+1$.}
\label{Tab:Bounds}
\end{table}

The bounds in Table~\ref{Tab:Bounds} are sufficient for both our uses within the oscillatory region and the boundary of the oscillatory and monotonic region. We derive a bound within the monotonic region using results in~\cite{Hermite1}. We first summarize the needed results as follows. Let $\sigma_k(\xi):=\sqrt{\xi^2-2k}$ for $|\xi|\ge \sqrt{2k}+\epsilon_k$ where $\epsilon_k\rightarrow 0$ as $k\rightarrow\infty$. We note that our analysis does not address how rapidly we may take $\epsilon_k$ to $0$, and for our purposes it is more convenient to redefine $\epsilon_k$ such that $|\xi|\ge\sqrt{(2+\epsilon_k)k+1}$, again letting $\epsilon_k\rightarrow 0$. This lack of effective analysis for $\epsilon_k$ implies a lack of effective analysis for derived quantities, and all results are guaranteed to hold in an asymptotic sense without an analysis as to how rapidly convergence occurs. We do refer the reader to~\cite{Hermite1} for some analysis of how $\epsilon_k$ may be taken to zero, specifically in a worst case, $\epsilon_k=O(k^{-1/6})$. As a matter of notation, while $\sigma_k(\xi)$ depends on both $k$ and $\xi$, in what follows we suppress the dependence on $\xi$. Following~\cite{Hermite1}, we may approximate $\psi_k(\xi)$ when $|\xi|\ge\sqrt{(2+\epsilon_k)k+1}$ by 
\begin{align}
\nonumber
\frac{c^{\prime}_k}{C_k}\exp\left(\frac{\xi^2-\sigma_k \xi-k}{2}\right)(\sigma_k+\xi)^k\sqrt{\frac{1}{2}\left(1+\frac{\xi}{\sigma_k}\right)} &\le \psi_k(\xi);\\
\label{eqn:BetterAsym}
\frac{c_k}{C_k}\exp\left(\frac{\xi^2-\sigma_k \xi-k}{2}\right)(\sigma_k+\xi)^k\sqrt{\frac{1}{2}\left(1+\frac{\xi}{\sigma_k}\right)} &\ge \psi_k(\xi),
\end{align}
where both $c^{\prime}_k, c_k\rightarrow 1$ as $k\rightarrow \infty$, and $C_k=\sqrt{2^{k}k!}$ is the appropriate constant so that the $\{\psi_k\}$ are orthonormal with regards to the weight $f(\xi)=\pi^{-1/2}\exp(-\xi^2)$.  For smaller $|\xi|$ the polynomials are effectively oscillatory and more technically troublesome to work with. Thankfully, as we are more concerned with approximating key integrals where we know the value ($1$ or $0$ by orthonormality) over the real line, we do not need to delve closely into the analysis for small $|\xi|$, and understanding the behavior for large $|\xi|$ is sufficient.

The key technical results to bound $\psi_k$ in the monotonic region are presented in the following lemma, where the motivating idea is to show that the polynomial $\psi_k(\xi)$ is tightly bounded by an envelope with a well behaved exponential parameter, denoted by $\eta_k(\xi)$. Due to the length of the proof we delay the proof to Appendix A.
\begin{lem}
\label{lem:PolyBeh}
Let $C_k$ and $\sigma_k$ be as in (\ref{eqn:BetterAsym}), and define the function $\eta_k(\xi)$ implicitly by,
\begin{align}
\label{eqn:ExpApproximation}
\frac{1}{C_k}\exp\left(\frac{\xi^2-\sigma_k \xi-k}{2}\right)(\sigma_k+\xi)^k\sqrt{\frac{1}{2}\left(1+\frac{\xi}{\sigma_k}\right)}&=\exp\left(\eta_k(\xi)\xi^2\right).
\end{align}
That is we approximate $|\psi_k(\xi)|$ by $\exp\left(\eta_k(\xi)\xi^2\right)$ with the exponent $\eta_k(\xi)$ implicitly defined by the approximation in (\ref{eqn:BetterAsym}).

It follows that 
\begin{enumerate}
\item For $\epsilon> 0$, $\mathop{\lim}\limits_{k\rightarrow\infty}\eta_k(\sqrt{(2+\epsilon)k+1})\rightarrow \frac{1}{2}-\frac{\log(2)}{2(2+\epsilon)}$.
\item For a sequence of $\epsilon_k>0$ such that $\epsilon_k\rightarrow 0$ as $k\rightarrow\infty$, and for $\xi_1>\xi_0\ge\sqrt{(2+\epsilon_k)k+1}$, $\eta_k(\xi_1)<\eta_k(\xi_0)$.
\item For a sequence of $\epsilon_k$ such that $\epsilon_k\rightarrow 0$, some finite $K$ and $k_1\ge K$, $k_0<k_1$, and for $\xi\ge\sqrt{(2+\epsilon_{k_1})k_1+1}$, it follows that $\eta_{k_0}(\xi)<\eta_{k_1}(\xi)$.
\end{enumerate}
\end{lem}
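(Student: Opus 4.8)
\textbf{Proof proposal for Lemma~\ref{lem:PolyBeh}.}
The plan is to work directly from the implicit definition~(\ref{eqn:ExpApproximation}), taking logarithms to obtain an explicit formula for $\eta_k(\xi)$ and then analyzing it as an ordinary function of the two variables $k$ and $\xi$ (with $\sigma_k=\sigma_k(\xi)=\sqrt{\xi^2-2k}$). Taking $\log$ of both sides and dividing by $\xi^2$ gives
\begin{align*}
\eta_k(\xi) = \frac{1}{\xi^2}\left[-\log C_k + \frac{\xi^2-\sigma_k\xi-k}{2} + k\log(\sigma_k+\xi) + \frac{1}{2}\log\!\left(\frac{1}{2}\Bigl(1+\frac{\xi}{\sigma_k}\Bigr)\right)\right].
\end{align*}
Using $C_k=\sqrt{2^k k!}$ and Stirling's formula, $\log C_k = \tfrac{k}{2}\log 2 + \tfrac{1}{2}\log k! = \tfrac{k}{2}\log(2k) - \tfrac{k}{2} + O(\log k)$, so the leading behavior of $\eta_k(\xi)$ is governed by an elementary expression in $\xi$ and $k$. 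This is the computational backbone of all three parts.

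For part~(1), I would substitute $\xi = \sqrt{(2+\epsilon)k+1}$, so $\xi^2 = (2+\epsilon)k+1$ and $\sigma_k^2 = \xi^2-2k = \epsilon k + 1$, hence $\sigma_k \sim \sqrt{\epsilon k}$ and $\xi \sim \sqrt{(2+\epsilon)k}$. Plug these asymptotics into the formula above; the $O(\log k)$ and $O(1)$ terms vanish after division by $\xi^2 \sim (2+\epsilon)k$, and careful bookkeeping of the surviving $\Theta(k)$ terms should collapse to $\tfrac{1}{2} - \tfrac{\log 2}{2(2+\epsilon)}$ in the limit. (As a sanity check, setting $\epsilon=2$ recovers $\tfrac12 - \tfrac{\log 2}{8}$, and multiplying the exponent by $2$ — the physicists'-to-probabilists' rescaling — and exponentiating gives $\exp(1-\tfrac{\log 2}{2})$ per dimension; the $p$-th power is $\exp(p - \tfrac{p\log 2}{2})$, matching $\eta_p^p$ with $\eta_p = \exp(1 - \tfrac{\log 2}{2})$... though the stated constant $\exp(2-\log 2)$ in Theorem~\ref{thm:NatSampleCoherence} suggests a factor arising from the $w^2\psi^2$ square, which I would track carefully.)

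For part~(2), monotonicity of $\xi \mapsto \eta_k(\xi)$ on $\xi \ge \sqrt{(2+\epsilon_k)k+1}$: I would compute $\partial \eta_k/\partial\xi$ from the closed form, using $d\sigma_k/d\xi = \xi/\sigma_k$. The $1/\xi^2$ prefactor produces a $-2\eta_k/\xi$ term, so it suffices to show $\xi \partial_\xi(\text{bracket}) < 2(\text{bracket})$, equivalently that the bracket grows slower than quadratically; intuitively this holds because $\log|\psi_k(\xi)|$ is dominated by the $\tfrac{\xi^2}{2}$ Gaussian factor already divided out, leaving a subquadratic remainder. For part~(3), monotonicity in the order $k$ for fixed large $\xi$: I would treat $k$ as continuous, differentiate the bracket in $k$ (again using $d\sigma_k/dk = -1/\sigma_k$ and Stirling for $d(\log C_k)/dk \approx \tfrac12\log(2k)$), and show the derivative is positive for $\xi$ beyond the monotonic-region threshold of the larger index — this is the statement that higher-order polynomials have a strictly larger exponential envelope parameter far out in the tail, which is geometrically plausible since a degree-$k_1$ polynomial eventually dominates a degree-$k_0$ one.

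The main obstacle will be part~(3): controlling the $k$-derivative uniformly requires that $\xi$ be safely inside the monotonic region of index $k_1$ (hence of all $k \le k_1$), and the asymptotic-only control of $\epsilon_k$ — the paper explicitly disclaims effective bounds — means I can only conclude the inequality for $k_1 \ge K$ with $K$ finite but unspecified, and must phrase everything in that asymptotic register. A secondary nuisance throughout is that the approximation~(\ref{eqn:BetterAsym}) bounds $\psi_k$ between $c_k/C_k(\cdots)$ and $c_k'/C_k(\cdots)$ with $c_k,c_k'\to 1$, so $\eta_k$ as defined is really an envelope exponent up to a $\tfrac{1}{\xi^2}\log(1+o(1))$ correction that is negligible once $\xi^2 \gtrsim k \to \infty$; I would remark on this at the outset so it does not clutter the three arguments.
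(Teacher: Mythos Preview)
Your approach to parts~(1) and~(2) is essentially the paper's: take logs to obtain the explicit formula for $\eta_k(\xi)$, substitute for part~(1), and differentiate in $\xi$ with $\partial\sigma_k/\partial\xi=\xi/\sigma_k$ for part~(2). One caution on your part~(2) heuristic: the ``bracket'' is \emph{not} subquadratic---it contains the term $\xi^2/2$, so the condition $\xi\,\partial_\xi(\text{bracket})<2(\text{bracket})$ is borderline on that term and the sign is decided by the remaining pieces. The paper does not rely on a growth-rate intuition here; it carries out the full derivative and packages the inequality as $Z_\xi Y_\xi>(1-Z_\xi)X_\xi$ for explicit quantities $X_\xi,Y_\xi,Z_\xi$, then verifies positivity of $X_\xi$ via the Log-Gamma expansion of $C_k$. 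You should expect to do a comparable amount of algebra rather than appeal to ``subquadratic''.

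For part~(3) you take a genuinely different route. The paper does \emph{not} differentiate in a continuous $k$; instead it invokes the Hermite differential--difference relation $\sqrt{2(k+1)}\,\psi_{k+1}=2\xi\psi_k-\psi_k'$, inserts the envelope $\exp(\eta_k\xi^2)$ (whose $\xi$-derivative is already in hand from part~(2)), and shows the ratio $\psi_{k+1}/\psi_k$ exceeds~$1$ at $\xi=\sqrt{2(k+1)+1}$, hence $\eta_{k+1}>\eta_k$ for large $k$. This recurrence-based argument is short and reuses the part~(2) computation, but because it routes through the polynomial approximation~(\ref{eqn:BetterAsym}) it only controls $k_0\ge K$; the paper then treats $k_0<K$ separately by a direct limit on $\sigma_{k_0}(\sqrt{2k_1+1})/\sqrt{2k_1+1}\to1$. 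Your continuous-$k$ derivative, by contrast, acts on the explicit formula for $\eta_k$ itself and in principle could avoid that case split---but if you lean on Stirling for $\partial_k\log C_k$ you will reintroduce a large-$k$ restriction and need the same small-$k_0$ patch. Either way works; the paper's use of the recurrence is the more classical move and recycles the $\xi$-derivative already computed.
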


Using Lemma~\ref{lem:PolyBeh}, we show the following result which is useful for a direct bound on the coherence parameter.
\begin{lem}
\label{lem:CohBounds}
For some choice of $\epsilon_p$ such that $\epsilon_p\rightarrow 0$, and $p\ge p_0$ for some $p_0$ it follows that for $r_p\ge \sqrt{(2+\epsilon_p)p+1}$, %
\begin{align}
\label{eqn:CohBound1}
\mathop{\sup}\limits_{k\le p}\int_{|\xi|>r_p}\psi^2_k(\xi)\frac{e^{-\xi^2}}{\sqrt{\pi}}d\xi&\le\frac{(1+\delta_p)\mbox{erfc}(\sqrt{(1-2\eta_p(r_p))r_p^2})}{\sqrt{1-2\eta_p(r_p)}},
\end{align}
where $\mbox{erfc}(\cdot)$ is the complement to the error function and $\delta_p\rightarrow 0$.
Considering multidimensional polynomials and letting $\delta_p\rightarrow 0$,
\begin{align}
\label{eqn:CohBound2}
\mathop{\sup}\limits_{\substack{\|\bm{\xi}\|_2\le r_p\\ \|\bm{k}\|_1\le p}}|\psi_{\bm{k}}(\bm{\xi})|\le (1+\delta_p)\exp\left(\eta_p(r_p)r_p^2\right).
\end{align}
Here, $\eta_p$ is defined implicitly as in (\ref{eqn:ExpApproximation}), or equivalently, explicitly as in (\ref{eqn:ExpApp2}).
\end{lem}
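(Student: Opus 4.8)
\textbf{Proof proposal for Lemma~\ref{lem:CohBounds}.}

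The plan is to leverage Lemma~\ref{lem:PolyBeh} in two directions: the tail-integral bound \eqref{eqn:CohBound1} and the pointwise bound \eqref{eqn:CohBound2}. For \eqref{eqn:CohBound1}, fix $k \le p$ and start from the asymptotic envelope \eqref{eqn:BetterAsym}, which tells us $|\psi_k(\xi)| \le (c_k/C_k)\exp((\xi^2 - \sigma_k\xi - k)/2)(\sigma_k+\xi)^k\sqrt{\tfrac12(1+\xi/\sigma_k)}$ for $|\xi| \ge \sqrt{(2+\epsilon_k)k+1}$, hence $\psi_k^2(\xi) \le (1+o(1))\exp(2\eta_k(\xi)\xi^2)$ by the definition \eqref{eqn:ExpApproximation}. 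First I would use monotonicity (part 2 of Lemma~\ref{lem:PolyBeh}) to replace $\eta_k(\xi)$ by $\eta_k(r_p) \le \eta_p(r_p)$ (the latter inequality by part 3, since $k \le p$) uniformly for $|\xi| > r_p \ge \sqrt{(2+\epsilon_p)p+1}$. Then $\psi_k^2(\xi)\,e^{-\xi^2}/\sqrt{\pi} \le (1+\delta_p)\exp(-(1-2\eta_p(r_p))\xi^2)/\sqrt{\pi}$, and integrating over $|\xi| > r_p$ gives a Gaussian tail. Since $2\eta_p(r_p) < 1$ (which follows from part 1, as the limiting value $\tfrac12 - \tfrac{\log 2}{2(2+\epsilon)} < \tfrac12$), the substitution $t = \sqrt{1-2\eta_p(r_p)}\,\xi$ converts $\int_{|\xi|>r_p} \exp(-(1-2\eta_p(r_p))\xi^2)\,d\xi/\sqrt{\pi}$ into $\mathrm{erfc}(\sqrt{(1-2\eta_p(r_p))r_p^2})/\sqrt{1-2\eta_p(r_p)}$, which is exactly the claimed right-hand side after absorbing the $(1+o(1))$ factors into $(1+\delta_p)$.

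For \eqref{eqn:CohBound2}, the multidimensional bound follows by tensorization. Writing $\psi_{\bm k}(\bm\xi) = \prod_{i=1}^d \psi_{k_i}(\xi_i)$, I would split coordinates according to whether $|\xi_i|$ lies in the oscillatory/boundary regime (where Table~\ref{Tab:Bounds} gives $|\psi_{k_i}(\xi_i)| \le C n_i^{-1/12}\exp(\xi_i^2/2)$, i.e., bounded by $\exp(\xi_i^2/2)$ up to a subexponential factor) or the monotonic regime $|\xi_i| \ge \sqrt{(2+\epsilon_{k_i})k_i+1}$ (where $|\psi_{k_i}(\xi_i)| \le (1+o(1))\exp(\eta_{k_i}(\xi_i)\xi_i^2)$ with $\eta_{k_i}(\xi_i) < \tfrac12$ by parts 1--2). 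In both regimes we get $|\psi_{k_i}(\xi_i)| \le (1+o(1))\exp(\eta_p(r_p)\,\xi_i^2)$ uniformly, provided $\eta_p(r_p) \ge \tfrac12$ — but since $\eta_p(r_p) < \tfrac12$, the oscillatory-region factor $\exp(\xi_i^2/2)$ would seem to dominate. This is where care is needed: within the ball $\|\bm\xi\|_2 \le r_p$, a coordinate in the oscillatory regime has $|\xi_i| \le \sqrt{2k_i} \lesssim \sqrt{2p}$, so $\exp(\xi_i^2/2)$ is at most $\exp(p)$-type, whereas summing exponents across coordinates with $\sum_i \xi_i^2 \le r_p^2$ and using that $\eta_p(r_p)r_p^2 \sim (\tfrac12 - \tfrac{\log 2}{4})(2p) > p$ ensures the product over all coordinates is dominated by $\exp(\eta_p(r_p)r_p^2)$. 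Concretely, I would argue that the exponent $\sum_i \eta_{k_i}(\xi_i)\xi_i^2$ (interpreting $\eta_{k_i}(\xi_i) = \tfrac12$ when $\xi_i$ is in the oscillatory region) is maximized, subject to $\sum \xi_i^2 \le r_p^2$ and $\|\bm k\|_1 \le p$, by concentrating all the "order budget" and radius into a single coordinate — because $\eta_k(\xi)$ is increasing in $k$ (part 3) and $\eta_k(\xi)\xi^2$ at the monotonic-region boundary equals $(\tfrac12 - \tfrac{\log 2}{2(2+\epsilon)})((2+\epsilon)k+1)$, which is superadditive in $k$. Thus the supremum is attained (asymptotically) at $\bm k = (p,0,\dots,0)$, $\bm\xi = (r_p,0,\dots,0)$, giving $\exp(\eta_p(r_p)r_p^2)$ up to the $(1+\delta_p)$ factor.

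The main obstacle I anticipate is the tensorization/optimization step in \eqref{eqn:CohBound2}: rigorously showing that distributing order and radius across multiple coordinates cannot beat the single-coordinate configuration. This requires a convexity/superadditivity argument for the map $k \mapsto \max_{|\xi| \le \rho}\eta_k(\xi)\xi^2$ combined with the constraint coupling $\|\bm k\|_1 \le p$ and $\sum \xi_i^2 \le r_p^2$ — and one must handle the hybrid case where some coordinates sit in the oscillatory region (effective exponent $\tfrac12$) and others in the monotonic region (exponent $< \tfrac12$). A clean way is to note $\eta_k(\xi)\xi^2 \le \tfrac12\xi^2$ always, reduce to showing $\tfrac12\|\bm\xi\|_2^2 \le \eta_p(r_p)r_p^2$ is \emph{false} in general but that the genuine monotonic-region contributions compensate; alternatively, invoke the explicit formula \eqref{eqn:ExpApp2} for $\eta_p$ referenced in the statement to get a closed-form exponent and optimize directly. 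A secondary, more routine obstacle is tracking the subexponential prefactors ($n^{-1/8}$, $n^{-1/12}$, the $c_k \to 1$ factors, and the $(\sigma_k + \xi)^k$ and $\sqrt{\tfrac12(1+\xi/\sigma_k)}$ terms) and confirming their $d$-fold products remain inside $(1+\delta_p)$ under the hypothesis $N = O(P^k)$ and $d = o(p)$ — this is exactly the kind of bookkeeping the paper already flagged as asymptotic-only, so I would invoke that standing convention rather than chase explicit rates.
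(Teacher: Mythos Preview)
Your treatment of \eqref{eqn:CohBound1} is correct and matches the paper exactly: bound $|\psi_k(\xi)|$ by $\exp(\eta_k(\xi)\xi^2)$ via \eqref{eqn:BetterAsym}/\eqref{eqn:ExpApproximation}, use part~2 of Lemma~\ref{lem:PolyBeh} to freeze $\eta_k(\xi)$ at $\eta_k(r_p)$, use part~3 to pass from $\eta_k$ to $\eta_p$, and then compute the Gaussian tail.

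For \eqref{eqn:CohBound2} you reach the right conclusion but by a substantially longer route than the paper. You set up a constrained optimization and argue by superadditivity/convexity of $k\mapsto \max_{|\xi|\le \rho}\eta_k(\xi)\xi^2$ that the single-coordinate configuration $(\bm k,\bm\xi)=((p,0,\dots,0),(r_p,0,\dots,0))$ dominates. The paper instead writes the log of the tensor product as the inner product $\bm\eta^T\bm\xi^2$ (with $\bm\eta$ the vector of $\eta_{k_i}(\xi_i)$ and $\bm\xi^2$ the vector of $\xi_i^2$) and applies the H\"older-type bound $|\bm\eta^T\bm\xi^2|\le\|\bm\eta\|_\infty\|\bm\xi^2\|_1=\|\bm\eta\|_\infty\|\bm\xi\|_2^2$, noting that equality holds precisely when $\bm\xi^2$ is supported on the coordinate realizing $\|\bm\eta\|_\infty$. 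Combined with part~3 of Lemma~\ref{lem:PolyBeh} (which pins $\|\bm\eta\|_\infty$ to the order-$p$ exponent), this collapses the multidimensional problem to the one-dimensional case $|\psi_p(r_p)|$ in one line, with no explicit convexity argument needed. Your superadditivity detour is not wrong, but it is the hard way to reproduce what the $\ell_\infty$--$\ell_1$ duality gives for free.

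Your flagged obstacle about the oscillatory region --- coordinates with effective exponent $\tfrac12>\eta_p(r_p)$ --- is a legitimate concern, and it is worth noting that the paper's proof is equally terse on this point: it simply works within the $\eta$-representation and absorbs all slack (oscillatory/boundary prefactors, small-$k_i$ corrections, the $c_k\to1$ factors) into the asymptotic $(1+\delta_p)$. Given the paper's stated convention that these results are asymptotic-only with no explicit rate for $\epsilon_p,\delta_p$, that is the expected level of rigor here, and you should do the same rather than attempt a full case analysis.
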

\begin{proof}
To show the first point note from Lemma~\ref{lem:PolyBeh} that for $|\xi|\ge\sqrt{2p+1}$,
\begin{align*}
|\psi_k(\xi)|\le c_k\exp(\eta_k(\xi)\xi^2).
\end{align*}
By the second point of Lemma~\ref{lem:PolyBeh}, $\eta_k(\xi)$ decreases as $\xi$ increases, yielding the bound on the integral in (\ref{eqn:CohBound1}), and by the third point of that Lemma, $\eta_p(\xi)\ge \eta_k(\xi)$ for all $k\le p$. The $\delta_p$ accounts for the approximation in (\ref{eqn:BetterAsym}) being inaccurate for finite $p$, but we do not address how quickly $\delta_p$ converges to zero.

To show (\ref{eqn:CohBound2}) note that for column vectors $\bm{\eta}$ and $\bm{\xi}^2$ with coordinates representing coordinates of $\eta$ and $\xi^2$ in each dimension, $|\bm{\eta}^T\bm{\xi}^2|\le\|\bm{\eta}\|_\infty\|\bm{\xi}^2\|_1$, with equality holding if $\|\bm{\eta}\|_\infty$ is achieved at the one coordinate on which $\bm{\xi}^2$ is supported. Further noting that $\|\bm{\xi}^2\|_1=\|\bm{\xi}\|_2^2$, it follows from the third point of Lemma~\ref{lem:PolyBeh}, and hence for large enough $p$,
\begin{align*}
\mathop{\sup}\limits_{\substack{\|\bm{\xi}\|_2\le r_p\\ \|\bm{k}\|_1\le p}}|\psi_{\bm{k}}(\bm{\xi})|&\le (1+\delta_p)|\psi_{p}(r_p)|,
\end{align*}
where the bound on $\psi_p$ from (\ref{eqn:BetterAsym}) shows (\ref{eqn:CohBound2}).
\end{proof}

\subsection{Proof of Theorem~\ref{thm:NatSampleCoherence}}
\label{subsec:thmNSCH}
With Lemma \ref{lem:CohBounds} we are prepared to prove Theorem~\ref{thm:NatSampleCoherence} for the case of Hermite polynomials. Let $\mathcal{S} = \{\bm{\xi}:\|\bm{\xi}\|_2\le r_p\}$ where $r_p$ is as in Lemma~\ref{lem:CohBounds}, and we show that the conditions for (\ref{Eqn:CoherenceUnbounded}) are satisfied. Let the total number of polynomials be given by $P={p +d\choose d}$ where $p$ is the total order of the approximation and $d$ the number of dimensions. Recall that the number of samples from the orthogonal polynomial basis is $N$. We show that
\begin{align*}
\mathbb{P}(\mathcal{S}^c) = \mbox{erfc}(r_p)&<\frac{1}{NP};\\
\mathop{\sum}\limits_{k=1}^{P}\mathbb{E}\left[\psi^2_k(\bm{Z})\bm{1}_{\mathcal{S}^{c}}\right]\le \frac{P(1+\delta_p)\mbox{erfc}(\sqrt{(1-2\eta_p(r_p))r_{p}^2})}{\sqrt{1-2\eta_p(r_p)}}& < \frac{1}{20\sqrt{P}},
\end{align*}
where $\bm{Z}$ is normally distributed with variance $1/2$, and we recall that substituting $\bm{Z}^{\prime} = \sqrt{2}\bm{Z}$ scales the physicists' polynomials to probabilists' polynomials.

By Lemma~\ref{lem:CohBounds} these are satisfied whenever
\begin{align*}
\mbox{erfc}(r_p)&<\frac{1}{NP};\\
(1+\delta_p)\frac{\mbox{erfc}\left(\sqrt{(1-2\eta_p(r_p))r_p^2}\right)}{\sqrt{1-2\eta_p(r_p)}}& < \frac{1}{20P^{3/2}}.
\end{align*}
Noting that $\delta_p\rightarrow 0$, and $\mbox{erfc}(r_p)=O(e^{-r_p^2}/r_p)=O(\exp(-(2+\epsilon_p)p)/\sqrt{(2+\epsilon_p)p})$,~\cite{NISTFunctionHandbook}, it follows that the first inequality is satisfied for $r_p=\sqrt{(2+\epsilon_p)p+1}$ if $NP = o(\sqrt{(2+\epsilon_p)p}\exp((2+\epsilon_p)p))$. Recall that we assume that $N=O(P^k)$, and it remains to show that $P^k = o(\sqrt{p}\exp((2+\epsilon_p)p)$, which we address shortly.

From the first point of Lemma~\ref{lem:PolyBeh}, we see for large $p$ that $1-2\eta_p(\sqrt{(2+\epsilon_p)p+1})\ge c$ for a positive constant $c$. The second inequality is then satisfied for $r_p$ if $P^{3/2} = o(\sqrt{p}\exp(c_\epsilon p))$ for an appropriate constant $c_\epsilon>0$ depending on $\epsilon_p$. 

It remains to insure that both of these bounds allow $\epsilon_p$ to go to zero. If $d$ is fixed this holds as $P=O(p^d)= o(\exp(\delta p))$ for any $\delta>0$ establishing the bounds for both conditions for a fixed $d$. We consider the case where $d = c\cdot p$ for some $c>0$. Using Stirling's approximation we have that
\begin{align*}
P &= \frac{(p+d)!}{p!d!}=\frac{((c+1)p)!}{p!(cp)!},\\
 &\approx \sqrt{\frac{c+1}{pc2\pi}}(c+1)^p\left(1+\frac{1}{c}\right)^{cp},
\end{align*}
where the approximation holds with arbitrarily high accuracy as $p,d\rightarrow\infty$. This gives us that for large $p$, 
\begin{align*}
P &\approx \sqrt{\frac{c+1}{pc2\pi}}\beta^p,
\end{align*}
 where $\beta = (c+1)^{c+1}/c^c$. Note that $\beta$ goes to $1$ as $c\rightarrow 0$. It follows in the limit that 
\begin {align*}
P^k &\approx  \left(\frac{\sqrt{c+1}}{\sqrt{cp2\pi}}\right)^k\exp(\alpha k p),
\end{align*}
where $\alpha = \log(\beta)\rightarrow 0$. As $c\cdot p=d > 0$, it follows that $P^k = o(\sqrt{p}\exp(\delta p))$ for any fixed $k,\delta > 0$, establishing both inequalities  needed for the conditions of (\ref{Eqn:CoherenceUnbounded}) when $d=o(p)$ and $N=O(P^k)$.

Having shown $\mathcal{S}$ is acceptable, we now bound $\mu(\bm{\Xi})$ with this choice of $\mathcal{S}$. By Lemma~\ref{lem:CohBounds} and the definition of $\eta_k$ therein, together with the bounds in Table~\ref{Tab:Bounds} we have that,
\begin{align*}
\mu(\bm{\Xi})&\le \exp(\eta_p(r_p)r_p^2)^2,\\
&= \exp\left(\left[1-\frac{\log(2)}{2+\epsilon_p}+o(1)\right]\epsilon_p\right)\exp\left(\left[1-\frac{\log(2)}{2+\epsilon_p}+o(1)\right]2p\right).
\end{align*}
Letting
\begin{align*}
C_p:=&\exp\left(\left[1-\frac{\log(2)}{2+\epsilon_p}+o(1)\right]\epsilon_p\right);\\
\eta_p:=&\exp\left(2\left[1-\frac{\log(2)}{2+\epsilon_p}+o(1)\right]\right),
\end{align*}
it follows that
\begin{align*}
\mu(\bm{\Xi})&\le C_p\eta_p^p,
\end{align*}
As $\epsilon_p \rightarrow 0$, it follows that
\begin{align*}
C_p&\rightarrow 1;\\
\eta_p&\rightarrow\exp(2-\log(2))\approx 3.6945.
\end{align*}

\subsection{Proof of Theorem~\ref{thm:TransformedSamples}}
\label{subsec:ProofTransformedHermite}
Here, we consider a transformation such that $\phi_k(\bm{\xi}) = \psi_k(\bm{\xi})/G(\bm{\xi})$, with $G(\bm{\xi})>0$ so that $|\phi_k(\bm{\xi})| \le C$ uniformly in $k$ and $\bm{\xi}$ for some constant $C$. We may then use that $\psi_k(\bm{\xi}) = \phi_k(\bm{\xi})G(\bm{\xi})$ to identify $\psi_k(\bm{\xi})$ and satisfy the conditions of (\ref{Eqn:CoherenceUnbounded}). We note that this approach corresponds to a weight function $w(\bm{\xi}) = 1/G(\bm{\xi})$. In this framework, we sample $\{\psi_k(\bm{\xi})\}$ from a distribution proportional to $f(\bm{\xi})G^2(\bm{\xi})$, where $f(\bm{\xi})$ is the distribution with respect to which the $\psi_k(\bm{\xi})$ are orthogonal, and use that the $\{\phi_k(\bm{\xi})\}$ form a bounded and approximately orthogonal system. 

By (\ref{eqn:CohBound2}) of Lemma~\ref{lem:CohBounds} and the bounds in Table~\ref{Tab:Bounds}, for $\|\bm{\xi}\|_2\le \sqrt{(2+\epsilon_p)p+1}$ and $k\le p$,
\begin{align}
\label{eqn:hermite_function_bound}
|\psi_k(\bm{\xi})\exp(-\|\bm{\xi}\|_2^2/2)|&\le C,
\end{align}
which suggests taking $G(\bm{\xi})=\exp(\|\bm{\xi}\|^2/2)$. \\

\noindent{\bf Remark.} Notice that the function in the left side of the inequality in (\ref{eqn:hermite_function_bound}) is referred to as Hermite function whose upper bound $C$ is explicitly known, for instance, from \cite{Abramowitz10}. \\

From the argument in the proof of Theorem~\ref{thm:NatSampleCoherence} for the case of one dimensional polynomials,
\begin{align*}
\left|\frac{1}{\sqrt{\pi}}\int_{|\xi|\le \sqrt{(2+\epsilon_p)p+1}}(\psi_i(\xi)\exp(-
\xi^2/2))(\psi_j(\xi))\exp(-\xi^2/2))d\xi -\delta_{i,j}\right|&\le \epsilon_{i,j},
\end{align*}
where $\epsilon_{i,j}$ is small enough to insure that the conditions of (\ref{Eqn:CoherenceUnbounded}) hold. Considering a corresponding change for multidimensional polynomials, let
\begin{align*}
\phi_k(\bm{\xi})&=\pi^{-d/4}\exp(-\|\bm{\xi}\|_2^2/2) V^{1/2}\left(\sqrt{(2+\epsilon_p)p+1},d\right)\psi_{k}(\bm{\xi}),
\end{align*}
where $V(r,d)=(r\sqrt{\pi})^d/\Gamma(d/2+1)$ represents the volume of a $d$-dimensional ball of radius $r$.

If we instead consider a draw from the uniform distribution on the ball of radius $\sqrt{(2+\epsilon_p)p+1}$, then for a $d$-dimensional $\bm{\xi}$,
\begin{align*}
\left|\int_{\|\bm{\xi}\|_2\le \sqrt{(2+\epsilon_p)p+1}}\frac{\phi_i(\bm{\xi})\phi_j(\bm{\xi})}{V(\sqrt{(2+\epsilon_p)p+1},d)}d\bm{\xi} -\delta_{i,j}\right|&\le \epsilon_{i,j},
\end{align*}
and we have that $|\phi_k(\bm{\xi})|$ is bounded, and of order $\pi^{-d/4}V^{1/2}(\sqrt{(2+\epsilon_p)p+1},d)$, giving a bound on the coherence parameter of order $\pi^{-d/2}V(\sqrt{2p},d)$.

\subsection{Key Legendre Lemma}
\label{subsec:keyLegendre}
A key technical simplification is present when working with Legendre polynomials, namely we may fix $\mathcal{S}$ to be $[-1,1]^d$ as a finite number of polynomials on a bounded domain are necessarily bounded. The technical results we require are presented in the following Lemma.

\begin{lem}
\label{lem:Legendre}
For the $1$-dimensional Legendre polynomials,
\begin{align}
\label{eqn:JacBoundChebNat}
\sup_{\xi\in[-1,1]}|\psi_k(\xi)|&=\sqrt{2k+1}.
\end{align}
Further,
\begin{align}
\label{eqn:JacBoundChebTrans}
\sup_{\xi\in[-1,1]}\sqrt{\pi}(1-\xi^2)^{1/4}|\psi_k(\xi)|&\le\sqrt{\frac{2k+1}{k}} \le \sqrt{3}.
\end{align}
\end{lem}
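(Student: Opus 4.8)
The plan is to reduce both bounds to classical facts about the univariate Legendre polynomials. Since the $\psi_k$ are normalized so that $\mathbb{E}[\psi_k^2]=1$ with respect to the uniform measure on $[-1,1]$, we have $\psi_k = \sqrt{2k+1}\,L_k$, where $L_k$ denotes the classical Legendre polynomial with $L_k(\pm 1)=(\pm 1)^k$ and $\int_{-1}^1 L_k(\xi)^2\,d\xi = 2/(2k+1)$. So (\ref{eqn:JacBoundChebNat}) is equivalent to $\sup_{\xi\in[-1,1]}|L_k(\xi)|=1$ and the first inequality of (\ref{eqn:JacBoundChebTrans}) is equivalent to $\sup_{\xi\in[-1,1]}(1-\xi^2)^{1/4}|L_k(\xi)|\le (\pi k)^{-1/2}$; the final inequality of (\ref{eqn:JacBoundChebTrans}) is just $(2k+1)/k=2+1/k\le 3$ for $k\ge 1$.

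For (\ref{eqn:JacBoundChebNat}) I would give the standard Sonin-type argument. Put $F_k(\xi):=(1-\xi^2)L_k'(\xi)^2 + k(k+1)L_k(\xi)^2$; differentiating and substituting the Legendre differential equation $(1-\xi^2)L_k'' - 2\xi L_k' + k(k+1)L_k = 0$ yields $F_k'(\xi)=2\xi\,L_k'(\xi)^2$, so $F_k$ is nonincreasing on $[-1,0]$ and nondecreasing on $[0,1]$. Hence $F_k(\xi)\le\max\{F_k(-1),F_k(1)\}=k(k+1)$ for all $\xi\in[-1,1]$, which forces $L_k(\xi)^2\le 1$, with equality at $\xi=\pm 1$. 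Multiplying through by $2k+1$ gives (\ref{eqn:JacBoundChebNat}).

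For (\ref{eqn:JacBoundChebTrans}) the cleanest route is to invoke the classical Bernstein-type pointwise estimate for Legendre polynomials (see, e.g., \cite{Szego}, \S 7.3): writing $\xi=\cos\theta$, one has $(\sin\theta)^{1/2}|L_k(\cos\theta)|\le (\pi k)^{-1/2}$ for $k\ge 1$, which is exactly $(1-\xi^2)^{1/4}|L_k(\xi)|\le (\pi k)^{-1/2}$; multiplying by $\sqrt{\pi}\sqrt{2k+1}$ gives the stated bound. A self-contained alternative that avoids citing the sharp estimate is a weighted Sonin functional: from $F_k'(\xi)=2\xi L_k'(\xi)^2$ one computes that $\widetilde F_k(\xi):=(1-\xi^2)F_k(\xi)$ satisfies $\widetilde F_k'(\xi)=-2k(k+1)\xi\,L_k(\xi)^2$, so $\widetilde F_k$ is maximized at $\xi=0$ and therefore $(1-\xi^2)L_k(\xi)^2\le L_k(0)^2+L_k'(0)^2/(k(k+1))$; one then inserts the classical central values $L_k(0)$, $L_k'(0)$ (one of which vanishes according to the parity of $k$) together with the sharp central-binomial bound $\binom{2m}{m}\le 4^m/\sqrt{\pi m}$, splitting into even and odd $k$ and checking the smallest values of $k$ directly.

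The first bound is a short and routine ODE computation; the real work is in the second, where the delicate point is controlling the constants closely enough to land on the stated $\sqrt{(2k+1)/k}$ — the textbook Bernstein/Szegő estimates carry extra numerical factors that must be tracked carefully, and the weighted-Sonin approach produces a bound at $\xi=0$ that still has to be sharpened through the central-binomial estimate and the parity split.
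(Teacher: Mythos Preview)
Your primary route coincides with the paper's: the paper proves this lemma purely by citation, invoking Theorem~7.32.1 of Szeg\H{o} for (\ref{eqn:JacBoundChebNat}) and Theorem~7.3.3 of Szeg\H{o} together with Lemma~5.1 of \cite{RauhutWard} for (\ref{eqn:JacBoundChebTrans}). Your reduction $\psi_k=\sqrt{2k+1}\,L_k$ and appeal to the Bernstein estimate in \S7.3 of \cite{Szego} is the same in substance; your Sonin-functional argument for $|L_k|\le 1$ is a correct self-contained replacement for the first citation, which the paper does not supply. (One caution on constants: the form of Szeg\H{o}'s Theorem~7.3.3 you will find gives $(\sin\theta)^{1/2}|L_k(\cos\theta)|<\sqrt{2/(\pi k)}$, not $(\pi k)^{-1/2}$; the paper sidesteps this by pointing to the restatement in \cite{RauhutWard}.)

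Your self-contained alternative for (\ref{eqn:JacBoundChebTrans}), however, does not close. The weighted functional $\widetilde F_k=(1-\xi^2)F_k$ correctly yields $(1-\xi^2)L_k(\xi)^2\le L_k(0)^2+L_k'(0)^2/(k(k+1))=:C_k$, but the target is a bound on $(1-\xi^2)^{1/2}L_k(\xi)^2$, i.e.\ half a power less of the weight. The only bridge available from what you have is the interpolation $(1-\xi^2)^{1/2}L_k^2=\sqrt{(1-\xi^2)L_k^2}\cdot\sqrt{L_k^2}\le\sqrt{C_k}$, and since $C_k\sim 2/(\pi k)$ this is $O(k^{-1/2})$ rather than the required $O(k^{-1})$; no central-binomial sharpening or parity split can recover the missing factor of $\sqrt{k}$. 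The weight $(1-\xi^2)^{1/4}$ sits strictly between the two natural Sonin envelopes, and the sharp constant there is obtained in Szeg\H{o}'s \S7.3 through the Laplace integral representation (equivalently the Liouville--Stekloff expansion), not through an energy-monotonicity identity.
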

\begin{proof}
These are classical results, with (\ref{eqn:JacBoundChebNat}) following from Theorem 7.32.1 of~\cite{Szego}. We note that a direct application of these theorems does require normalizing the polynomials to be orthonormal. Similarly, (\ref{eqn:JacBoundChebTrans}) follows from Theorem 7.3.3 of~\cite{Szego} and is a direct restatement of Lemma 5.1 of~\cite{RauhutWard}.
\end{proof}

\subsection{Proof of Theorems~\ref{thm:NatSampleCoherenceLeg} and~\ref{thm:TransformedSamplesLeg}}
\label{subsec:JacProofNatSamp}
To show Theorem~\ref{thm:NatSampleCoherenceLeg}, we note that when $p\le d$, (\ref{eqn:LegNatBound}) follows from
\begin{align*}
\mu(\bm{Y})&\le\mathop{\max}\limits_{\|\bm{k}\|_1\le p}\mathop{\prod}\limits_{i=1}^d\|\psi_{k_i}\|^2_{\infty}\\
&\le 3^p \le \exp(2p),
\end{align*}
where we note that at most $p$ of the $d$ dimensions can be non-constant polynomials. Similarly, when $p>d$, 
\begin{align*}
\mu(\bm{Y})&\le\mathop{\max}\limits_{\|\bm{k}\|_1\le p}\mathop{\prod}\limits_{i=1}^d\|\psi_{k_i}\|^2_{\infty}\\
&\le \left(\frac{2p}{d}+1\right)^{d}\\
&\le \exp(2p),
\end{align*}
where the third bound is loose for small $d$.

To show (\ref{eqn:LegTransformedSample}), note that (\ref{eqn:JacBoundChebTrans}) implies that when sampling from the Chebyshev distribution and independently of $p$
\begin{align*}
\mu(\bm{Y})&\le\mathop{\max}\limits_{\|\bm{k}\|_1\le p}\mathop{\prod}\limits_{i=1}^d\|\psi_{k_i}\|^2_{\infty}\\
&\le \mathop{\prod}\limits_{i=1}^d\frac{2k_i+1}{k_i}\le 3^d.
\end{align*}

\subsection{Proof of Theorem~\ref{thm:LowPTransformedSamples}}
\label{subsec:MCMCProof}
The proof of Theorem~\ref{thm:LowPTransformedSamples} follows from a similar logic to the other proofs, but is approachable in a more general measure theoretic setting. By the definition of $B(\bm{\xi})$ in (\ref{eqn:btspec}), we have for all $\bm{\xi}\in\mathcal{S}$ that
\begin{align*}
\mathop{\sup}\limits_{k=1:P}\frac{|\psi_{k}(\bm{\xi})|}{B(\bm{\xi})} = 1.
\end{align*}
This shows that sampling $\bm{Y}$ according to $B(\bm{\xi})$, gives a $\mu(\bm{Y})$ which is achieved uniformly over all values of $\bm{\xi}$. Let
\begin{align*}
c=\left(\int_{\mathcal{S}}f(\bm{\xi})B^2(\bm{\xi})d\bm{\xi}\right)^{-1/2};
\end{align*}
that is, $c^2$ normalizes $f(\bm{\xi})B^2(\bm{\xi})$ to a probability distribution on $\mathcal{S}$. Then for $i,j=1:P$,
\begin{align*}
\int_{\mathcal{S}}\frac{\psi_{i}(\bm{\xi})}{cB(\bm{\xi})}\frac{\psi_{j}(\bm{\xi})}{cB(\bm{\xi})}c^2f(\bm{\xi})B^2(\bm{\xi})d\bm{\xi} =\int_{\mathcal{S}}\psi_{i}(\bm{\xi})\psi_{j}(\bm{\xi})f(\bm{\xi})d\bm{\xi} \approx \delta_{i,j},
\end{align*}
and we assume that $\mathcal{S}$ is chosen so that the approximation holds within the satisfaction of requirements in (\ref{Eqn:CoherenceUnbounded}). As 
\begin{align}
\label{eqn:boundBt}
\mathop{\sup}\limits_{k=1:P}\frac{|\psi_{k}(\bm{\xi})|}{cB(\bm{\xi})} = c^{-1},
\end{align}
for all $\bm{\xi}\in\mathcal{S}$, it follows that the coherence parameter for the scheme associated with sampling from the distribution $c^2f(\bm{\xi})B^2(\bm{\xi})$ and $\bm{\xi}\in\mathcal{S}$ is $c^{-2}$.

We define the measure $\nu$ on Lebesgue measurable subsets of $\mathcal{S}$, denoted by $\mathcal{A}$, via
\begin{align*}
\nu(\mathcal{A}):=\int_{\mathcal{A}}f(\bm{\xi})d\lambda(\bm{\xi}),
\end{align*}
where $\lambda(\bm{\xi})$ is the Lebesgue measure, and $f$ is the distribution with respect to which the $\{\psi_{k}(\bm{\xi})\}$ are orthogonal. Let $\hat{B}$ be a function differing from $B$ on a set of non-zero $\nu$-measure, so that the sampling scheme corresponding to $\hat{B}$ differs on a set of non-zero $\nu$-measure. By (\ref{Eqn:CoherenceUnbounded}), no subset $\mathcal{S}_s$ of $\mathcal{S}$ with $\mu(\mathcal{S}_s)<\mu(\mathcal{S})$ satisfies the conditions of (\ref{Eqn:CoherenceUnbounded}), implying that $\hat{B}$ may not be infinite (corresponding to applying a weight of zero) on any set of positive measure and still satisfy these conditions. As (\ref{eqn:boundBt}) is achieved for all $\bm{\xi}\in\mathcal{S}$, it follows that for the sampling scheme associated with $\hat{B}$, there is a set $\mathcal{A}_{\star}$ with $\nu(\mathcal{A}_{\star})>0$ such that
\begin{align*}
\int_{\mathcal{A}_{\star}}\mathop{\sup}\limits_{k=1:P}\frac{|\psi_{k}(\bm{\xi})|}{\hat{c}\hat{B}(\bm{\xi})}d\lambda(\bm{\xi})> \lambda(\mathcal{A}_{\star})c^{-1},
\end{align*}
and it follows by the Mean Value Theorem for integrals that
\begin{align*}
\mathop{\sup}\limits_{\bm{\xi}\in\mathcal{A}_{\star}}\mathop{\sup}\limits_{k=1:P}\frac{|\psi_{k}(\bm{\xi})|}{\hat{c}\hat{B}(\bm{\xi})}>c^{-1}.
\end{align*}
This implies that,
\begin{align*}
\mathop{\sup}\limits_{\bm{\xi}\in\mathcal{S}}\mathop{\sup}\limits_{k=1:P}\frac{|\psi_{k}(\bm{\xi})|}{\hat{c}\hat{B}(\bm{\xi})}\ge\mathop{\sup}\limits_{\bm{\xi}\in\mathcal{A}_{\star}}\mathop{\sup}\limits_{k=1:P}\frac{|\psi_{k}(\bm{\xi})|}{\hat{c}\hat{B}(\bm{\xi})}>c^{-1}.
\end{align*}
It follows by the definition of $\mu(\bm{Y})$ given in (\ref{Eqn:CoherenceUnbounded}) for the selected $\mathcal{S}$, the coherence parameter $\mu(\bm{Y})$ for the sampling scheme associated with $\hat{B}$ is larger than $c^{-2}$.

\section{Conclusions}
\label{sec:conc}
We provided an analysis of Hermite and Legendre polynomials which allowed us to bound a coherence parameter and generate recovery guarantees for sparse polynomial chaos expansions obtained via $\ell_1$-minimization. We also identified alternative random sampling schemes which provide sharper guarantees and demonstrate improved polynomial chaos reconstructions relative to the random sampling from the orthogonality measure of these bases. These sampling methods were derived based on the properties of Hermite and Legendre polynomials. Furthermore, we showed a Markov Chain Monte Carlo method for generating samples that minimize the coherence parameter, thereby achieving an optimality for the number of random solution realizations. Such a sampling was referred to as coherence-optimal sampling. 

The sampling methods were compared on arbitrary manufactured stochastic functions, and the different sampling strategies were tested for identifying the solution of a 20-dimensional elliptic boundary value problem, where positive results were attained for the coherence-optimal sampling method. Similarly positive results were observed when computing the solution to a non-linear ordinary differential equation, where a high order Hermite polynomial chaos expansion was needed for an accurate solution approximation.

\section*{Acknowledgements}
\label{sec:acknow}

{\color{black}The authors would like to gratefully thank Prof. Akil Narayan (UMass Dartmouth) for his constructive feedback on this manuscript.}

This material is based upon work supported by the U.S. Department of Energy Office of Science, Office of Advanced Scientific Computing Research, under Award Number DE-SC0006402, as well as National Science Foundation under grants DMS-1228359 and CMMI-1201207.

\section*{\texorpdfstring{Appendix A: Proof of Lemma~\ref{lem:PolyBeh}}{Appendix: Proof of Lemma~\ref{lem:PolyBeh}}}
\label{sec:App}

\begin{proof} 
We may rewrite (\ref{eqn:ExpApproximation}) as
\begin{align}
\label{eqn:ExpApp2}
\eta_k(\xi)&=\frac{1}{2}-\frac{\sigma_k}{2\xi}-\frac{\log(C_k)}{\xi^2}-\frac{k}{2\xi^2}+\frac{k\log(\sigma_k +\xi)}{\xi^2}+ \frac{\log\left(\frac{1}{2}\left(1+\frac{\xi}{\sigma_k}\right)\right)}{2\xi^2}.
\end{align}
A straightforward, but lengthy algebraic substitution for $\xi$ yields that for any $\epsilon>0$, as $k\rightarrow\infty$,
\begin{align}
\label{eqn:exponentialConstantApproximation}
\eta_k(\sqrt{(2+\epsilon)k})=\frac{1}{2}-\frac{\log(2)}{2(2+\epsilon)}+o(1).
\end{align}

To show the second point of the Lemma note that $\sigma_k=\sqrt{\xi^2-2k}$ implies that $\partial \sigma_k/\partial \xi=\xi/\sigma_k$, and differentiating the expression (\ref{eqn:ExpApp2}) with respect to $\xi$ gives
\begin{align}
\label{eqn:ExpDerivative}
\frac{\partial\eta_k(\xi)}{\partial \xi} =&\frac{\sigma_k}{\xi^2}+\frac{2\log(C_k)}{\xi^3}+\frac{k}{\xi^3}+\frac{k}{\xi^2\sigma_k}\\
\nonumber
&-\left(\frac{1}{2\sigma_k}+\frac{2k\log(\sigma_k+\xi)}{\xi^3}+\frac{\log\left(\frac{1}{2}\left(1+\frac{\xi}{\sigma_k}\right)\right)}{\xi^3}+\frac{\xi^2-\sigma_k^2}{2\xi^2\sigma_k^2(\sigma_k+\xi)}\right).
\end{align}
Using that $\sigma_k^2=\xi^2-2k$, the above may be rewritten as
\begin{align*}
&\left(2\sigma_k^2\xi^3\right)\frac{\partial\eta_k(\xi)}{\partial \xi} = \xi^2\left(2k+4\log(C_k)-1\right)+\xi\sigma_k\\
&-\left(2\sigma_k^2\left(2k\log(\sigma_k+\xi) +\log\left(\frac{1}{2}\left(1+\frac{\xi}{\sigma_k}\right)\right)\right)+4k(k+2\log(C_k))\right).
\end{align*}
It follows that $\partial\eta_k(\xi)/\partial \xi<0$ whenever
\begin{align*}
2\sigma_k^2&\left(2k\log(\sigma_k+\xi) +\log\left(\frac{1}{2}\left(1+\frac{\xi}{\sigma_k}\right)\right)\right)+4k(k+2\log(C_k))\\
 &> \xi^2\left(2k+4\log(C_k)-1\right)+\xi\sigma_k.
\end{align*}
Substituting $\sigma_k^2$ for $\xi$ and $k$, gives that this condition is equivalent to
\begin{align*}
\frac{\xi^2}{2k}&\left(2k\log(\sigma_k+\xi) +\log\left(\frac{1}{2}\left(1+\frac{\xi}{\sigma_k}\right)\right)-k-2\log(C_k)+\frac{1}{2}-\frac{\sigma_k}{2\xi}\right)\\
 &>\left(2k\log(\sigma_k+\xi)+\log\left(\frac{1}{2}\left(1+\frac{\xi}{\sigma_k}\right)\right)-k-2\log(C_k)\right).
\end{align*}
From this form and using that $\sigma_k/\xi<1$, it follows that the derivative is negative for large enough $\xi$. More precisely, let
\begin{align*}
X_\xi &:= 2k\log(\sigma_k+\xi)+\log\left(\frac{1}{2}\left(1+\frac{\xi}{\sigma_k}\right)\right)-k-2\log(C_k);\\
Y_\xi &:= \frac{1}{2}-\frac{\sigma_k}{2\xi};\\
Z_\xi &:= \frac{\xi^2}{2k},
\end{align*}
where we wish to identify $\xi$ such that $Z_\xi(X_\xi+Y_\xi)>X_\xi$, which is equivalent to $Z_\xi Y_\xi>(1-Z_\xi)X_\xi$. Let $\epsilon_k\ge 0$, and note that if $\xi\ge\sqrt{(2+\epsilon_k)k+1}$ then $\sigma_k/\xi<1$ which implies that $Y_\xi>0$. Further, for $\xi\ge\sqrt{(2+\epsilon_k)k+1}$, it follows that $Z_\xi>1$. We now identify an $\epsilon_k$ such that for $\xi\ge\sqrt{(2+\epsilon_k)k+1}$, we verify that $X_\xi>0$, from which it follows that $Z_\xi Y_\xi>(1-Z_\xi)X_\xi$. Note that $\epsilon_k\ge 0$, implies that $\sigma_k \ge 1$ and as such for $\xi\ge\sqrt{(2+\epsilon_k)k+1}$,
\begin{align*}
X_\xi&\ge 2k\log\left(\sqrt{(2+\epsilon_k)k}\right)-k-2\log(C_k).
\end{align*}
Recalling that $C_k = \sqrt{2^kk!}$, we conclude from properties of the Log Gamma function~\cite{LogGammaAnalysisBook} that
\begin{align*}
\log(k!)&=(k+1)\log(k+1)-(k+1)-\frac{1}{2}\log\left(\frac{k}{2\pi}\right) + O(k^{-1});\\
2\log(C_k)&=k\log(2)+(k+1)\log(k+1)-(k+1)-\frac{1}{2}\log\left(\frac{k}{2\pi}\right) + O(k^{-1}).
\end{align*}
From this we may simplify terms, leading to a lower bound on $X_\xi$ for some $C>0$ given by
\begin{align*}
X_\xi&\ge k\log\left(\frac{(2+\epsilon_k)k}{2(k+1)}\right)+\left(1-\frac{\log(2\pi)}{2}\right) - \log\left(\frac{k+1}{\sqrt{k}}\right) -\frac{C}{k}.
\end{align*}
Noting that $1>\log(2\pi)/2$, it follows that we may guarantee that $X_\xi$ is positive for some sequence of $\epsilon_k>0$ which admits that $\epsilon_k\rightarrow 0$. It follows that we have a monotonic derivative for $\eta_k(\xi)$ with respect to $\xi$ for $\xi\ge\sqrt{(2+\epsilon_k)k}$, and thus conclude the second point of this Lemma.

To show the third point, we utilize a differential-difference equation~\cite{Szego} for orthonormal Hermite polynomials,
\begin{align}
\label{eqn:HermDiff}
\sqrt{2(k+1)}\psi_{k+1}(\xi)&=2\xi\psi_k(\xi)-\psi_k^{\prime}(\xi).
\end{align}
We note here that the approximation in~\cite{Hermite1} giving the approximation of (\ref{eqn:ExpApproximation}) for $\psi_k$ extends to the derivative $\psi_k^{\prime}$ so that for sufficiently large $k$ the approximation to $\psi_k$ in (\ref{eqn:ExpApproximation}) is arbitrarily accurate for $\psi_k$, and when differentiated to $\psi^{\prime}_k$. Differentiating with the use of the chain rule gives that
\begin{align}
\label{eqn:HermChain}
\frac{\partial}{\partial \xi} \exp(\eta_k(\xi) \xi^2) = \exp(\eta_k(\xi) \xi^2)\left(\xi^2\frac{\partial \eta_k(\xi)}{\partial \xi} + 2\xi\eta_k(\xi)\right).
\end{align}
Plugging (\ref{eqn:ExpApp2}) and (\ref{eqn:ExpDerivative}) into (\ref{eqn:HermChain}), and in turn into (\ref{eqn:HermDiff}) we have that
\begin{align*}
\psi_{k+1}(\xi)&\approx \frac{2\xi\exp(\eta_k \xi^2)-\frac{\partial}{\partial \xi} \exp(\eta_k \xi^2)}{\sqrt{2(k+1)}},\\
& = \frac{\exp(\eta_k \xi^2)}{\sqrt{2(k+1)}}\left(\xi\left(1+\frac{\xi}{2\sigma_k}\right)+\frac{\xi^2-\sigma_k^2}{2\sigma_k^2(\sigma_k+\xi)}\right).
\end{align*}
It follows by (\ref{eqn:ExpApproximation}) that
\begin{align}
\nonumber
\frac{\psi_{k+1}(\xi)}{\psi_{k}(\xi)}&\approx\frac{\exp(\eta_{k+1}(\xi)\xi^2)}{\exp(\eta_k(\xi)\xi^2)},\\
\label{eqn:HermiteRatio}
&\approx\frac{1}{\sqrt{2(k+1)}}\left(\xi\left(1+\frac{\xi}{2\sigma_k}\right)+\frac{\xi^2-\sigma_k^2}{2\sigma_k^2(\sigma_k+\xi)}\right),
\end{align}
Letting $k$ go to infinity it follows that for some $\epsilon_k\rightarrow 0$ and $|\xi|>\sqrt{(2+\epsilon_{k+1})(k+1)+1}$, both the function and derivative approximations considered of $\exp(\eta_k(\xi)\xi^2)$ to $\psi_k(\xi)$ and $\partial \exp(\eta_k(\xi)\xi^2)/\partial$ to $\psi^{\prime}_k(\xi)$ are arbitrarily accurate~\cite{Hermite1}. If the right-hand side of (\ref{eqn:HermiteRatio}) is larger than 1, and $k$ is large enough so that the approximation is sufficiently accurate for $|\xi|=\sqrt{(2+\epsilon_{k+1})(k+1)+1}$, then we will show that (\ref{eqn:HermiteRatio}) implies that $\eta_{k+1}(\xi)>\eta_k(\xi)$.

We note that larger $\epsilon_k,\epsilon_{k+1}$ complicate the proof, but small enough $\epsilon_k,\epsilon_{k+1}$ do not affect the comparisons made, and thus for brevity of presentation we set $\epsilon_k=\epsilon_{k+1}=0$ for the remainder of this proof. Note that the ratio $\psi_{k+1}(\xi)/\psi_{k}(\xi)$ is monotonically increasing for $|\xi|>\sqrt{2(k+1)+1}$ as $\partial \sigma_k/\partial \xi < 1$, and so it suffices to check when $|\xi|=\sqrt{2(k+1)+1},$ and thus $\sigma_k=\sqrt{3}$. In this case the ratio in (\ref{eqn:HermiteRatio}) satisfies
\begin{align}
\label{eqn:HermGap}
\sqrt{\frac{2k+3}{2k+2}}+\sqrt{\frac{3}{2(k+1)}}+\frac{k/3}{\sqrt{2k+2}(\sqrt{2k+3}+\sqrt{3})}+\frac{k\sqrt{2}/\sqrt{3}}{\sqrt{k+1}}&\approx\frac{\psi_{k+1}(\xi)}{\psi_{k}(\xi)},\\
&>1,
\end{align}
as the first term is always larger than $1$ and all terms are positive, with the last term increasing in $k$. It follows that the lemma is established when both $k_1$ and $k_0$ are larger than some $K$ which insures that both the approximation in (\ref{eqn:ExpApproximation}) is sufficiently accurate and that the gap in (\ref{eqn:HermGap}) is sufficiently large.

For $k_0<K$ note that $\sigma_{k_0}(\sqrt{2k_1+1})=\sqrt{2(k_1-k_0)+1}$ satisfies 
\begin{align*}
\frac{\sigma_{k_0}(\sqrt{2k_1+1})}{\sqrt{2k_1+1}}&=\frac{\sqrt{2(k_1-k_0)+1}}{\sqrt{2k_1+1}}\rightarrow 1,
\end{align*}
as $k_1/k_0 \rightarrow \infty$, while $\sigma_{k_1}(\sqrt{2k_1+1})=1$ remains fixed. From the expression for $\eta_k(\xi)$ in (\ref{eqn:ExpApp2}) it follows for any $k_0$, a large enough $K$, and $k_1\ge K$ that $\eta_{k_0}(\sqrt{2k_1+1})<\eta_{k_1}(\sqrt{2k_1+1})$, showing the third point of this Lemma.
\end{proof}

\bibliographystyle{elsarticle-num}
\bibliography{CoherencePolyCitations}
\end{document}